\newcommand{\chf}[1]{\mathbf{1}_{#1}}
\newtheorem{theorem}{Theorem}[section]
\newtheorem{corollary}[theorem]{Corollary}
\newtheorem{proposition}[theorem]{Proposition}
\newtheorem{definition}[theorem]{Definition}
\newtheorem{remark}[theorem]{Remark}
\newtheorem{example}[theorem]{Example}
\newtheorem{assumption}[theorem]{Assumption}
\newcommand{\norm}[1]{\left\Vert#1\right\Vert}
\newcommand{\Semi}{{\normalfont Arc} }
\theoremstyle{nonumberplain}
\theoremstyle{empty}
\numberwithin{equation}{section} 
\def\A{\text{\rm vN}(G(\HH_\R))}
\def\C{{\mathbb C}}
\def\R{{\mathbb R}}
\def\N{{\mathbb N}}
\def\I{{s}}
\def\ProdQ{{{q}^{\Cr(\pi|_{{[n]}})}}{{t}^{\Cov(\pi|_{[n]})}}{{\v}^{\Cr(\pi|_{{[\bar n]}})}}{{\w}^{\Cov(\pi|_{[\bar n]})}}}
\def\ri{{\rm i}}
\def\B{{\bf A}}
\def\G{G}
\def\l{r}
\def\S{\mathfrak{S}}
\def\F{\mathcal{F}_{\rm dig}(\HH)}
\def\FQ{\mathcal{F}_{\rm dig}^{\rm  \S}(\HH)}
\def\id{I}
\def\m{\kappa_{\q}}
\def\qMP{{\mu}}
\def\qqMP{{\rm MP}}
\def\P{\mathcal{P}}
\def\PS{\mathcal{PS}}
\def\NC{\mathcal{NC}}
\def\lb{l}
\def\rb{r}
\def\PB{\mathcal{P}^{\O}}
\def\BL{{\mathbf{B}}}
\def\Cr{\text{\normalfont cr}}
\def\InS{\text{\normalfont CS}}
\def\SL{\text{\normalfont SR}}
\def\Pair{\text{\normalfont Pair}}
\def\MPair{{\mathcal{D}_{block}}}
\def\MPairPair{{\mathcal{D}_{block}^{\PS}}}
\def\Sing{\text{\normalfont Sing}}
\renewcommand{\epsilon}{\varepsilon}
\def\Cov{\text{\normalfont nest}}
\def\H{{\bf \mathcal H}_{ n}}
\def\HH{{\bf \mathcal H}}
\def\O{\otimes}
\def\W{{\S}_{\pm n}}
\def\v{v}
\def\w{w}
\def\xeta{ {  \vec{ \eta}}}
\def\yeta{ {  \vec{ \eta}}}
\def\x{  \vec{ \xi}_{n}}
\def\y  { \vec{ \xi}_{\bar n}}
\def\q{q,t,\v,\w}
\def\qq{q,t,\v,\w}
\def\xx{\xi\O \eta } 
\def\nawiasl{{\big \langle }}
\def\nawiasp{ \rangle_{ \HH }}
\def\state{\varphi }
\def\T{ {\bar {T}}}
\def\TG{{ T\O\T}}
\def\1{1}
\def\nest{\text{\normalfont rnest}}
\def\X{ X}
\def\Cum { R^{\xi,T}(}
\def\OptT{\mb{T}^{{\xi}}}
\def\OptTTyl{\mb{\widehat{T}}^{{\xi}}}
\def\Cumm {{\mathrm K}^\mb{\xi,T}_{\pi}}
\def\CummTensor {\widehat{\mathrm{K}}^\mb{\xi,T}_{\pi}}
\newcommand{\BC}{S_\primes, \bar K_\primes }
\newcommand{\ith}{( s_{p_1},k_{p_2}) }
\newcommand{\Tens}{\OptTTyl} 
\DeclareMathOperator{\Part}{\mathcal{P}}
\DeclareMathOperator{\Rtr}{\mathrm{LH}}
\newcommand{\primes}{E}
\newcommand{\set}[1]{\left\{#1\right\}}
\newcommand{\mf}[1]{\mathbb{#1}}
\newcommand{\mc}[1]{\mathcal{#1}}
\newcommand{\mb}[1]{\mathbf{#1}}
\newcommand{\abs}[1]{\left\vert#1\right\vert}
\newcommand\Matching[4]{%
  \begin{tikzpicture}[scale=0.5]
    \foreach \x in {1,...,#1}{
       \draw[circle,fill] (\x,0)circle[radius=1mm]node[below]{$\x$};
    }
    \foreach \x/\y in {#2} {
       \pgfmathsetmacro{\Radius}{\y/2-\x/2}
       \draw(\x,0) arc[radius=\Radius, start angle=180, end angle=0];
    }
  \end{tikzpicture}%
    \begin{tikzpicture}[scale=0.5]
    \foreach \x in {1,...,#3}{
       \draw[circle,fill] (\x,0)circle[radius=1mm]node[below]{ $\bar{\x}$};
    }
    \foreach \x/\y in {#4} {
       \pgfmathsetmacro{\Radius}{\y/2-\x/2}
       \draw(\x,0) arc[radius=\Radius, start angle=180, end angle=0];
    }
  \end{tikzpicture}%
}
\newcommand\MatchingE[4]{%
  \begin{tikzpicture}[scale=0.5]
    \foreach \x in {1,...,#1}{
       \draw[circle,fill] (\x,0)circle[radius=1mm]node[below]{};
    }
    \foreach \x/\y in {#2} {
       \pgfmathsetmacro{\Radius}{\y/2-\x/2}
       \draw(\x,0) arc[radius=\Radius, start angle=180, end angle=0];
         \node[color=red] at (1,-0.71) {$1$};
            \node[color=red] at (4,-0.71) {$4$};
              \node[color=blue] at (2,-0.71) {$2$};
            \node[color=blue] at (6,-0.71) {$6$};
             \node[color=green] at (3,-0.71) {$3$};
            \node[color=green] at (5,-0.71) {$5$};
    }
  \end{tikzpicture}%
    \begin{tikzpicture}[scale=0.5]
    \foreach \x in {1,...,#3}{
       \draw[circle,fill] (\x,0)circle[radius=1mm]node[below]{ };
    }
    \foreach \x/\y in {#4} {
       \pgfmathsetmacro{\Radius}{\y/2-\x/2}
       \draw(\x,0) arc[radius=\Radius, start angle=180, end angle=0];
      \node[color=red] at (1,-0.721) {$\bar 1$};
            \node[color=red] at (6,-0.721) {$\bar 6$};
             \node[color=blue] at (2,-0.721) {$\bar 2$};
            \node[color=blue] at (5,-0.721) {$\bar 5$};
              \node[color=green] at (3,-0.721) {$\bar 3$};
            \node[color=green] at (4,-0.721) {$\bar 4$};
    }
  \end{tikzpicture}%
}
\newcommand\MatchingEE[4]{%
  \begin{tikzpicture}[scale=0.5]
    \foreach \x in {1,...,#1}{
       \draw[circle,fill] (\x,0)circle[radius=1mm]node[below]{};
    }
    \foreach \x/\y in {#2} {
       \pgfmathsetmacro{\Radius}{\y/2-\x/2}
       \draw(\x,0) arc[radius=\Radius, start angle=180, end angle=0];
         \node[color=red] at (1,-0.71) {$1$};
            \node[color=red] at (6,-0.71) {$6$};
              \node[color=blue] at (2,-0.71) {$2$};
            \node[color=blue] at (5,-0.71) {$5$};
             \node[color=green] at (3,-0.71) {$3$};
            \node[color=green] at (4,-0.71) {$4$};
    }
  \end{tikzpicture}%
    \begin{tikzpicture}[scale=0.5]
    \foreach \x in {1,...,#3}{
       \draw[circle,fill] (\x,0)circle[radius=1mm]node[below]{ };
    }
    \foreach \x/\y in {#4} {
       \pgfmathsetmacro{\Radius}{\y/2-\x/2}
       \draw(\x,0) arc[radius=\Radius, start angle=180, end angle=0];
      \node[color=red] at (1,-0.721) {$\bar 1$};
            \node[color=red] at (6,-0.721) {$\bar 6$};
             \node[color=blue] at (2,-0.721) {$\bar 2$};
            \node[color=blue] at (5,-0.721) {$\bar 5$};
              \node[color=green] at (3,-0.721) {$\bar 3$};
            \node[color=green] at (4,-0.721) {$\bar 4$};
    }
  \end{tikzpicture}%
}
\newcommand\MatchingEEE[4]{%
  \begin{tikzpicture}[scale=0.5]
    \foreach \x in {1,...,#1}{
       \draw[circle,fill] (\x,0)circle[radius=1mm]node[below]{};
    }
    \foreach \x/\y in {#2} {
       \pgfmathsetmacro{\Radius}{\y/2-\x/2}
       \draw(\x,0) arc[radius=\Radius, start angle=180, end angle=0];
         \node[color=red] at (1,-0.71) {$1$};
            \node[color=red] at (3,-0.71) {$3$};
              \node[color=red] at (6,-0.71) {$6$};
            \node[color=blue] at (2,-0.71) {$2$};
             \node[color=blue] at (4,-0.71) {$4$};
            \node[color=blue] at (5,-0.71) {$5$};
    }
  \end{tikzpicture}%
    \begin{tikzpicture}[scale=0.5]
    \foreach \x in {1,...,#3}{
       \draw[circle,fill] (\x,0)circle[radius=1mm]node[below]{ };
    }
    \foreach \x/\y in {#4} {
       \pgfmathsetmacro{\Radius}{\y/2-\x/2}
       \draw(\x,0) arc[radius=\Radius, start angle=180, end angle=0];
      \node[color=red] at (1,-0.721) {$\bar 1$};
            \node[color=red] at (6,-0.721) {$\bar 6$};
             \node[color=red] at (4,-0.721) {$\bar 4$};
            \node[color=blue] at (2,-0.721) {$\bar 2$};
              \node[color=blue] at (3,-0.721) {$\bar 3$};
            \node[color=blue] at (5,-0.721) {$\bar 5$};
    }
  \end{tikzpicture}%
}
\newcommand\MatchingEEEE[4]{%
  \begin{tikzpicture}[scale=0.5]
    \foreach \x in {1,...,#1}{
       \draw[circle,fill] (\x,0)circle[radius=1mm]node[below]{};
    }
    \foreach \x/\y in {#2} {
       \pgfmathsetmacro{\Radius}{\y/2-\x/2}
       \draw(\x,0) arc[radius=\Radius, start angle=180, end angle=0];
         \node[color=red] at (1,-0.71) {$1$};
            \node[color=red] at (3,-0.71) {$3$};
              \node[color=blue] at (6,-0.71) {$6$};
            \node[color=blue] at (2,-0.71) {$2$};
             \node[color=blue] at (4,-0.71) {$4$};
            \node[color=blue] at (5,-0.71) {$5$};
    }
  \end{tikzpicture}%
    \begin{tikzpicture}[scale=0.5]
    \foreach \x in {1,...,#3}{
       \draw[circle,fill] (\x,0)circle[radius=1mm]node[below]{ };
    }
    \foreach \x/\y in {#4} {
       \pgfmathsetmacro{\Radius}{\y/2-\x/2}
       \draw(\x,0) arc[radius=\Radius, start angle=180, end angle=0];
      \node[color=red] at (1,-0.721) {$\bar 1$};
            \node[color=red] at (6,-0.721) {$\bar 6$};
             \node[color=red] at (4,-0.721) {$\bar 4$};
            \node[color=blue] at (2,-0.721) {$\bar 2$};
              \node[color=blue] at (3,-0.721) {$\bar 3$};
            \node[color=red] at (5,-0.721) {$\bar 5$};
    }
  \end{tikzpicture}%
}
\newcommand\MatchingTrace[4]{%
  \begin{tikzpicture}[scale=0.5]
    \foreach \x in {1,...,#1}{
       \draw[circle,fill] (\x,0)circle[radius=1mm]node[below]{};
    }
    \foreach \x/\y in {#2} {
       \pgfmathsetmacro{\Radius}{\y/2-\x/2}
       \draw(\x,0) arc[radius=\Radius, start angle=180, end angle=0];
         \node at (1,-0.71) {$2$};
            \node at (2,-0.71) {$3$};
              \node at (3,-0.71) {$4$};
            \node at (4,-0.71) {$1$};
              \node at (-2,0.4) { $\xrightarrow{\text{trace action }}$};
    }
  \end{tikzpicture}%
    \begin{tikzpicture}[scale=0.5]
    \foreach \x in {1,...,#3}{
       \draw[circle,fill] (\x,0)circle[radius=1mm]node[below]{ };
    }
    \foreach \x/\y in {#4} {
       \pgfmathsetmacro{\Radius}{\y/2-\x/2}
       \draw(\x,0) arc[radius=\Radius, start angle=180, end angle=0];
      \node at (1,-0.721) {$\bar 2$};
            \node at (2,-0.721) {$\bar 3$};
             \node at (3,-0.721) {$\bar 4$};
            \node at (4,-0.721) {$\bar 1$};
    }
  \end{tikzpicture}%
}
\newcommand\MatchingMeanders[4]{%
  \begin{tikzpicture}[scale=0.5]
    \foreach \x in {1,...,#1}{
       \draw[circle,fill] (\x,0)circle[radius=1mm]node[below]{};
    }
    \foreach \x/\y in {#2} {
       \pgfmathsetmacro{\Radius}{\y/2-\x/2}
       \draw(\x,0) arc[radius=\Radius, start angle=180, end angle=0];
;

              \node at (-1,0.4) { $\longleftrightarrow$};
    }
    
         \foreach \x in {1,...,#3}{
       \draw[circle,fill] (\x,0)circle[radius=1mm]node[below]{};
    }
    \foreach \x/\y in {#4} {
       \pgfmathsetmacro{\Radius}{\y/2-\x/2}
       \draw(\x,0) arc[radius=\Radius, start angle=-180, end angle=0];
        ;}
  \end{tikzpicture}
}
\newcommand\MMatching[4]{%
  \begin{tikzpicture}[scale=0.5]
    \foreach \x in {1,...,#1}{
       \draw[circle,fill] (\x,0)circle[radius=1mm]node[below]{};
    }
    \foreach \x/\y in {#2} {
       \pgfmathsetmacro{\Radius}{\y/2-\x/2}
       \draw(\x,0) arc[radius=\Radius, start angle=180, end angle=0];
       \draw (1,-1.5) -- (#1,-1.5);
        \node at (#1/2+0.5,-2.2) {$[n]$};
        \node at (1,-1) {$1$};
        \node at (7.2,-1) {$s_{p_1}$};
          \node at (9.1,-1) {$s_{r}$};
    }
  \end{tikzpicture}
  \hspace{1cm}
    \begin{tikzpicture}[scale=0.5]
    \foreach \x in {1,...,#3}{
       \draw[circle,fill] (\x,0)circle[radius=1mm]node[below]{ };
    }
    \foreach \x/\y in {#4} {
       \pgfmathsetmacro{\Radius}{\y/2-\x/2}
       \draw(\x,0) arc[radius=\Radius, start angle=180, end angle=0];
            \draw (1,-1.5) -- (#3,-1.5);
             \node at (#1/2+0.5,-2.2) {$[\bar n]$};
                \node at (1,-1) {$\bar 1$};
        \node at (6.2,-1) {$k_{p_2}$};
         \node at (10.1,-1) {$k_{r}$};
    }
  \end{tikzpicture}%
}
\newcommand\MMMatching[8]{%
  \begin{tikzpicture}[scale=0.5]
    \foreach \x in {1,...,#1}{
       \draw[circle,fill] (\x,0)circle[radius=1mm]node[below]{$\x$};
    }
    \foreach \x/\y in {#2} {
       \pgfmathsetmacro{\Radius}{\y/2-\x/2}
       \draw(\x,0) arc[radius=\Radius, start angle=180, end angle=0];
        \node at (#1/2+0.5,2.2) {block};
    }
  \end{tikzpicture}%
  \hspace{1cm}
    \begin{tikzpicture}[scale=0.5]
    \foreach \x in {1,...,#3}{
       \draw[circle,fill] (\x,0)circle[radius=1mm]node[below]{ $\x$};
    }
    \foreach \x/\y in {#4} {
       \pgfmathsetmacro{\Radius}{\y/2-\x/2}
       \draw(\x,0) arc[radius=\Radius, start angle=180, end angle=0];
    }
  \end{tikzpicture}%
      \begin{tikzpicture}[scale=0.5]
    \foreach \x in {2,...,3}{
       \draw[circle,fill] (\x,0)circle[radius=1mm]node[below]{ $\x$};
    }
    \foreach \x/\y in {#6} {
       \pgfmathsetmacro{\Radius}{\y/2-\x/2}
       \draw(\x,0) arc[radius=\Radius, start angle=180, end angle=0];
             \node at (#1/2+0.5,2.2) {arcs};
    }
  \end{tikzpicture}%
       \begin{tikzpicture}[scale=0.5]
    \foreach \x in {3,...,4}{
       \draw[circle,fill] (\x,0)circle[radius=1mm]node[below]{ $\x$};
    }
    \foreach \x/\y in {#8} {
       \pgfmathsetmacro{\Radius}{\y/2-\x/2}
       \draw(\x,0) arc[radius=\Radius, start angle=180, end angle=0];
    }
  \end{tikzpicture}%
  \hspace{1cm}
    \begin{tikzpicture}[scale=0.5]
        \node at (0.5,4.7) {opener -- $1$};
          \node at (0.3,3.5) {closer -- $4$};
            \node at (1,2.2) {middle -- $2,$ $3$};
  \end{tikzpicture}%
  
}
\newcommand{\ip}[2]{\left \langle #1, #2 \right \rangle}
\DeclareMathOperator{\ID}{{\mathcal{ID}}_{\q}}
\DeclareMathOperator{\IF}{\mathcal{IF}}
\DeclareMathOperator{\Falg}{\mathcal{F}_{\mathrm{alg}}}
\newcommand{\rc}{\mathrm{rc} }
\newcommand{\ls}[1]{\mathrm{span} \left( #1 \right)}
\title[Fock space  associated with quadrabasic Hermite orthogonal polynomials]{Fock space associated with quadrabasic Hermite orthogonal polynomials}
\author[Wiktor Ejsmont]{Wiktor Ejsmont}
\address { 
Department of Telecommunications and Teleinformatics, Wroclaw University
of Science and Technology\\
Wybrze\.ze Wyspia\'nskiego 27, 50-370 Wroc\l aw, Poland}
\email{wiktor.ejsmont@gmail.com}
\subjclass[2000]{Primary  46L53, 46L54; Secondary 47N30}
\thanks{Supported  by the Narodowe Centrum Nauki grant N$^{\textrm{o}}$ 2018/29/B/HS4/01420}
\begin{document}
\maketitle
\begin{abstract}
This paper introduces a new idea for constructing operators
associated with a certain class of probability measures. 
  Special cases include several know classical and noncommutative probability. 
 The main example is derived from 
 Feller \cite[Page 503, Example 10]{Feller1971}, i.e. the hyperbolic secant distribution. 
In probability theory and statistics, the hyperbolic secant distribution is a continuous probability distribution whose probability density function and characteristic function are proportional to the hyperbolic secant function.  
\end{abstract}
\section{Introduction}
The study of $q$-Gaussian  distributions \cite{BozejkoSpeicher1991} has been an active field of research during the last decade.  
 A noncommutative analog of a Brownian motion (or Gaussian process, more generally) is the
family of operators $(a_q^*(x)+a_q(x))_{x\in H}$. When equipped with the vacuum expectation state $\langle\Omega,\cdot\, \Omega\rangle_q $, the
$q$-Gaussian algebra yields a rich non-commutative probability space. For $q=1$ (corresponding to the Bose statistics) the operator $ a_1^*(x)+a_1(x)$ is 
the  standard Gaussian  random variable, i.e.\  its spectral measure relative to the vacuum state satisfies 
$$\langle (a_1^*(x)+a_1(x))^n \Omega,\Omega\rangle_1=\frac{1}{\sqrt{2\pi}}\int_{\mathbb{R}}t^n e^{-\frac{t^2}{2}}\,dt
$$
when $\|x\|=1$. 
 Moreover, $\{a_1^*(x)+a_1(x)\}_{x\in H}$ are commutative in the classical sense. 
The  case $q=-1$ corresponds to the Fermi statistics. It should be stressed that, for $q\neq \pm 1$, the $q$-modification of the
(anti)symmetrization operator is a strictly positive operator. Therefore, unlike the
classical Bose and Fermi cases, there are no commutation relations between the creation
operators. For $q = 0$, the $q$-Fock
space recovers the full Fock space of Voiculescu's free probability \cite{VoiculescuDykemaNica92}. For $q = 0$, the $q$-Gaussian random variables are distributed according to the semi-circle law 
$$
\langle (a_0^*(x)+a_0(x))^n \Omega,\Omega\rangle_0=\frac{1}{2\pi}\int_{-2}^2 t^n\sqrt{4-t^2}\, dt 
$$
when $\|x\|=1$. 

 The study of the noncommutative Brownian motion $\{a_q^\ast(x)+a_q(x)\}_{x\in H}$ was initiated in \cite{BozejkoSpeicher1991,BozejkoSpeicher1994,BozejkoSpeicher1997}. For further generalizations of
a $q$-Brownian motion, see \cite{GutaMaassen2002,Blitvic2012,BozejkoEjsmontHasebe2015,BozejkoEjsmontHasebe2017}. 
 In particular, this setting
gives rise to $q$-deformed versions of the  stochastic calculus  \cite{Ans01,Bryc2001, Ans04,Donati-Martin,Deya2018}.

One of the most
beautiful and important results in this area was initiated by  Blitvi\'c \cite{Blitvic2012} where a second-parameter refinement of the $q$-Fock space,
formulated as a $(q,t)$-Fock space $\mathcal{F}_{q,t} (H )$ was introduced. It is constructed via a direct generalization of
Bo\.zejko and Speicher's framework \cite{BozejkoSpeicher1991}, yielding the $q$-Fock space when $t = 1$.   These are
the defining relations of the Chakrabarti-Jagannathan deformed quantum oscillator
algebra; see  \cite{Blitvic2012} and references therein for more details. The moments of
the deformed Gaussian process $\{a_{q,t}(x) +a_{q,t}^\ast(x)\}_{x\in H}$ are encoded by the joint statistics of crossings
and nestings in pair partitions. In particular, it is shown that the distribution of a single Gaussian operator orthogonalizes the $(q, t)$-Hermite polynomials.

The goal of this paper is to introduce \emph{quadrabasic Fock space}. Our approach is to replace the  permutation group  by the tensor product of two permutation groups.
The orthogonal polynomials of a Gaussian type  arising in the present framework satisfying the recurrence relation
\begin{align} \label{recursion}
&x Q_n^{(\q)}(x) = Q_{n+1}^{(\q)}(x) +[n]_{q,t}[n]_{\v,\w}Q_{n-1}^{(\q)}(x), \qquad 
n=0,1,2,\dots
\end{align}
where $Q_{-1}^{(\q)}(x)=0,Q_0^{(\q)}(x)=1$, $\abs{q}\leq t\leq 1$, $ 
\abs{\v}\leq \w\leq 1$ and 
 $[n]_{q,t}$ is the $q,t$-number 
$$
[n]_{q,t}:= t^{n-1}+qt^{n-2}+\cdots+q^{n-2}t+q^{n-1},\text{ and } [n]_q:=[n]_{q,1} \qquad n \geq1. 
$$
 We  call  these polynomials  \emph{quadrabasic Hermite orthogonal polynomials}, because they depend on four parameters 
 and it is a natural extension of   $q$ or $(q,t)$-Hermite orthogonal
polynomials. 
Considering families built around more general hypergeometric functions, the quadrabasic Hermite sequence belongs to the octabasic Laguerre family (or its symmetric version) introduced by Simion and Stanton \cite{Simion1996} and recently extended by Blitvi\'c and Steingr\'{\i}msson \cite{BlitvicStein} or Sokal and Zeng \cite{Sokal} (see also the earlier work \cite{Randrianarivony}). 
This formula recovers  the hyperbolic secant case when $q=t=\v=\w=1$. 
 The hyperbolic secant function is
equivalent to the reciprocal hyperbolic cosine, and thus this distribution is also called the \emph{hyperbolic cosine distribution}. 
This measure 
orthogonalizes a special class of Meixner-Pollaczek polynomials 
which satisfy the recurrence relation
\begin{align}\label{recursionhiperbolic}
x{Q}_n(x) = {Q}_{n+1}(x) +n^2{Q}_{n-1}(x), \qquad n=0,1,2,\dots
\end{align}
with initial conditions ${Q}_{-1}(x)=0$ and ${Q}_0(x)=1$. 
To the best of our knowledge, the relation \eqref{recursionhiperbolic}  appears for the first time in the literature in \cite[eq. (5.4)]{Meixner1934}. In literature it is also possible to find a relatively large number of
works addressing the relation \eqref{recursionhiperbolic} for example   
\cite[eq. (4.7), with rescaling $Q_n(x)=n! A_n ((x-1)/2)$ ]{Carlitz1959};
 it was shown that these polynomials satisfy a symbolic orthogonality relation with respect to the Euler numbers. 
 Moreover, we investigate this construction in the context of a Poisson-type operator and apply this idea  to introduce a new class of noncommutative  L\'evy processes.

The plan of the paper is following: first we present definitions of the $(q,t)$-Fock space and corresponding creation, annihilation and  gauge operators. 
By using this  we present the definition of  quadrabasic Fock space  and the
creation and annihilation operators acting on it.
In Section 
3 we present a new type of partitions and the relevant statistics.
Next, in Sections 3 and 4    we introduce the generalized Gaussian process and gauge operators and
some of their natural properties, including norm estimates and the self-adjointness.
In these two sections we mainly study  an explicit Wick formula for the
mixed moments. Finally, in Section 5 we  apply this technique for
constructing new L\'evy processes, which  allows us to define a $(\q)$-convolution for a large
class of probability measures. 

\section{Preliminaries and Quadrabasic  Fock space}

\subsection{The Blitvi\'c $(q,t)$-Fock space \cite{BozejkoYosida2006, Bozejko2007, Blitvic2012}} \label{sec:qausian}
Let $H_{\R}$  be a separable real Hilbert space and let $H$ be its complexification with inner product $\langle\cdot,\cdot\rangle$ linear on the right component and anti-linear on the left. 
When considering elements in $H_{\R}$, it holds true that $\langle x,y\rangle=\langle y,x\rangle$. Let $\Falg(H)$ be its algebraic full Fock space, $\Falg(H) = \bigoplus_{n=0}^\infty H^{\otimes n}$, where $H^{\otimes 0} = \mf{C} \Omega$ and $\Omega$ is the vacuum vector. For each $n \geq 0$, define the operator $P_n$ on $H^{\otimes n}$ by 
\begin{align*}
& P_{q,t}^{(0)} (\Omega) = \Omega, \\
& P_{q,t}^{(n)}(\eta_1 \otimes \eta_2 \otimes \ldots \otimes \eta_n) = \sum_{\sigma \in \S_n} q^{l_1(\sigma)} t^{l_2(\sigma)}\eta_{\sigma(1)} \otimes \eta_{\sigma(2)} \otimes \ldots \otimes \eta_{\sigma(n)}, \text{ for }   q,t\in [-1,1]\text{, }\abs{q}\leq t,
\end{align*}
where $\S_n$ is the group of permutations of $1,2,\dots,n$ elements, and $l_1(\sigma)$ (inversions) is the minimal number of $\sigma_i$
, $1 \leq i \leq n - 1$, appearing in $\sigma$ and $l_2(\sigma)$  is  ${{n}\choose{2}}-l_1(\sigma)$ (co-inversions). Remember that  the
symmetric group   is generated by $ \sigma_i =(i,i+1)$, $i=1,\dots,n-1$, which satisfy the generalized braid relations $\sigma_i^2=e$,   $1\leq i < n-1$ and $(\sigma_i \sigma_j)^2=e$ if $|i-j|\geq2, 0\leq i,j\leq n-1$. 
 For $q=t=0$ each $P_{q,t}^{(n)} = \id$. For $q=t=1$, $P_{q,t}^{(n)} = n!\ \times$ the projection onto the subspace of symmetric tensors. For $q = -1$, $t=1$, $P_{q,t}^{(n)} = n!\ \times$ the projection onto the subspace of anti-symmetric tensors.

Define the $(q,t)$-deformed inner product on $\Falg(H)$ by the rule that for $\zeta \in H^{\otimes k}$, $\eta \in  H^{\otimes n}$,
\[
\ip{\zeta}{\eta}_{q,t} := \delta_{nk}\langle {\zeta},{P_{q,t}^{(n)} \eta}\rangle,
\]
where the inner product on the right-hand-side is the usual inner product induced on $H^{\otimes n}$. All inner products are linear in the second variable. It is a result of \cite{Blitvic2012} that the inner product $\ip{\cdot}{\cdot}_{q,t}$ is positive definite for $q,t \in (-1, 1)$ and  $|q| < t$, while for $\abs{q}=t$  it is positive semi-definite. Let $\mc{F}_{q,t}(H)$ be the completion of $\Falg(H)$ with respect to the norm corresponding to $\ip{\cdot}{\cdot}_{q,t}$. For $\abs{q}=t$ one first needs to quotient out by the vectors of norm $0$ and then complete; the result is the anti-symmetric, respectively, symmetric Fock space, with the inner product multiplied by $n!$ on the $n$-particle space. 
For $\xi$ in $H_\R$, define the (left) creation and annihilation operators on $\Falg(H)$ by, respectively,
\begin{align*}
& a_{q,t}^\ast(\xi) \Omega = \xi, \\
& a_{q,t}^\ast(\xi) \eta_1 \otimes \eta_2 \otimes \ldots \otimes \eta_n = \xi \otimes \eta_1 \otimes \eta_2 \otimes \ldots \otimes \eta_n,
\intertext{and}
& a_{q,t}(\xi) \Omega = 0, \\
& a_{q,t}(\xi)  \eta = \ip{\xi}{\eta} \Omega, \\
& a_{q,t}(\xi)  \eta_1 \otimes \eta_2 \otimes \ldots \otimes \eta_n = \sum_{i=1}^n q^{i-1} t^{n-i}\ip{\xi}{\eta_i} \eta_1 \otimes \ldots \otimes \hat{\eta}_i \otimes \ldots \otimes \eta_n,
\end{align*}
where as usually $\hat{\eta}_i$ means omitting the $i$-th term. 
They satisfy the commutation relations 
\begin{align}\label{commutationrelations}
 a_{q,t}(\xi) a_{q,t}^\ast(\eta) - q a_{q,t}^\ast(\eta) a_{q,t}(\xi) = \ip{\xi}{\eta} t^{N},
 \end{align}  where $t^{N}$ is the operator on $\mc{F}_{q,t}(H)$ defined by the linear extension of $t^{N}\Omega =0$ and $t^N \eta_1  \otimes \ldots \otimes \eta_n=t^n \eta_1 \otimes \ldots \otimes \eta_n $.

In the following range of parameters  \cite[Lemma 5]{Blitvic2012} the operators $a_{q,t}$ and $a_{q,t}^\ast$ extend to bounded linear operators (adjoints of each other) on $\mc{F}_{q,t}(H)$, with
the norm 
\begin{align}\label{normaoeratora}
\|a^\ast_{q,t}({\xi})\|_{q,t} = \left\{ \begin{array}{ll}
\|\xi\| &  -t\leq q \leq 0 <t\leq 1
\\
\frac{1}{\sqrt{1-q}}\|\xi\|  &  0<q<t=1\\
\sqrt{(n_\ast(t)+1)t^{n_\ast(t)}}\|\xi\|&  0< q = t< 1
\\\sqrt{\frac{t^{\hat{n}(q,t)+1}-q^{\hat{n}(q,t)+1}}{t-q}}\|\xi\|& 0< q < t< 1
\end{array} \right.
\end{align}
where $\xi\neq 0$, 
 $n_\ast(t):=\left \lfloor{t/(1-t)}\right \rfloor $ and $\hat{n}(q,t):=\left \lfloor{\frac{\log(1-t)-\log(1-q)}{\log{t}-\log{q}}}\right \rfloor$.

  For $q=\pm 1$ and $t=1$ we first need to compress the operators by the projection onto the symmetric/anti-symmetric Fock space, respectively, and the resulting operators differ from the usual ones by $\sqrt{n}$, but satisfy the usual commutation relations (thanks to a different inner product). For $q=t=1$ the resulting operators are unbounded, but still adjoints of each other. 
\subsection{$(q,t)$-gauge operators} \label{seqqtgauge}
In this subsection we recall a differential second quantization operator  which is partially investigated in  \cite{Ejsmont1}. 
In order to simplify the computation we  recall the properties  the
symmetric group. 
 There is a natural embedding $\S(n-1)=\langle \sigma_1, \dots, \sigma_{n-2}\rangle \subset \S(n)=\langle \sigma_1, \dots, \sigma_{n-1}\rangle$, which allows us to  decompose the operator $$P^{(n)}_q=( I\otimes P^{(n-1)}_q)R^{(n)}_{q}=
 {R^{(n)}_{q}}^*(\id \otimes P^{(n-1)}_{q})\text{ ~on $H^{\otimes n}$},$$ where $R^{(n)}_{q} = 1+\sum_{k=1}^{n-1}q^{k}\sigma_{1}\cdots \sigma_{k}$  (see \cite{BozejkoSpeicher1994}). From this we have 
\begin{align}\label{decomposition}
\begin{split}
P^{(n)}_{q,t}&=t^{{n\choose 2}}P^{(n)}_{q/t}=t^{{n\choose 2}}( I\otimes P^{(n-1)}_{q/t})R^{(n)}_{q/t}=( I\otimes t^{{n-1 \choose 2}}P^{(n-1)}_{q/t})t^{n-1}R^{(n)}_{q/t} \\
&=( I\otimes P^{(n-1)}_{q,t})R^{(n)}_{q,t} ={R^{(n)}_{q,t}}^*(\id \otimes P^{(n-1)}_{q,t})
\end{split}
\end{align}
where  $R^{(n)}_{q,t} = t^{n-1}+\sum_{k=1}^{n-1}q^{k}t^{n-k-1}\sigma_{1}\cdots \sigma_{k} \text{ and } n\geq 1$.
Let us further observe that  $\|R^{(n)}_{q,t}\|_{0,0}\leq [n]_{\abs{q},t} $ and so 
\begin{align} \label{eq:ograniczenieopertora}
\|P_{ q,t}^{(n)}\|_{0,0}\leq  [n]_{\abs{q},t} \|P_{q,t}^{(n-1)}\otimes I\|_{0,0}\leq \prod_{i=1}^n [i]_{\abs{q},t} \leq  n!.
\end{align}
First, we introduce an operator which acts on $(q,t)$-Fock space as
\begin{align*}
& p_0(T) \Omega = 0, \\
& p_0(T) (\xi_1 \otimes \ldots \otimes \xi_n) = T(\xi_1) \otimes \ldots \otimes \xi_n,
\end{align*}
where $T$ is an operator on Hilbert space $H$ with dense domain ${D}$. 
The adjoint of this operator satisfies $\langle p_0(T)f | \zeta \rangle_{0,0}=\langle f | p_0(T^*)\zeta \rangle_{0,0}$, and allows us to define a gauge operator (preservation or differential second quantization). Let  $p_{T}^{(q,t)}:=p_0(T) R^{(n)}_{q,t}$. 
 Let us observe that directly from the generalized braid relations for $k\in [n-1]$, we have
$ 
 \sigma_{1}\cdots \sigma_{k}=\bigl(\begin{smallmatrix}
  1 & \cdots & k & \cdots & n \\
  k & \cdots & k+1 & \cdots  & n
 \end{smallmatrix}\bigr), 
$ 
which allows us to rewrite action of $p_{T}$ on $H^{\otimes n}$ as 
$$p_{T}^{(q,t)}(\xi_1 \otimes \ldots \otimes \xi_n) = \sum_{i=1}^n q^{i-1}t^{n-i}T(\xi_i) \otimes \xi_1\otimes  \ldots \otimes  \hat{\xi}_i\otimes \dots \otimes \xi_n.$$

\subsubsection{$p_{T}^{(q,t)}$ is symmetric operator on $\mc{F}_{q,t}(D)$ and  bounded for $q<1$}
First, we explain the
symmetry properties of  $p_{T}^{(q,t)}$.
Observe that $p_0(T^*) ( \id \otimes P^{(n-1)}_{q,t})= ( \id \otimes P^{(n-1)}_{q,t})p_0(T^*)$; indeed for $\xi_1 \otimes \xi_2 \otimes \ldots \otimes \xi_n \in {D}^{\otimes n}$, we have
\begin{align*}&p_0(T^*) (\id \otimes P^{(n-1)}_{q,t})(\xi_1 \otimes \ldots \otimes \xi_n)=
T^{\ast} (\xi_1) \otimes P^{(n-1)}_{q,t}(\xi_2 \otimes \ldots \otimes \xi_n)\\&= (\id \otimes P^{(n-1)}_{q,t})(T^*(\xi_1) \otimes \ldots \otimes \xi_n)=(\id \otimes P^{(n-1)}_{q,t})p_0(T^*) (\xi_1 \otimes \ldots \otimes \xi_n).
\end{align*}
Let us fix $n$, and $f,g \in {D}^{\otimes n}$, then
\begin{align*}
\langle {p^{(q,t)}_T f},{g}\rangle_{q,t} &=\langle p_T^{(q,t)} f, P^{(n)}_{q,t} g \rangle_{0,0} =\langle p_0(T) R^{(n)}_{q,t} f,  (\id \otimes P^{(n-1)}_{q,t}) R^{(n)}_{q,t}g \rangle_{0,0} \\
&= \langle R^{(n)}_{q,t} f, p_0(T^*) (\id \otimes P^{(n-1)}_{q,t}) R^{(n)}_{q,t}g \rangle_{0,0}
 = \langle R^{(n)}_{q,t} f, (\id \otimes  P^{(n-1)}_{q,t})p_0(T^*) R^{(n)}_{q,t}g \rangle_{0,0}
\intertext{by Equation \eqref{decomposition}, we have}
&=\langle  f,  {R^{(n)}_{q,t}}^*(\id \otimes P^{(n-1)}_{q,t})p_0(T^*) R^{(n)}_{q,t}g \rangle_{0,0}
= \langle{f},{p_{T^*}^{(q,t)}g}\rangle_{ q,t}.
\end{align*}

The following proposition is inspired by \cite[Proposition 1.9]{Ejsmont1}.
The proof is almost identical to that of \cite[Proposition 1.9]{Ejsmont1}, and it can be omitted (the only difference is estimate of norm from the end).
\begin{proposition} If  $T$ is a bounded operator on $H$ and $\abs q<1$, then $p_T^{(q,t)}$ is bounded in $\mathcal{F}_{q,t}(H)$.
\label{seqqtgauge}
\end{proposition}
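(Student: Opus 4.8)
The plan is to bound $p_T^{(q,t)}$ on each $n$-particle space $H^{\otimes n}$ uniformly in $n$ and then add up over $n$, since $p_T^{(q,t)}$ preserves the number of particles. On $H^{\otimes n}$ the operator factors as $p_0(T)R^{(n)}_{q,t}$; as $p_0(T)$ acts only on the first leg one has $\|p_0(T)\|_{0,0}\le\|T\|$, and together with $\|R^{(n)}_{q,t}\|_{0,0}\le[n]_{\abs{q},t}$ from \eqref{eq:ograniczenieopertora} this gives the cheap a priori estimate $\|p_T^{(q,t)}|_{H^{\otimes n}}\|_{0,0}\le\|T\|\,[n]_{\abs{q},t}$. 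The point of the hypothesis $\abs{q}<1$ is that this bound is \emph{uniform} in $n$: since $0\le t\le 1$,
\begin{equation*}
[n]_{\abs{q},t}=\sum_{k=0}^{n-1}\abs{q}^{k}\,t^{\,n-1-k}\le\sum_{k=0}^{n-1}\abs{q}^{k}\le\frac{1}{1-\abs{q}}.
\end{equation*}
This is the only place the assumption $\abs{q}<1$ is used.

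The obstacle is that this is a bound for the \emph{wrong} inner product: $\langle\cdot,\cdot\rangle_{0,0}$ and $\langle\cdot,\cdot\rangle_{q,t}$ on $H^{\otimes n}$ are not uniformly comparable (as $n\to\infty$ the operator $P^{(n)}_{q,t}$ has both unbounded norm and lowest eigenvalue tending to $0$), so one cannot pass from a $\|\cdot\|_{0,0}$-bound to a $\|\cdot\|_{q,t}$-bound directly. I would get around this by exploiting self-adjointness. The computation carried out just above the Proposition shows $\langle p_T^{(q,t)}f,g\rangle_{q,t}=\langle f,p_{T^\ast}^{(q,t)}g\rangle_{q,t}$ for $f,g\in H^{\otimes n}$; hence $S:=p_{T^\ast}^{(q,t)}p_T^{(q,t)}$ is symmetric on $(H^{\otimes n},\langle\cdot,\cdot\rangle_{q,t})$, maps $H^{\otimes n}$ into itself, and $\|p_T^{(q,t)}f\|_{q,t}^2=\langle Sf,f\rangle_{q,t}$. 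For a symmetric $S$, iterating the elementary inequality $\|Sf\|_{q,t}^2=\langle f,S^{2}f\rangle_{q,t}\le\|f\|_{q,t}\,\|S^{2}f\|_{q,t}$ yields, for every $k\ge1$,
\begin{equation*}
\|Sf\|_{q,t}\le\|f\|_{q,t}^{\,1-2^{-k}}\,\|S^{2^{k}}f\|_{q,t}^{\,2^{-k}} .
\end{equation*}
Now estimate the last factor crudely through the $0,0$-norm, using $\|g\|_{q,t}^2=\langle g,P^{(n)}_{q,t}g\rangle_{0,0}\le\|P^{(n)}_{q,t}\|_{0,0}\,\|g\|_{0,0}^2$ and $\|S^{2^{k}}f\|_{0,0}\le\|S\|_{0,0}^{2^{k}}\,\|f\|_{0,0}$, where $\|S\|_{0,0}\le(\|T\|\,[n]_{\abs{q},t})^{2}$. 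Letting $k\to\infty$, the harmless factors $\|P^{(n)}_{q,t}\|_{0,0}^{\,2^{-k-1}}$ and $\|f\|_{0,0}^{\,2^{-k}}$ tend to $1$, and one is left with $\|Sf\|_{q,t}\le\|S\|_{0,0}\,\|f\|_{q,t}\le(\|T\|\,[n]_{\abs{q},t})^{2}\,\|f\|_{q,t}$; therefore $\|p_T^{(q,t)}f\|_{q,t}^2=\langle Sf,f\rangle_{q,t}\le\|Sf\|_{q,t}\|f\|_{q,t}\le(\|T\|\,[n]_{\abs{q},t})^{2}\,\|f\|_{q,t}^{2}$.

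Combining the two paragraphs, $\|p_T^{(q,t)}|_{H^{\otimes n}}\|_{q,t}\le\|T\|\,[n]_{\abs{q},t}\le\|T\|/(1-\abs{q})$ for every $n$; since $p_T^{(q,t)}$ respects the grading $\mathcal{F}_{q,t}(H)=\bigoplus_{n}\overline{H^{\otimes n}}$, it extends from the algebraic Fock space to a bounded operator on $\mathcal{F}_{q,t}(H)$ with $\|p_T^{(q,t)}\|_{q,t}\le\|T\|/(1-\abs{q})$. In the boundary case $\abs{q}=t$ the form $\langle\cdot,\cdot\rangle_{q,t}$ is only positive semidefinite, but Cauchy–Schwarz and the whole argument remain valid verbatim (alternatively one first passes to the quotient by the null vectors, as in the construction of $\mathcal{F}_{q,t}(H)$). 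The single nontrivial step is the norm transfer of the second paragraph; everything hinges on the $\langle\cdot,\cdot\rangle_{q,t}$-symmetry of $p_{T^\ast}^{(q,t)}p_T^{(q,t)}$, which renders the incomparability of the two inner products irrelevant via the spectral-radius (power) trick.
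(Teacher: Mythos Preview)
Your proof is correct and follows the same overall architecture as the paper's: bound $\|p_T^{(q,t)}\|_{0,0}\le\|T\|\,[n]_{|q|,t}$ on each $H^{\otimes n}$, observe that this is uniform in $n$ when $|q|<1$, and then transfer this $(0,0)$-bound to a $(q,t)$-bound using the adjointness $\langle p_T^{(q,t)}f,g\rangle_{q,t}=\langle f,p_{T^\ast}^{(q,t)}g\rangle_{q,t}$.

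The one genuine difference is in the norm-transfer step. The paper shows $P_{q,t}S\ge 0$ (with $S=p_{T^\ast}^{(q,t)}p_T^{(q,t)}$), uses the trivial operator inequality $P_{q,t}SS^\ast P_{q,t}\le\|SS^\ast\|_{0,0}P_{q,t}^2$, and then invokes operator monotonicity of the square root (L\"owner--Heinz) to deduce $P_{q,t}S\le\|S\|_{0,0}P_{q,t}$, i.e.\ $\langle Sf,f\rangle_{q,t}\le\|S\|_{0,0}\|f\|_{q,t}^2$. You reach the same inequality by the spectral-radius/power trick: iterate Cauchy--Schwarz on the $(q,t)$-symmetric operator $S$ to obtain $\|Sf\|_{q,t}\le\|f\|_{q,t}^{1-2^{-k}}\|S^{2^k}f\|_{q,t}^{2^{-k}}$, estimate the last factor crudely in the $(0,0)$-norm, and let $k\to\infty$. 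Your route is perhaps a touch more elementary (only Cauchy--Schwarz, no operator-monotone functions), while the paper's route is shorter once L\"owner--Heinz is available. The paper also tracks the constant more carefully, optimizing $\sup_n[n]_{|q|,t}$ rather than using the blanket bound $1/(1-|q|)$, but that is cosmetic.
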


\subsection{
Diagonal full Fock space }\label{sec2.1}
Let $[n]$ be the set $\{1,\dots,n\}$. 
We also consider additional numbers $\bar{1},\bar{2},\dots,\bar{n}$ and define  $[\bar{n}]$ as the set $\{\bar{1},\dots,\bar{n}\}$. We associate these indices with natural ordering $\bar{1}<\dots <\bar{n}$. 
Our strategy
is to define the action  on  the Weyl group  $\W=\S_n\times \S_{\bar n} $ which permutes separately indices of positive and negative part, as in the example:
$
{2}\hspace{0.2cm} {1} \hspace{0.2cm}  {4} \hspace{0.2cm}  {5} \hspace{0.2cm}  {3} \hspace{0.2cm}\times \hspace{0.2cm}  {\bar 5}\hspace{0.2cm}  {\bar 2} \hspace{0.2cm} {\bar 1} \hspace{0.2cm}  {\bar 3} \hspace{0.2cm}  {\bar 4}  
$.
 
  Let $H$ and $\bar{H}$  be two separable  Hilbert spaces  with inner product $\langle\cdot,\cdot\rangle_{H}$ and $\langle\cdot,\cdot\rangle_{\bar{H}}$, respectively.
 We further assume that these Hilbert spaces  have real Hilbert subspaces, so that $H$ ($\bar{H}$) is the complexification of $H_\R$ ($\bar{H}_\R$). 
 

 Then the Hilbert space $\HH:=H\O \bar{H}$ is the complexification of its real subspace $\HH_\R:=H_\R\O \bar{H}_\R$, with inner product $\nawiasl\cdot,\cdot  \nawiasp = \langle\cdot,\cdot\rangle_{H}\langle\cdot,\cdot\rangle_{\bar{H}}$ (and so $\HH$ has a natural conjugation   defined on it).
We define $\H:=H^{\otimes n} \O {\bar{H}^{\otimes n}}$ and an action of $\W$ on it by $U_n(\sigma) \O  U_n(\gamma)$, where $$U_n(\sigma) x_1\otimes\dots \otimes  x_n=x_{\sigma(1)}\otimes\dots \otimes  x_{\sigma(n)} $$ is a unitary representation  of the symmetric group.
Moreover, 
 we
\begin{itemize}
 \item use the superscript $_{\bar i}$, in order to label vectors in $\bar H$ as  $\xi_{\bar i}\in \bar H$. This in particular  means that
there is no relation between vectors $\xi_{1}\in  H$ and $\xi_{\bar 1}\in \bar H$;
 \item denote the vectors  ${\xi_{1}\otimes \dots \otimes \xi_{n}}\in H^{\otimes n}$ and ${\xi_{\bar 1}\otimes \dots \otimes \xi_{\bar n}} \in \bar{H}^{\otimes n},
$ by $\x$ and $\y$, respectively.
\end{itemize} 
We introduce the  algebraic \emph{diagonal full Fock} space over $\HH$
\begin{equation}
\F:=\bigoplus_{n=0}^\infty \H= \bigoplus_{n=0}^\infty H^{\otimes n}\otimes \bar H^{\otimes n}.
\end{equation} 
with convention that $\HH^{\otimes 0}=\C \Omega \O\bar \Omega$ is a one-dimensional normed space along a unit vector $\Omega \O \bar \Omega $. Note that elements of $\F$ are finite linear combinations of the elements from $\HH^{\otimes n}, n\in \N\cup\{0\}$ and we do not take the completion. 
We define the following  inner product $\x\O\y\in \H,\text{ }\xeta_m\O\yeta_{\bar m} \in \HH^{\otimes m}$
\begin{align*}
\langle  \x\O\y,\xeta_m\O\yeta_{\bar m} \hspace{1pt} \rangle_{0,0,0,0}:=\delta_{n,m}\prod_{i=1}^n\langle \xi_i ,\eta_i\rangle_{H}\langle\xi_{\bar i}  , \eta_{\bar i}\rangle_{\bar{H}}.
 \end{align*}
 \begin{remark}
 We may think that the elements of $\F$ arise from the diagonal elements of full Fock space $(\bigoplus_{n=0}^\infty H^{\otimes n})\otimes (\bigoplus_{n=0}^\infty \bar H^{\otimes n})$, that is, it is the sum of the  boxed entries in the table
 \begin{center}
  \begin{tabular}{  c  c  c  c r }
   $\boxed{H^{\otimes 0}\otimes \bar H^{\otimes0}}$ & $H^{\otimes 0}\otimes \bar H^{\otimes 1}$ & $H^{\otimes 0}\otimes \bar H^{\otimes 2}$ & $H^{\otimes 0}\otimes \bar H^{\otimes 3}$ & $\dots$ \\
    $H^{\otimes 1}\otimes \bar H^{\otimes 0}$  & $\boxed{H^{\otimes 1}\otimes \bar H^{\otimes 1}}$  & $H^{\otimes 1}\otimes \bar H^{\otimes 2}$  & $H^{\otimes 1}\otimes \bar H^{\otimes 3}$ & $\dots$ \\
    $H^{\otimes 2}\otimes \bar H^{\otimes 0}$  & $H^{\otimes 2}\otimes \bar H^{\otimes 1}$  & $\boxed{H^{\otimes 2}\otimes \bar H^{\otimes 2}}$  & $H^{\otimes 2}\otimes \bar H^{\otimes 3}$ & $\dots$ \\
    $H^{\otimes 3}\otimes \bar H^{\otimes 0}$  & $H^{\otimes 3}\otimes \bar H^{\otimes 1}$  & $H^{\otimes 3}\otimes \bar H^{\otimes 2}$  & $\boxed{H^{\otimes 3}\otimes \bar H^{\otimes 3}}$& $\dots$  \\
     $\vdots$  & $\vdots$  & $ \vdots $ &  $\vdots$ & $\ddots$  
  \end{tabular}
\end{center}
It is why we called them the diagonal Fock space.
\end{remark}

\subsection{Quadrabasic Fock space creation and annihilation operators}
Now we deform the inner product on $\F$. 
For $ q,t,\v,\w \in[-1,1]$, $|q|\leq t$ and $|\v|\leq \w$   we define the $(\q)$-symmetrization operator on $\H$  
 \begin{align*}
&P_{\qq}^{(n)}:=P_{q,t}^{(n)}\O P_{\v,\w }^{(n)}
\\&P_{\qq}^{(0)}:= \id_{\HH^{\otimes 0}}. 
\end{align*}
Moreover, let  
$
P_{\qq}:=\bigoplus_{n=0}^\infty P_{\qq}^{(n)}.$
We equip $\F$ with the inner product  
by using the   deformed  operator: 
\begin{align*}
\langle  \x\O\y ,\xeta_m\O\yeta_{\bar m}\rangle_{\q}:&= \langle  \x\O\y,P_{\qq}^{(n)} \xeta_{ m}\O\yeta_{\bar m}\rangle_{0,0,0,0}\\&=\delta_{n,m}\langle  \x , \xeta_m \rangle_{q,t}\langle  \y , \yeta_{\bar m} \rangle_{\v,\w}.
\end{align*}
Remember that  a strictly positive operator means that it is positive and $\text{Ker}(P_{\qq}^{(n)})=\{0\}$, and directly from above definition and   information from Section \ref{sec:qausian}  we can state the following
\begin{proposition} The operator $P_{\qq}$
\begin{enumerate} 
\item [(a)] is positive for  $|q|\leq t $ and $|\v|\leq \w $;
\item [(b)] is strictly positive for  $|q|< t $ and $|\v|< \w $.
\end{enumerate}
\end{proposition}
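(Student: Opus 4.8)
The plan is to reduce the statement to the single‑parameter facts already recalled from \cite{Blitvic2012} in Section~\ref{sec:qausian}, using that $P_{\qq}^{(n)}$ is a tensor product (in the $\O$ sense) of two operators whose positivity properties are known. Concretely, from the definition
$P_{\qq}^{(n)} = P_{q,t}^{(n)} \O P_{\v,\w}^{(n)}$
and the corresponding factorisation of the inner product
$\langle \x\O\y,\xeta_m\O\yeta_{\bar m}\rangle_{\q} = \delta_{n,m}\langle \x,\xeta_m\rangle_{q,t}\langle \y,\yeta_{\bar m}\rangle_{\v,\w}$,
the claim is essentially that a tensor product of two positive (resp.\ strictly positive) operators on Hilbert spaces is positive (resp.\ strictly positive), applied on each $n$‑particle space and then assembled over the direct sum.

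First I would record, from the cited results of Blitvi\'c in Section~\ref{sec:qausian}, that for $|q|\le t$ (with $q,t\in[-1,1]$) the operator $P_{q,t}^{(n)}$ is positive on $H^{\otimes n}$, and for $|q|<t$ it is strictly positive, i.e.\ $\operatorname{Ker}(P_{q,t}^{(n)})=\{0\}$; the same holds for $P_{\v,\w}^{(n)}$ on $\bar H^{\otimes n}$ under $|\v|\le \w$ (resp.\ $|\v|<\w$). Here one must be slightly careful that the relevant operators are the ones acting on the \emph{invariant} subspace $\H = H^{\otimes n}\O\bar H^{\otimes n}$; but since $U_n(\sigma)\O U_n(\gamma)$ commutes with $P_{q,t}^{(n)}\O P_{\v,\w}^{(n)}$ (each factor is built from the symmetric group and commutes with the corresponding $U_n$), the tensor product restricts to $\H$ and positivity/strict positivity pass to the restriction. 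For part (a), on each $n$‑particle space the spectral theorem lets me write $P_{q,t}^{(n)}=A^*A$ and $P_{\v,\w}^{(n)}=B^*B$ for suitable $A,B$, whence $P_{\qq}^{(n)}=(A\O B)^*(A\O B)\ge 0$; equivalently, $\langle \xi,P_{q,t}^{(n)}\xi\rangle_{0,0}\ge 0$ and $\langle \eta,P_{\v,\w}^{(n)}\eta\rangle_{0,0}\ge 0$ on a spanning set of elementary tensors gives $\langle\zeta,P_{\qq}^{(n)}\zeta\rangle_{0,0,0,0}\ge 0$ first for elementary tensors $\zeta=\xi\O\eta$ and then, by the standard argument for tensor products of positive operators, for all $\zeta\in\H$. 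Taking the orthogonal direct sum over $n$ gives positivity of $P_{\qq}$ on $\F$.

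For part (b), I would use that a tensor product of injective bounded operators is injective: if $|q|<t$ and $|\v|<\w$ then $P_{q,t}^{(n)}$ and $P_{\v,\w}^{(n)}$ have trivial kernels, and since these are positive operators this means they are strictly positive (invertible on the finite‑dimensional $n$‑particle space, in fact), so $P_{\qq}^{(n)}=P_{q,t}^{(n)}\O P_{\v,\w}^{(n)}$ is strictly positive on $\H$; equivalently $\operatorname{Ker}(P_{\qq}^{(n)})=\operatorname{Ker}(P_{q,t}^{(n)})\O\bar H^{\otimes n}+H^{\otimes n}\O\operatorname{Ker}(P_{\v,\w}^{(n)})=\{0\}$. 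Because $P_{\qq}=\bigoplus_n P_{\qq}^{(n)}$ is a direct sum of strictly positive operators on mutually orthogonal subspaces, its kernel is $\{0\}$, giving (b). I do not expect a serious obstacle here: the only genuinely substantive input is the Blitvi\'c positivity/strict‑positivity result, which is explicitly quoted in Section~\ref{sec:qausian}; the remaining content is the elementary lemma that tensoring preserves positivity and injectivity, plus the bookkeeping that everything is compatible with restriction to the $\W$‑invariant subspace $\O$ and with the direct sum. The mildest point needing a line of care is precisely that compatibility with the invariant subspace, i.e.\ that $P_{q,t}^{(n)}$ and $P_{\v,\w}^{(n)}$ leave $\H$ invariant, which follows because each commutes with the $U_n$‑action by construction.
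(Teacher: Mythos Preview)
Your approach is the same as the paper's: the paper does not give a separate proof but simply says the proposition follows ``directly from above definition and information from Section~\ref{sec:qausian}'', i.e.\ from the tensor factorisation $P_{\qq}^{(n)}=P_{q,t}^{(n)}\O P_{\v,\w}^{(n)}$ together with Blitvi\'c's positivity/strict-positivity results. Your write-up is a correct elaboration of exactly this.

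Two small corrections. First, the parenthetical ``invertible on the finite-dimensional $n$-particle space'' is wrong: $H$ is an arbitrary separable Hilbert space, so $H^{\otimes n}$ need not be finite-dimensional. Your kernel argument already handles the general case, so just delete that remark. Second, your claimed commutation $U_n(\sigma)P_{q,t}^{(n)}=P_{q,t}^{(n)}U_n(\sigma)$ is false in general (the inversion statistic $l_1$ is not a class function on $\S_n$). Fortunately this is not needed: as the paper notes in Remark~(2) after the definition of $\F$, the symbol $\O$ is simply an alternative notation for $\otimes$ used to distinguish the two levels of tensor product, so there is no genuine invariant-subspace restriction to verify. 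You can safely drop that paragraph.
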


If  $\xx \in\HH_\R$,  then  the adjoint of 
$  a_{q,t}({\xi})\O a_{\v,\w}({\eta})$ is $  a_{q,t}^\ast({\xi})\O a^\ast_{\v,\w}({\eta})$ with respect to  the inner product $\langle \cdot,\cdot \rangle_{\q}.$
This
allows us to define the generalized creator and annihilator. 
\begin{definition} We define  $\FQ$  the {quadrabasic Fock space} which is 
completion of $\F$ with respect to the norm corresponding to $\langle\cdot,\cdot \rangle_{\q}$.
Note that for  $\abs{q}=t$ or $\abs{\v}=\w$ we first have to divide by the
kernel of $P_{\qq}$ before taking the completion. 
For $\xx\in \HH_\R$ we define $\B^\ast_{\xx}:= a^\ast_{q,t}({\xi})\O a^\ast_{\v,\w}(\eta)$. Let $\B_{\xx}=a_{q,t}({\xi})\O a_{\v,\w}(\eta)$ be its adjoint with respect to the inner product $\langle\cdot,\cdot \rangle_{\q}$. 
 Denote by $\state$ the vacuum vector state $\state(\cdot)=\langle\Omega \O \bar \Omega, \cdot\text{ } \Omega\O \bar \Omega\rangle_{\q}$.

\end{definition}
\begin{example}
For $\xi\O \eta  \in \HH_\R$ and $\vec{\xi}_n\O\vec{\eta}_{\bar n}=(\xi_1 \otimes\dots\otimes \xi_n)\O(\eta_{\bar 1}\otimes\dots\otimes \eta_{\bar n})\in\H$ this works as follows
\begin{align*}
&\B^\ast_{\xx}\vec{\xi}_n\O\vec{\eta}_{\bar n}=  (\xi\otimes \xi_1 \otimes\dots\otimes \xi_n)\O(\eta \otimes \eta_{\bar 1}\otimes\dots\otimes \eta_{\bar n}), 
\\
&\B_{\xx}\vec{\xi}_n\O\vec{\eta}_{\bar n}=
\sum_{\substack{i,j\in [n]}} q^{i-1}t^{n-i}\v^{j-1}\w^{\bar{n}-j}\nawiasl\xi\O \xi_i, \eta\O \eta_{\bar j}\nawiasp
\\& \phantom{============} \times  (\xi_1\otimes \dots\otimes \hat{\xi}_i\otimes \dots\otimes \xi_n)\O(\eta_{\bar 1}\otimes \dots\otimes \hat{\eta}_{\bar j}\otimes \dots\otimes \eta_{\bar n}),
\\&\B_{\xx}\Omega\O \bar \Omega =0. 
\end{align*}
\end{example}

\begin{remark}
\begin{enumerate} [(1).]
\item Quadrabasic Fock space creator and annihilator operators depend on four parameters  $q,t,\v,\w$.
If it is necessary to emphasize  this dependence then we  often write  
$\B^{(\q) \ast}$ or $\B^{(\q)}$. We will omit the dependence on $\q$ in the notation if it is clear from the context.
The
same remark applies to other objects, which appear in further parts of the work.
\item  It is easy to see that $\B^\ast: \HH \to \mathbb{B}(\F)$ is linear and $\B: \HH\to \mathbb{B}(\F)$ is anti-linear.
\end{enumerate}

\end{remark}

\subsection{Properties of creation and annihilation operators}\label{Sec3}

If $t=\w=1$, then we get  a nice commutation relation. 
\begin{proposition}\label{commutation}
For $\xi_1\O\eta_1 ,\xi_2\O\eta_2\in\HH_\R$, we have the commutation relation  
\begin{align*}
\B_{\xi_1\O\eta_1}^{(q,1,\v,1)}\B^{(q,1,\v,1)\ast}_{\xi_2\O\eta_2} - q\v\B^{(q,1,\v,1)\ast}_{\xi_2\O\eta_2} \B_{\xi_1\O\eta_1}^{(q,1,\v,1)}
=A_1\O B_2 +  B_1 \O A_2+\nawiasl \xi_1\O\eta_1, \xi_2\O\eta_2\nawiasp \id\O\id
\end{align*}
where $A_1=q a^\ast_{q,1}({\xi_2})a_{q,1}({\xi_1}),$ $A_2=\v a^\ast_{\v,1}({\eta_2})a_{\v,1}({\eta_1}),$ $ B_1=\langle \xi_1,\xi_2\rangle_H \id$ and $ B_2=\langle \eta_1,\eta_2\rangle_{\bar H} \id$.  
\end{proposition}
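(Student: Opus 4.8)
The strategy is to reduce the computation on the quadrabasic Fock space to the already-known $(q,t)$-commutation relation \eqref{commutationrelations} applied twice, once on each tensor leg, and then expand the resulting product. First I would recall that, by construction, $\B_{\xi\O\eta}=a_{q,1}(\xi)\O a_{\v,1}(\eta)$ and $\B_{\xi\O\eta}^\ast=a_{q,1}^\ast(\xi)\O a_{\v,1}^\ast(\eta)$, so that
\[
\B_{\xi_1\O\eta_1}\B^\ast_{\xi_2\O\eta_2}=\bigl(a_{q,1}(\xi_1)a_{q,1}^\ast(\xi_2)\bigr)\O\bigl(a_{\v,1}(\eta_1)a_{\v,1}^\ast(\eta_2)\bigr).
\]
Using \eqref{commutationrelations} with $t=1$ on the first leg gives $a_{q,1}(\xi_1)a_{q,1}^\ast(\xi_2)=q\,a_{q,1}^\ast(\xi_2)a_{q,1}(\xi_1)+\langle\xi_1,\xi_2\rangle_H\,\id$, and similarly on the second leg with $w=1$. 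I would substitute both identities and expand the product of the two two-term sums into four terms.

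\textbf{Key steps.} The four terms are: (i) $q\v\,\bigl(a_{q,1}^\ast(\xi_2)a_{q,1}(\xi_1)\bigr)\O\bigl(a_{\v,1}^\ast(\eta_2)a_{\v,1}(\eta_1)\bigr)$, which is exactly $q\v\,\B^\ast_{\xi_2\O\eta_2}\B_{\xi_1\O\eta_1}$ and gets moved to the left-hand side; (ii) the cross term $q\,\bigl(a_{q,1}^\ast(\xi_2)a_{q,1}(\xi_1)\bigr)\O\bigl(\langle\eta_1,\eta_2\rangle_{\bar H}\,\id\bigr)=A_1\O B_2$; (iii) the other cross term $\bigl(\langle\xi_1,\xi_2\rangle_H\,\id\bigr)\O\bigl(\v\,a_{\v,1}^\ast(\eta_2)a_{\v,1}(\eta_1)\bigr)=B_1\O A_2$; and (iv) the scalar term $\langle\xi_1,\xi_2\rangle_H\langle\eta_1,\eta_2\rangle_{\bar H}\,\id\O\id=\nawiasl\xi_1\O\eta_1,\xi_2\O\eta_2\nawiasp\,\id\O\id$, using that the inner product on $\HH$ factors as a product. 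Collecting (ii)--(iv) on the right and (i) on the left yields precisely the claimed identity with $A_1=qa^\ast_{q,1}(\xi_2)a_{q,1}(\xi_1)$, $A_2=\v a^\ast_{\v,1}(\eta_2)a_{\v,1}(\eta_1)$, $B_1=\langle\xi_1,\xi_2\rangle_H\id$, $B_2=\langle\eta_1,\eta_2\rangle_{\bar H}\id$.

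\textbf{Main obstacle.} The only genuine subtlety is justifying that one may manipulate these operators on the quotient-and-completed quadrabasic Fock space $\FQ$ rather than on the algebraic space $\F$: the operators $a_{q,1}$, $a_{\v,1}$ with $t=w=1$ are the unbounded symmetric/Bose-type operators (cf. the last paragraph of Section~\ref{sec:qausian}), so one should either verify the identity on the dense domain $\F$ (where all actions are given by the explicit finite formulas of the Example) and note that all five operators appearing are defined there, or observe that the identity is really an identity of $\O$-tensor products of the corresponding one-leg identities, each of which holds on the respective algebraic Fock space; the $\O$ (invariant-subspace) structure is respected because each operator involved commutes with the symmetric-group action used to define $\O$. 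A secondary bookkeeping point is simply checking that the exponents $q^{i-1}t^{n-i}$ and $\v^{j-1}\w^{\bar n-j}$ in the Example's formula for $\B_{\xx}$ collapse correctly when $t=\w=1$ so that the leg-wise annihilation operators are exactly the classical $a_{q,1}(\xi)$, $a_{\v,1}(\eta)$; this is immediate. Everything else is the routine expansion described above.
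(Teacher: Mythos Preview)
Your proposal is correct and follows essentially the same route as the paper: write $\B_{\xi_1\O\eta_1}\B^{\ast}_{\xi_2\O\eta_2}$ as the $\O$-tensor of the two leg-wise products, apply the $(q,1)$- and $(\v,1)$-commutation relations \eqref{commutationrelations} on each leg, expand the resulting $2\times 2$ product into four terms, and identify them with $q\v\,\B^{\ast}\B$, $A_1\O B_2$, $B_1\O A_2$, and the scalar term. Your discussion of domains is more cautious than the paper (which simply records the algebraic identity); note also that for $\abs{q}<1$, $t=1$ and $\abs{\v}<1$, $\w=1$ the leg operators are in fact bounded by \eqref{normaoeratora}, so the unboundedness concern only arises at the extreme values $q=1$ or $\v=1$.
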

\begin{remark}
Operators $A_1\O B_2$,  $B_1 \O A_2$ and $\id\O\id$  commute. 
\end{remark}
\begin{proof}
The proof  follows directly from relation \eqref{commutationrelations} between  $a^\ast_{q,1}$ and $a_{\v,1}$, and we see that
\begin{align*}
\B_{\xi_1\O\eta_1}^{(q,1,\v,1)}&\B^{(q,1,\v,1)\ast}_{\xi_2\O\eta_2}
= a_{q,1}({\xi_1})a^\ast_{q,1}({\xi_2})\O a_{\v,1}({\eta_1})a^\ast_{\v,1}({\eta_2})=q a^\ast_{q,1}({\xi_2})a_{q,1}({\xi_1}) \O \v a^\ast_{q,1}({\eta_2})a_{\v,1}({\eta_1})\\&+\langle \xi_1,\xi_2\rangle_H \id\O \v a^\ast_{q,1}({\eta_2})a_{\v,1}({\eta_1}) +  q a^\ast_{q,1}({\xi_2})a_{q,1}({\xi_1})  \O \langle \eta_1,\eta_2\rangle_{\bar H} \id+\nawiasl \xi_1\O\eta_1, \xi_2\O\eta_2\nawiasp \id\O\id
\end{align*}
and the conclusion follows. 
\end{proof}

The norm of the creation operators follows directly from equation \eqref{normaoeratora}. For  
 $\xx \in \HH$, $ \xx \neq0$,  we have 
$$
\|\B^\ast_{\xx}\|_{\q} = \|a^\ast_{q,t}({\xi})\|_{q,t}\|a^\ast_{\v,\w}({\eta})\|_{\v,\w}
$$ 
because $\|\B^\ast_{\xx} \x\O\y\|_{\q}^2=\langle a^\ast_{q,t}(\xi) \x ,a^\ast_{q,t}(\xi) \x \rangle_{q,t}\langle a^\ast_{\v,\w}(\eta)\y  , a^\ast_{\v,\w}(\eta)\y \rangle_{\v,\w}$.

\section{Combinatoric and partitions}\label{sec:kombinatorykapartycje}

\subsection{Diagonal partitions}
For an ordered set $S$, let $\Part(S)$ denote the lattice of set partitions of that set. 
We write $B \in \pi$ if $B$ is a class of $\pi$ and we say that $B$ is a \emph{block of $\pi$}. A block of $\pi$ is called a \emph{singleton} if it consists of one element. 
Similarly, a block of $\pi$ is called a \emph{pair} if it consists of two elements. 
 Let 
$\Sing(\pi)$ and $\Pair(\pi)$ denote the set of all singletons and pairs of $\pi$, receptively.  
The maximal element of $\Part(n)$ under this order is the partition consisting
of only one block and it is denoted by  $\hat{1}_{n}$.
On the other hand, the minimal element $\hat{0}_n$ 
is the unique partition where every block is a singleton.
 Given a partition $\pi$ of the set $[n]$, we write $\Semi(\pi)$ for the set of pairs of integers $(i, j)$ which occur in the same block of $\pi$ such that $j$ is the smallest element of the block greater than $i$. The same notation $\Semi(B)$ is applied to a block $B\in \pi$. Thus, when we draw the points of block then we think that consecutive elements in every block (bigger than one) are connected by arcs above the $x$ axis.
 For a given block of 
partition $\pi\in \P(n)$  every element   is an opener, a closer, a middle point or a singleton for this
block -- see Figure \ref{Polkola}.

\begin{figure}[h]
\begin{center}
  \MMMatching{4}{1/2, 2/3, 3/4}{2}{1/2}{2} {2/3}{2} {3/4}
   \end{center}
\caption{The example of a block and corresponding arcs.    
}
\label{Polkola}
\end{figure}

\begin{definition} \label{def:partycji}
We denote by $\P^{\O }(n)$ the  set of \emph{diagonal partitions}  of  $[n]\sqcup [\bar{n}]$ such that each block of $\pi$  is associated with $[n]$ or $[\bar{n}]$ (there is no connection between them) and  
 the collections of openers, middle points and singletons
of blocks of $\pi|_{[ n]}$ and $\pi|_{[\bar n]}$ coincide (this implies that the collections of closers coincide as well).
\end{definition}
When $n$ is even, we call $\pi \in \P^{\O }(n)$ a \emph{ pair partition of $[n]\sqcup [\bar{n}]$}. If $\pi$ is a pair partition, then each block consists of one arc. The set of diagonal pair partitions of $[n]\sqcup [\bar{n}]$ is denoted by $\P_2^{\O }(n)$ and the set of  pairs or singletons of $[n]\sqcup [\bar{n}]$ is denoted by $\P_{1,2}^{\O }(n)$. 

From Definition \ref{def:partycji} it follows that for every block in $B\in \pi|_{[n]}$ there exists a unique  \textit{conjugate block} $\bar B\in\pi|_{[\bar n]}$,  which starts from the same point. 
This  leads to one more definition. 
 We call block $\BL$ a  \emph{diagonal  block} if  $\BL=B\O \bar B:=(B,\bar B)$, where  $B\in \pi|_{[n]}$ and  $\bar B \in \pi|_{[\bar n]}$ are conjugate. 
The set of such diagonal blocks  is denoted by $\MPair(\pi)$.  
A block $\BL$ of $\MPair(\pi)$ is called a \emph{singleton} if $\BL=(a)\O(\bar a)$.
We note that the diagonal  block it is not a block of $\pi$, but a pair of conjugate blocks.
\begin{example}
In Figure \ref{fig:FiguraExemple1} (a), (b),  (c) and (e) we have:  
\begin{align*}
   &\{{\color{red}(1,4)\O(\bar 1 ,\bar 6)},{\color{blue}(2,6)\O(\bar 2,\bar 5)},{\color{green}( 3, 5)\O(\bar 3,\bar 4)}\}\in \MPair(\pi),
   \\
      &\{{\color{red}(1,6)\O(\bar 1 ,\bar 6)},{\color{blue}(2,5)\O(\bar 2,\bar 5)},{\color{green}( 3, 4)\O(\bar 3,\bar 4)}\}\in\MPair(\pi), 
   \\
 &\{{\color{red}( 1,3,6)\O(\bar 1,\bar 4, \bar 6)},{\color{blue}(2,4,5)\O(\bar 2,\bar 3, \bar 5)}\}\in \MPair(\pi),
    \\
 &\{{\color{red}( 1,3)\O(\bar 1,\bar 4, \bar 5, \bar 6)},{\color{blue}(2,4,5,6)\O(\bar 2,\bar 3)}\}\in \MPair(\pi).
  \end{align*}
  \end{example}

\begin{figure}[h]
\begin{center}$(a)$ \hspace{6.5cm} $(b)$\end{center}
\begin{center}
  \MatchingE{6}{1/4, 2/6, 3/5}{6}{1/6, 2/5, 3/4} \hspace{0.5cm}    \MatchingEE{6}{1/6, 2/5, 3/4}{6}{1/6, 2/5, 3/4}
   \end{center}
   \vspace{0.5cm}    
   \begin{center}$(c)$ \hspace{6.5cm} $(d)$\end{center}
\begin{center}
  \MatchingEEE{6}{1/3, 2/4, 3/6,4/5}{6}{1/4, 4/6,2/3,3/5} \hspace{0.5cm}    \Matching{6}{1/4, 2/6, 3/5}{6}{1/5, 2/3, 4/6}
   \end{center}  
      \vspace{0.5cm}    
   \begin{center}$(e)$ \hspace{6.5cm} $(f)$\end{center}
\begin{center}
  \MatchingEEEE{6}{1/3, 2/4,4/5, 5/6}{6}{1/4, 2/3,4/5,5/6} \hspace{0.5cm}    \Matching{6}{1/2, 2/3, 3/4, 5/6}{6}{1/2, 2/3, 3/4,4/5,5/6}
   \end{center} 
\caption{Examples of $\pi\in\P^{\otimes_{ \scriptscriptstyle\S}}(6)$ -- $(a),(b),(c), (e)$ and $\pi\notin\P^{\otimes_{ \scriptscriptstyle\S}}(6)$ -- $(d),(f)$ .    
}
\label{fig:FiguraExemple1}
\end{figure}
\begin{remark}  \label{rem:partycje}
\begin{enumerate}[(1).]
\item One should note that the Definition \ref{def:partycji} means that left legs of arcs are the same  between different  parts of $[n]$ and $[\bar{n}]$. This means that partitions on $[n]$ and $[\bar{n}]$ are not independent, see Figure \ref{fig:FiguraExemple1} $(a),(b),(c),(e)$.  
\item  A set partition $\pi$ is {noncrossing} if $\rc(\pi)=0$ (for definition of $\rc$ see Subsection \ref{sec:restrictedcrosing}). 
From Definition \ref{def:partycji} it follows that  a set of  partitions 
$\pi$ with respect to $[n]\sqcup [\bar{n}]$ is  noncrossing if and only $\pi|_{[n]}$  is noncrosing and $B=(b_1,\dots,b_n)\in \pi|_{[n]} \iff \bar B=(\bar b_1,\dots,\bar b_n)\in \pi|_{[\bar n]}$; 
see Figure \eqref{fig:FiguraExemple1} $(b)$. 
Indeed, if $\pi_{[n]}$ is noncrossing partition on $[n]$ then there exists an arc of  consecutive integers $(a,a+1)\in \Semi(\pi_{[n]})$. By definition there exists an arc $(\bar a,\bar c)\in\Semi(\pi_{[\bar n]})$. If $\bar c\neq \bar{a}+\bar 1$, then $\bar{a}+\bar 1$ can not be a left leg or singleton of some block in $\pi_{[\bar n]}$ and thus arc $(\cdot,\bar{a}+\bar 1)$ must cross $(\bar a,\bar c)$;  hence we get contradiction and thus $(\bar a,\bar c)=(\bar a,\bar{a}+\bar 1)$. Further, we proceed with the same algorithm.
The set of noncrossing blocks is denoted by $\NC^{\O }(n)$. 

\item  Let $S=\{s_{a_i}\}_{i\in I}$ and $T=\{t_{b_i}\}_{i\in I}$ be two  indexing  sets $a_i,b_i \in \N$ with the same cardinality $|S|=|T|$. 
The diagonal Cartesian product of two sets $S$ and $T$, denoted $S\times_\Delta T$ is the set of all ordered pairs $(s_{a_i},t_{b_i})$ where $s_{a_i}$ is in $S$ and $t_{b_i}$ is in $T$. In terms of set-builder notation, that is 
$$S\times_\Delta T=\{(s_{a_i},t_{b_i})|s_{a_i}\in S, t_{b_i}\in T\}.$$
Clearly $S\times_\Delta T\subset S\times T$, where $S\times T$ is the Cartesian product of $S$ with $T$.
For a
class $B\in \pi$ , denote by $f(B)$ its first element.
Let us number the  block of $\pi$ according to the order of their first elements, i.e. $$\pi=\{B_{f(B)}\mid B\in \pi\},\quad \pi \in \Part([n]).$$ 
Then for $\pi\in  \P^{\O }(n)$ we have 
\begin{align*} \MPair(\pi) =\pi_{[n]}\times_\Delta \pi_{[\bar n]}
\end{align*}
 That is why
 we called it the diagonal partitions. For example 
 if $$\pi=\{(1,4)\,(2,6),( 3, 5),(\bar 1 ,\bar 6),(\bar 2,\bar 5),(\bar 3,\bar 4)\}$$
 then $\quad \pi_{[\bar n]}=\{(1,4)\,(2,6),( 3, 5)\}$, $ \pi_{[\bar n]}= \{(\bar 1 ,\bar 6),(\bar 3,\bar 4),(\bar 2,\bar 5)\}$ and 
 \begin{align*} \MPair(\pi) =\pi_{[n]}\times_\Delta \pi_{[\bar n]}&= \{((1,4), (\bar 1 ,\bar 6)),((2,6),(\bar 2,\bar 5)),( ( 3, 5),\bar 3,\bar 4))\} 
 \\&= \{(1,4)\O (\bar 1 ,\bar 6),(2,6)\O(\bar 2,\bar 5), ( 3, 5)\O(\bar 3,\bar 4)\} .
\end{align*}

\item From Definition \ref{def:partycji} it follows that whenever $$\{\hat{1}_{n}\O \bar B\}\in \MPair(\pi) \text{ or }\{B\O \hat{1}_{\bar n}\}\in \MPair(\pi) \text{ for }\pi \in  \P^{\O }(n)$$ then $\hat{1}_{n}\O \bar B=B\O \hat{1}_{\bar n}=\hat{1}_{n}\O \hat{1}_{\bar n}$. 
\item 
The Definition \ref{def:partycji}  say that
\begin{enumerate}[I]
\item  each block of $\pi$  is associated with $[n]$ or $[\bar{n}]$ (no connection between them);
\item   $B=(a,\dots, b)$  is a block of at least two elements in $\pi|_{[n]}$ $\iff$  $\bar B=(\bar{a} ,\dots,\bar{c})$  is a block of at least two elements in $\pi|_{[\bar{n}]}$;
\item $(a, b)$  is an arc in $\pi|_{[n]}$ if and only if    $(\bar{a},\bar{c})$  is an arc  in $\pi|_{[\bar{n}]}$;
\item  $a$  is singleton in $\pi|_{[n]}$, if and only if  $\bar a$  is singleton in $\pi|_{[\bar{n}]}$.
\end{enumerate}
\item It is worth to emphasize that full Fock space $(\bigoplus_{n=0}^\infty H^{\otimes n})\otimes (\bigoplus_{n=0}^\infty \bar H^{\otimes n})$  is studied in the context of  semi-meander polynomials which are
used in the enumeration of semi-meandric systems; see  \cite{NicaPing2018}. In the context of enumeration of such objects we can say that the diagonal pair partition corresponds to a special subset  of  the self-intersecting meandric system $(t=\w=1)$, such that closed  pairs which intersect the x-axis  have the same parity. We skip formal description of it and just heuristically explain that we can combine negative and positive parts by $i\leftrightarrow \bar i$  as in the figure: 
\begin{center}
\Matching{6}{1/3, 2/5, 4/6}{6}{4/5, 1/3,2/6}  \MatchingMeanders{6}{1/3, 2/5, 4/6}{6}{4/5, 1/3,2/6}
\end{center} 
It is worth to mention that the number of such pair partitions is the Euler number; see Corollary \ref{cor:euler}.
\end{enumerate} 
\end{remark}
\subsection{Statistics}We introduce some partition statistics for $\pi \in \P(n)$ which are necessary to obtain  the  moment-cumulant formula. 
 These  statistics  are naturally extended to the set $\P^{\O}(n)$,  i.e. separately for the parts $[n]$ and $[\bar n]$.
  We say that an arc (or pair) $(i,j)$ \emph{is crossing} the arc $(i',j')$ if  $i < i' < j < j'  \textrm{ or } i' < i < j' < j $ and similarly, we say that  an arc (or pair) $(i,j)$ \emph{nests} $(i',j')$ if $i <k <j$ for any $k\in \{i',j'\}$. Similarly we say that  an arc (or pair) $(i,j)$ \emph{covers} singleton $k$ if $i <k <j$.  
\subsubsection{Statistics related to Gausian operator -- pairs and singletons}   
For a set partition $\pi\in\P_{1,2}(n)$ let $\Cr(\pi)$  be the number of crossings of $\pi$, i.e.\ 
\begin{align*}
\Cr(\pi)&=\#\{(V,W) \in \Pair(\pi) \times\Pair( \pi) \mid V \textrm{ is crossing } W \}. 
\intertext{
Let $\InS(\pi)$ be the number of pairs of a singleton and a covering block: }  
\InS(\pi)&=\#\{(V,W) \in \Sing(\pi) \times\Pair( \pi) \mid   W \text{~covers~}V  \}. 
  \intertext{Let $\Cov(\pi)$ be the number of pairs of a nesting block: }
\Cov(\pi)&=\#\{(V,W) \in  \Pair(\pi) \times\Pair( \pi) \mid  W \text{~nests~}V  \}. 
\intertext{Let $\SL(\pi)$ be the number of singleton  to the right of pairs:  }
\SL(\pi)&=\#\{(V,W) \in \Sing(\pi) \times \Pair(\pi) \mid  
 V =(i) \text{ and } i > j \text{~for all~} j\in W \}. 
\end{align*}
\subsubsection{Statistics related to gauge operator -- block of size at least three} \label{sec:restrictedcrosing} 
For  $\pi\in\P(n)$ we define two statistics, which are related to the gauge operator. 

\noindent \textbf{\emph{Restricted crossings}}. 
 We use the same definition of \emph{restricted crossings} as given in Biane \cite{Bia97}, namely
\begin{align*}
\rc ( B,\widetilde{B}):
= &\# \big\{(V,W)\in \Semi(B)\times \Semi(\widetilde{B}) \mid
 \textrm{ such that } V \textrm{ is crossing } W \big\}.
\end{align*}
For a set partition $\pi\in\P(n)$ let $\rc(\pi)$ be the number of  restricted crossings of $\pi$: 
$$\rc{(\pi)} := \sum_{i<j} \rc(B_i, B_j),$$ where $\pi\setminus \Sing(\pi)=\{B_1,\dots,B_l\}$.

\noindent \textbf{\emph{Restricted  nestings}}. 
 Now we define the number of \emph{restricted  nestings} of the partition $\pi\in\P(n)$.  The set of restricted nestings of $B,\widetilde{B}$ is
\begin{align*} \nest(B,\widetilde{B}):=&\#\big\{(V,W)\in \Semi(B)\times \Semi(\widetilde{B})\mid V \text{~nests~} W\textrm{ } or \textrm{ } W \text{~nests~} V\big\},
\end{align*}
and the set of restricted nestings of $\pi$ is 
\begin{align*}\nest(\pi) := \sum_{i<j} \nest(B_i, B_j),\end{align*} 
where $\pi\setminus \Sing(\pi)=\{B_l,\dots,B_l\}$. 
\section{Moments of Gaussian  operator}
We present an example of a generalized Gaussian operator. It is given by
creation and annihilation operators on a quadrabasic Fock space.
We show that the distribution of these operators with
respect to the vacuum expectation is a generalized Gaussian distribution, in the
sense that all moments can be calculated from the second moments with the help of
a combinatorial formula.
\subsection{Gaussian operator}
\begin{definition}
The operator 
\begin{equation}
\G_{\xx}= \B_{\xx} +\B^\ast_{\xx},\qquad \xx \in \HH_\R
\end{equation}
on $\FQ$ is called the  {quadrabasic  Gaussian operator}.
\end{definition}
\begin{remark}
\begin{enumerate}[(1).]
\item From estimating the norm of the creation (annihilator) operators we conclude that $\G_{\xx}$ is bounded,  whenever $|q|,|\v|<1.$
\item If $q=t=\v=\w=1$, then  the kernel of the symmetrization is nontrivial. In this case the square of the operator $P_{(1,1,1,1)}^{(n)}$ given by the  expression ${[P_{(1,1,1,1)}^{(n)}]}^2=\frac{1}{n!^2}P_{(1,1,1,1)}^{(n)}$  projects $\H$ to the space of  special  symmetric functions ${\tilde {\bf \mathcal H}_{ n}}$
\begin{align*}
\vec{\xi}_n\tilde{\O}\vec{\eta}_{\bar n}:&=  (\xi_1 \tilde \otimes\dots\tilde \otimes \xi_n)\tilde \O(\eta_{\bar 1}\tilde \otimes\dots \tilde \otimes \eta_{\bar n}) \\
&=P_{(1,1,1,1)}^{(n)}(\xi_1  \otimes\dots  \otimes \xi_n) \O(\eta_{\bar 1} \otimes\dots  \otimes \eta_{\bar n})
\\& = \big(\sum_{\sigma \in \S_n}\xi_{\sigma(1)}  \otimes\dots  \otimes \xi_{\sigma(n)}\big) \O\big(\sum_{\sigma \in \S_{\bar n}} \eta_{\sigma(\bar 1)} \otimes\dots  \otimes \eta_{\sigma(\bar n)}\big)
 \intertext{i.e.  we have }
 \sigma(\vec{\xi}_n\tilde{\O}\vec{\eta}_{\bar n})&=\vec{\xi}_n\tilde{\O}\vec{\eta}_{\bar n}\quad \sigma \in \W. 
\end{align*}
More precisely, the operator $\B^\ast_{\xi_1\O \eta_{\bar 1}}$ for $\vec{\xi}_{n-1}\tilde \O\vec{\eta}_{\overline{n-1}}=(\xi_2 \tilde \otimes\dots \tilde \otimes \xi_{n-1})\tilde \O(\tilde \eta_{\bar 2}\tilde \otimes\dots\tilde \otimes \eta_{\overline{ n-1}})\in{\tilde {\bf \mathcal H}_{ n}}$ and  $\xi_1\O \eta_{\bar 1} \in \HH_\R$   works in the following way 
\begin{align*}
\B^\ast_{\xi_1\O \eta_{\bar 1}}\vec{\xi}_{n-1}\tilde \O\vec{\eta}_{\overline{n-1
}} &= (\xi_1 \tilde \otimes \vec{\xi}_{n-1})\tilde \O(\eta_{\bar 1} \tilde \otimes  \vec{\eta}_{\overline{n-1}} ) \\& = \big(\sum_{\sigma \in \S_n}\xi_{\sigma(1)}  \otimes\dots  \otimes \xi_{\sigma(n)}\big) \O\big(\sum_{\sigma \in \S_{\bar n}} \eta_{\sigma(\bar 1)} \otimes\dots  \otimes \eta_{\sigma(\bar n)}\big). 
%
\end{align*}

Thus the scalar product automatically implements the additional relations 
 $\B^\ast_{\xi_1\O\xi_{\bar 1}}\B^\ast_{\xi_2\O\xi_{\bar 2}}=\B^\ast_{\xi_2\O\xi_{\bar 2}}\B^\ast_{\xi_1\O\xi_{\bar 1}}$ on ${\tilde {\bf \mathcal H}_{ n}}$. From this we conclude that when $q=t=\v=\w=1$,  the operators $ \G_{\xi_1\O\xi_{\bar 1}}$ and $ \G_{\xi_2\O\xi_{\bar 2}}$ are commutative on ${\tilde {\bf \mathcal H}_{ n}}$.   
Notice that in this case if $\xi_1 \perp \xi_2$ and  $\xi_{\bar 1} \perp \xi_{\bar 2}$, then 
$\state(\G_{\xi_1\O\xi_{\bar 1}}^n\G_{\xi_2\O\xi_{\bar 2}}^m)=\state(\G_{\xi_1\O\xi_{\bar 1}}^n)\state(\G_{\xi_2\O\xi_{\bar 2}}^m)$  which coincides
with classical independence. 
 \end{enumerate}
\end{remark}
 
 \subsection{Orthogonal polynomials } \label{wielortogonalne}
For a probability measure $\mu$ with finite moments of all orders, let us orthogonalize the sequence $(1,x,x^2,x^3,\dots)$ in the Hilbert space $L^2(\R,\mu)$, following the Gram-Schmidt method. This procedure yields orthogonal polynomials $(P_0(x), P_1(x), P_2(x), \dots)$ with $\text{deg}\, P_n(x) =n$. Multiplying by constants, we take $P_n(x)$ to be monic, i.e., the coefficient of $x^n$ is 1. It is known that they satisfy a recurrence relation
\begin{align} \label{wielortogonalnerekursia}
x P_n(x) = P_{n+1}(x) +\beta_n P_n(x) + \gamma_{n-1} P_{n-1}(x),\qquad n =0,1,2,\dots
\end{align}
with the convention that $P_{-1}(x)=0$. The coefficients $\beta_n$ and $\gamma_n$ are called \emph{Jacobi parameters} and they satisfy $\beta_n \in \R$ and $\gamma_n \geq 0$. 
It is known that 
\begin{equation}\label{eq54}
\gamma_0 \cdots \gamma_n=\int_{\R}|P_{n+1}(x)|^2\mu(d x),\qquad n \geq 0.
\end{equation} 
Moreover, the measure $\mu$ has a finite support of cardinality $N$ if and only if $\gamma_{N-1}=0$ and $\gamma_n > 0$ for $n = 0,\dots, N-2$. 

The continued fraction representation of the Cauchy transform can be expressed in terms of the Jacobi parameters:
\[
\int_{\R}\frac{\mu(d t)}{z-t} = \dfrac{1}{z-\beta_0 -\dfrac{\gamma_0}{z-\beta_1-\dfrac{\gamma_1}{z- \beta_2 - \cdots}}}.
\]
This representation is useful to calculate the Cauchy transform when Jacobi parameters are given. More details can be found in \cite{HO07}. Notice that if $\lim_{n\to \infty }\beta_n=\beta>0 $ and $\lim_{n\to \infty }\gamma_n=\gamma $ then absolutely continuous part of measure $\mu$ is supported on $\left(-2\sqrt{\gamma}+\beta,{2}{\sqrt{\gamma}+\beta}\right),$ see \cite{Chihara1978}.

Let $(Q_n^{(\q)}(x))_{n=0}^\infty$ be the quadrabasic Hermite orthogonal polynomials \eqref{recursion}. 
The orthogonalizing probability measure $\qMP_{\qq}$ is unknown in general case but from Section \ref{wielortogonalne}  we see that if $t=\w=1$, then absolutely continous part of $\qMP_{q,1,\v,1}$ is supported on 
$\left(\frac{-2}{\sqrt{1-q}\sqrt{1-\v}},\frac{2}{\sqrt{1-q}\sqrt{1-\v}}\right).$
\begin{theorem} \label{rozkladgaussa} Suppose that  $ q,\v\in(-1,1)$,  $\xx \in \HH_\R$ and $ \|\xx\|=1$. Let $\m$ be the probability distribution of $\G_{\xx}$ with respect to the vacuum state. Then $\m$ is equal to $\qMP_{\qq}$. 
\end{theorem}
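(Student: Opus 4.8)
The plan is to build inside $\FQ$ an explicit orthogonal sequence on which $\G_{\xx}$ acts like a three–term Jacobi matrix, and then to observe that this Jacobi matrix is exactly the one encoded by the recurrence \eqref{recursion}. First I would reduce to a normalised situation: both $\G_{\xx}$ and all the vectors used below depend only on $\xx=\xi\O\eta$ and are unchanged under the rescaling $\xi\mapsto c\xi$, $\eta\mapsto c^{-1}\eta$, so from $1=\|\xx\|=\|\xi\|_H\|\eta\|_{\bar H}$ we may and do assume $\|\xi\|_H=\|\eta\|_{\bar H}=1$. For $n\ge 1$ set $e_n:=\xi^{\otimes n}\O\eta^{\otimes n}\in\HH^{\otimes n}$ and $e_0:=\Omega\O\bar\Omega$; these are pairwise orthogonal for $\langle\cdot,\cdot\rangle_{\q}$ since they lie in different particle spaces.

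The main computation is the norm of $e_n$. By definition of the deformed inner product and of $P^{(n)}_{\qq}=P^{(n)}_{q,t}\O P^{(n)}_{\v,\w}$,
\[
\|e_n\|_{\q}^2=\langle\xi^{\otimes n},P^{(n)}_{q,t}\xi^{\otimes n}\rangle\,\langle\eta^{\otimes n},P^{(n)}_{\v,\w}\eta^{\otimes n}\rangle .
\]
Plugging all legs equal to $\xi$ into the definition of $P^{(n)}_{q,t}$ gives $P^{(n)}_{q,t}\xi^{\otimes n}=\big(\sum_{\sigma\in\S_n}q^{l_1(\sigma)}t^{l_2(\sigma)}\big)\xi^{\otimes n}$, and the $(q,t)$–Mahonian identity $\sum_{\sigma\in\S_n}q^{l_1(\sigma)}t^{l_2(\sigma)}=\prod_{k=1}^{n}[k]_{q,t}$ (also immediate by iterating the factorisation \eqref{decomposition}, since $\sigma_1\cdots\sigma_k$ fixes $\xi^{\otimes n}$) yields
\[
\|e_n\|_{\q}^2=\Big(\prod_{k=1}^{n}[k]_{q,t}\Big)\Big(\prod_{k=1}^{n}[k]_{\v,\w}\Big).
\]
In particular $e_n\neq 0$ unless some $[k]_{q,t}[k]_{\v,\w}$ vanishes, which happens only at $q=t=0$ or $\v=\w=0$.

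Next I would compute the action of $\B^{\ast}_{\xx}$ and $\B_{\xx}$ on this sequence. Directly, $\B^{\ast}_{\xx}e_n=e_{n+1}$, while the formula in the Example, evaluated at $\xi_i=\xi$ and $\eta_{\bar j}=\eta$ (the pairings all reduce to $\|\xi\|_H^2\|\eta\|_{\bar H}^2=1$), gives
\[
\B_{\xx}e_n=\Big(\sum_{i=1}^{n}q^{i-1}t^{n-i}\Big)\Big(\sum_{j=1}^{n}\v^{j-1}\w^{n-j}\Big)e_{n-1}=[n]_{q,t}\,[n]_{\v,\w}\,e_{n-1}.
\]
Hence $\G_{\xx}e_n=e_{n+1}+[n]_{q,t}[n]_{\v,\w}\,e_{n-1}$. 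Consequently $\mathcal{K}:=\overline{\mathrm{span}\{e_n:n\ge 0\}}$ is a closed $\G_{\xx}$–invariant subspace containing $\Omega\O\bar\Omega$, and since $\G_{\xx}^{\,m}(\Omega\O\bar\Omega)=e_m+(\text{linear combination of }e_0,\dots,e_{m-1})$, the $e_n$ span the cyclic subspace of $\G_{\xx}$ generated by the vacuum — which is all that the vacuum distribution of $\G_{\xx}$ depends on.

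Finally, normalising to $\hat e_n:=e_n/\sqrt{\prod_{k=1}^{n}[k]_{q,t}[k]_{\v,\w}}$ turns the above into
\[
\G_{\xx}\hat e_n=\sqrt{[n+1]_{q,t}[n+1]_{\v,\w}}\;\hat e_{n+1}+\sqrt{[n]_{q,t}[n]_{\v,\w}}\;\hat e_{n-1},
\]
so $\G_{\xx}|_{\mathcal{K}}$ is unitarily equivalent to the Jacobi matrix with $\beta_n=0$ and $\gamma_{n-1}=[n]_{q,t}[n]_{\v,\w}$. Since $q,\v\in(-1,1)$ and $0\le t,\w\le 1$ force $[n]_{q,t},[n]_{\v,\w}$ to be bounded uniformly in $n$ (by $(1-|q|)^{-1}$ and $(1-|\v|)^{-1}$), this Jacobi matrix is a bounded self-adjoint operator, so the vacuum distribution $\m$ of $\G_{\xx}$ is its spectral measure at the cyclic vector $\hat e_0$ — a compactly supported measure, hence the unique solution of its (determinate) moment problem. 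By the correspondence recalled in Section \ref{wielortogonalne}, that measure is exactly the one whose monic orthogonal polynomials satisfy $xQ_n=Q_{n+1}+[n]_{q,t}[n]_{\v,\w}Q_{n-1}$, namely \eqref{recursion}, which by definition is orthogonalised by $\qMP_{\qq}$; so $\m=\qMP_{\qq}$. The only genuinely delicate point is the norm identity and its correct behaviour under the quotient in the boundary cases $|q|=t<1$ or $|\v|=\w<1$ (there $P_{\qq}$ is merely positive semidefinite, but $\prod_{k=1}^{n}[k]_{q,t}>0$ still holds, except in the degenerate cases $q=t=0$ or $\v=\w=0$, which collapse $\mathcal{K}$ to a finite-dimensional space and make $\qMP_{\qq}$ finitely supported, consistently with $\gamma_1=0$); everything else is the familiar principle that creation plus annihilation on a Fock space acts as a Jacobi matrix on the vacuum's cyclic subspace, here complicated only by keeping track of four parameters.
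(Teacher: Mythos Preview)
Your proof is correct and follows essentially the same route as the paper's: build the orthogonal sequence $e_n=\xi^{\otimes n}\O\eta^{\otimes n}$, compute $\|e_n\|_{\q}^2=\prod_{k\le n}[k]_{q,t}[k]_{\v,\w}$ (the paper does this via the factorisation \eqref{decomposition} rather than the Mahonian identity, but these are the same computation), verify $\G_{\xx}e_n=e_{n+1}+[n]_{q,t}[n]_{\v,\w}e_{n-1}$, and conclude by moment determinacy of a compactly supported measure. One small inaccuracy in a side remark: $[k]_{q,t}$ can vanish not only at $q=t=0$ but also whenever $q=-t$ and $k$ is even (e.g.\ $q=-t\in(-1,0)$), though your handling of the finitely supported case covers this anyway.
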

\begin{proof} 
Let $\gamma_{n-1}= [n]_{q,t}[n]_{\v,\w}$, then 
\begin{align*}
\|\xi^{\otimes n}\O\eta^{\otimes n}\|_{\q}^2
&= \langle \xi^{\otimes n},P_{q,t}^{(n)}\xi^{\otimes n}\rangle_{0,0} \langle \eta^{\otimes n},P_{\v,\w}^{(n)}\eta^{\otimes n}\rangle_{0,0} \\&
 =\langle \xi^{\otimes n},( I\otimes P^{(n-1)}_{q,t})R^{(n)}_{q,t}\xi^{\otimes n}\rangle_{0,0}\langle \eta^{\otimes n},( I\otimes P^{(n-1)}_{\v,\w})R^{(n)}_{\v,\w}\eta^{\otimes n}\rangle_{0,0}
 \intertext{We remind that the operator $R^{(n)}_{q,t}$ given by $R^{(n)}_{q,t} = t^{n-1}+\sum_{k=1}^{n-1}q^{k}t^{n-k-1}\sigma_{1}\cdots \sigma_{k}$  acts as follows $R^{(n)}_{q,t}\xi^{\otimes n}=[n]_{q,t}\xi^{\otimes n}$, because $\sigma_{1}\cdots \sigma_{k}\xi^{\otimes n}=\xi^{\otimes n}$ and $[n]_{q,t}=t^{n-1}+\sum_{k=1}^{n-1}q^{k}t^{n-k-1}$.
 We now expand further $R^{(n)}_{q,t}\xi^{\otimes n}=[n]_{q,t}\xi^{\otimes n}$,  $R^{(n)}_{\v,\w}\eta^{\otimes n}=[n]_{\v,\w}\eta^{\otimes n}$ and obtain 
 }
 &=
\|\xi\|^2\|\eta\|^2 [n]_{q,t} [n]_{\v,\w}\|\xi^{\otimes (n-1)}\O\eta^{\otimes (n-1)}\|_{\q}^2 =\gamma_0\gamma_1\cdots \gamma_{n-1}. 
\end{align*}
and hence from \eqref{eq54} it follows that 
\begin{equation}
\|\xi^{\otimes n}\O\eta^{\otimes n}\|_{\q}= \|Q_n^{(\q)}\|_{L^2},\qquad n\in\N\cup\{0\}. 
\end{equation}
Therefore, the map $\Phi\colon (\text{span}\{\xi^{\otimes n}\O\eta^{\otimes n}\mid n \geq 0\}, \|\cdot\|_{\q}) \to L^2(\R,\qMP_{\qq})$ defined by $\Phi(\xi^{\otimes n}\O\eta^{\otimes n})= Q_n^{(\q)}(x)$ is an isometry. 
Note that  
\begin{align*}
\G_{\xx}\text{ }\xi^{\otimes n}\O\eta^{\otimes n} 
&= \B^\ast_{\xx}\text{ }\xi^{\otimes n}\O\eta^{\otimes n}+\B_{\xx} \text{ }\xi^{\otimes n}\O\eta^{\otimes n}
\\&= \xi^{\otimes (n+1)}\O\eta^{\otimes (n+1)}+(a_{q,t}(\xi)\xi ^{\otimes n})\O (a_{\v,\w}(\eta)\eta ^{\otimes n})
\\
&=\xi^{\otimes (n+1)}\O\eta^{\otimes (n+1)}+([n]_{q,t}\xi ^{\otimes (n-1)})\O ([n]_{\v,\w}\eta ^{\otimes (n-1)})\\
&= \xi^{\otimes (n+1)}\O\eta^{\otimes (n+1)}+[n]_{q,t}[n]_{\v,\w}\xi^{\otimes (n-1)}\O\eta^{\otimes (n-1)}, 
\end{align*}
Hence, by induction we can compute $\G_{\xx}^n\Omega\O \bar \Omega$ and show that $\Phi(\G_{\xx}^n\Omega \O \bar \Omega) = x^n$. Since $\Phi$ is an isometry we get 
$\langle \Omega\O \bar \Omega, \G_{\xx}^n\Omega\O \bar \Omega\rangle_{\q} = m_n(\qMP_{\qq})$ for  $n\in \N$. 
Since $\qMP_{\qq}$ is compactly supported, probability measures giving the moment sequence $m_n(\qMP_{\qq})$ are unique and hence $\qMP_{\qq}=\m$.  
\end{proof}
\subsubsection{Interesting cases of orthogonal polynomials }  \label{subsec:intersingpolynomial}
We present a class of orthogonal polynomials  corresponding to known
measures.
\newline
\noindent \textbf{$q$-Meixner-Pollaczek polynomials $q=\v$ and $t=\w=1$}. 
For $-1 < \alpha,q <1$ let $(\tilde{P}_n^{(\alpha,q)}(x))_{n=0}^\infty$ be the orthogonal polynomials with the recursion relation
\begin{equation}\label{recursion2}
x\tilde{P}_n^{(\alpha, q)}(x) = \tilde{P}_{n+1}^{(\alpha, q)}(x) +[n]_q(1 + \alpha q^{n-1})\tilde{P}_{n-1}^{(\alpha, q)}(x), \qquad n=0,1,2,\dots
\end{equation}
where $\tilde{P}_{-1}^{(\alpha,q)}(x)=0,\tilde{P}_0^{(\alpha,q)}(x)=1$. These polynomials are called \emph{$q$-Meixner-Pollaczek polynomials}. The orthogonalizing probability measure $\qqMP_{\alpha,q}$ is known in \cite[(14.9.4)]{KLS10}, supported on $(-2/\sqrt{1-q}, 2/\sqrt{1-q})$ and absolutely continuous with respect to the Lebesgue measure with density 
\begin{equation}\label{eq10}
\frac{d \qqMP_{\alpha,q}}{dt}(x)=  \frac{(q;q)_\infty(\beta^2; q)_\infty}{2\pi\sqrt{4/(1-q) -x^2}}\cdot\frac{g(x,1;q) g(x,-1;q) g(x,\sqrt{q};q) g(x,-\sqrt{q};q)}{g(x, \underline{i} \beta;q)g(x,-\underline{i}\beta;q)}
\end{equation}
where 
\begin{align}
&g(x,b;q)= \prod_{k=0}^\infty(1-4 b x (1-q)^{-1/2} q^k + b^2 q^{2k}), \\
&(s;q)_\infty=\lim_{n\to\infty}(s; q)_n=\prod_{k=0}^\infty(1-s q^{k}),\qquad s \in \R, \\
&\beta
= 
\begin{cases}
\sqrt{-\alpha}, & \alpha \leq 0, \\
\ri \sqrt{\alpha}, & \alpha \geq0.   
\end{cases}
\end{align}

\begin{proposition} \label{MeixnerPollaczekPolynomials}The measure  $\qMP_{(q,1,q,1)}$ belongs to the family $\qqMP_{-q,q}$ for $\abs q <1$.
\end{proposition}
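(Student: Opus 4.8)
The plan is to match the two recurrence relations. By Theorem~\ref{rozkladgaussa}, the measure $\qMP_{(q,1,q,1)}$ is the orthogonalizing measure of the monic polynomials $Q_n^{(q,1,q,1)}$ satisfying \eqref{recursion} with the specialization $t=\w=1$ and $\v=q$; that is,
\begin{align*}
x Q_n^{(q,1,q,1)}(x) = Q_{n+1}^{(q,1,q,1)}(x) + [n]_{q,1}[n]_{q,1}\, Q_{n-1}^{(q,1,q,1)}(x),\qquad n=0,1,2,\dots,
\end{align*}
with $Q_{-1}=0$, $Q_0=1$. Since $[n]_{q,1}=[n]_q$, the Jacobi parameter here is $\gamma_{n-1}=[n]_q^2$. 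On the other hand, $\qqMP_{\alpha,q}$ with $\alpha=-q$ is the orthogonalizing measure of the monic polynomials $\tilde P_n^{(-q,q)}$ satisfying \eqref{recursion2}, whose Jacobi parameter is $\gamma_{n-1}=[n]_q(1+\alpha q^{n-1})=[n]_q(1-q\cdot q^{n-1})=[n]_q(1-q^n)$.

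The key algebraic identity is therefore
\begin{align*}
[n]_q^2 = [n]_q(1-q^n),
\end{align*}
which would require $[n]_q=1-q^n$; but $[n]_q = 1+q+\cdots+q^{n-1} = \frac{1-q^n}{1-q}$, so in fact $[n]_q^2 = \left(\frac{1-q^n}{1-q}\right)^2$ whereas $[n]_q(1-q^n) = \frac{(1-q^n)^2}{1-q}$. These differ by the constant factor $1/(1-q)$. Hence the two sequences of orthogonal polynomials are related by a rescaling of the variable: if $\gamma_{n-1}^{(1)} = c^2\gamma_{n-1}^{(2)}$ for all $n$ (with $\beta_n=0$ in both cases), then the monic orthogonal polynomials satisfy $P_n^{(1)}(x) = c^n P_n^{(2)}(x/c)$, and the corresponding measures are related by the pushforward under $x\mapsto cx$. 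Here $c^2 = 1/(1-q)$, i.e. $c = 1/\sqrt{1-q}$, which matches the supports: $\qMP_{(q,1,q,1)}$ is supported on $\left(\frac{-2}{1-q},\frac{2}{1-q}\right)$ by the remark before Theorem~\ref{rozkladgaussa}, while $\qqMP_{-q,q}$ is supported on $\left(\frac{-2}{\sqrt{1-q}},\frac{2}{\sqrt{1-q}}\right)$, and indeed $\frac{2}{1-q} = \frac{1}{\sqrt{1-q}}\cdot\frac{2}{\sqrt{1-q}}$.

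So the steps are: (i) read off the Jacobi parameters of both families from \eqref{recursion} and \eqref{recursion2}; (ii) observe $\beta_n=0$ for both and that $\gamma_{n-1}^{Q}=(1-q)\,\gamma_{n-1}^{\tilde P}$; (iii) invoke the standard fact that scaling all $\gamma$'s by a common factor $c^2$ corresponds to the dilation $x\mapsto cx$ of the orthogonalizing measure (equivalently, check directly that $x\mapsto(1-q)^{-1/2}x$ transforms one recurrence into the other); (iv) conclude $\qMP_{(q,1,q,1)} = (D_{1/\sqrt{1-q}})_*\,\qqMP_{-q,q}$, i.e. it is the member of the family $\qqMP_{-q,q}$ up to the stated dilation — and since both measures are compactly supported, the determinacy argument of Theorem~\ref{rozkladgaussa} makes this identification unambiguous. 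I do not anticipate a real obstacle here; the only point requiring care is the bookkeeping of the dilation constant and making sure the normalization (monic polynomials, probability measure) is consistent on both sides, and the phrase ``belongs to the family'' should be read modulo this affine rescaling, which is exactly what the support comparison confirms.
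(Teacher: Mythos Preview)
Your approach is essentially the same as the paper's: both identify the Jacobi parameters of the two families, observe that $[n]_q^2$ and $[n]_q(1-q^n)$ differ by the constant factor $1/(1-q)$, and conclude via the dilation $x\mapsto x/\sqrt{1-q}$ (the paper carries out this rescaling explicitly by substituting $L_n(y)=U_n(y\sqrt{1-q})/(1-q)^{n/2}$, while you invoke the general fact about scaling $\gamma$'s). One small slip: in your summary step (ii) you wrote $\gamma_{n-1}^{Q}=(1-q)\,\gamma_{n-1}^{\tilde P}$, but your own computation just above gives the reciprocal, $\gamma_{n-1}^{Q}=\frac{1}{1-q}\,\gamma_{n-1}^{\tilde P}$; the rest of your argument (with $c^2=1/(1-q)$ and the support check) uses the correct direction.
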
 
\begin{proof}
If we put $\alpha=-q$ in the recurrence \eqref{recursion2}, then 
\begin{align}
y U_n(y) = U_{n+1}(y)  + [n]_q(1 - q^{n}) U_{n-1}(y), \qquad n\geq 1. \label{pomtfreepoisson}
\end{align}
Now, let us substitute $L_n(y)=U_n(y\sqrt{1-q})/{(1-q)}^{\frac{n}{2}}$, multiply \eqref{pomtfreepoisson} by ${(1-q)}^{\frac{-n-1}{2}}$ and replace $y $ by $\sqrt{1-q}y$,  then we get the recursion
$$
y L_n(y) = L_{n+1}(y)  + [n]_q^2 L_{n-1}(y), \qquad n\geq 1, \label{IsmailSalam}
$$
with $L_{0}(y)=1$ and $L_{1}(y)=y$, so we see that $L_n(y)={Q}_{n}^{(q,1,q,1)}(y).$ This observation means that the 
 recurrence \eqref{pomtfreepoisson} corresponds to monic orthogonal polynomials which orthogonalize  the distribution of $\sqrt{1-q}G^{(q,1,q,1)}_{\xx}$.
\end{proof}

\noindent\textbf{Hyperbolic secant}. 
Professor M. Ismail let us know that the case $\v=\w=1$ is interesting because in this situation we get a measure  interpolated between classical normal ($q=t=0$) and hyperbolic secant distribution ($q=t=1$) i.e. we have the relation ($Q_n^{(q,t)}:=Q_n^{(q,t,1,1)}$)
\begin{align*}
x Q_n^{(q,t)}(x) = Q_{n+1}^{(q,t)}(x) +[n]_{q,t}nQ_{n-1}^{(q,t)}(x), \qquad n=0,1,2,\dots.
\end{align*}
Now we will explain that the moment problem is determined in this situation. 
Probably the best known criterion (in this case) of Hamburger moment problem is due to Carleman \cite{Carleman,Shohat1943,Chihara}.  Carleman's theorem states that the moment problem is determined if 
$$\sum_{n\geq 1}\gamma_{n}^{-\frac{1}{2}}=\infty, $$
where $\gamma_n$ are Jacobi parameters from \eqref{wielortogonalnerekursia}. In this case we have 
$$\infty=\sum_{n\geq 1}\frac{1}{n}=\sum_{n\geq 1}\left(n^2\right)^{-\frac{1}{2}}\leq \sum_{n\geq 1}\left([n]_{q,t}n\right)^{-\frac{1}{2}} .$$
Hence, the moments uniquely determine the measure and we  can use the argument from the proof of Theorem \ref{rozkladgaussa} to conclude that $\G_{\xx}^{(q,t,1,1)} $ has the distribution $\mu_{q,t,1,1}$. 

In particular we show that  $\G_{\xx}^{(1,1,1,1)} $ has the 
hyperbolic  secant  distribution; see \eqref{recursionhiperbolic}.  
 A random variable follows a hyperbolic secant distribution if its probability density function can be related to the following standard form $\rho(dx)=\frac{1}{2\cosh(\pi x/2)}dx$. We note that the
characteristic function of this distribution is  $\frac{1}{\cosh(x)}$ i.e. the density and its characteristic function differ only by scale parameters (the normal distribution is the prime example for this phenomenon)  -- see \cite{Tsehaye2004,BozejkoHasebe2013}.
The  secant distribution  is infinitely divisible distribution generated by some
particular processes with stationary independent increments (L\'evy processes -- see \cite{Pitman2003}).
The moments $E_n=m_{2n}(\rho)$  are Euler numbers with positive signs (see \cite[eq. (8)]{Pitman2003} or \cite[Chapter 23]{Abramowitz})
$$(m_0(\rho),m_2(\rho),m_4(\rho),m_6(\rho),m_8(\rho),\dots)=(1, 1, 5, 61, 1385, 50521,\dots).$$

\noindent\textbf{Discrete $q$-Hermite I polynomials}. 
Important classes of orthogonal polynomials studied here are the continuous and the discrete
q-Hermite I polynomials, which are both special cases of the Al-Salam–Chihara polynomials.
We recover them for $q=\w$, $t=1$ and  $\v=0$; see \cite[(3.28.3)]{KLS10} or \cite[(1.6)]{BergIsmail1996} (after simple manipulation as in the proof of Proposition \ref{MeixnerPollaczekPolynomials}). The measure
is known to be discrete \cite[Pages  195-198]{Chihara1978}.

\subsection{Gaussian moment} In the proof below,    we also use additional notation and definitions.
  \begin{definition}
 Let  $\PS_{1,2}^{\O}(n)$ be a set of diagonal partitions of  $[n]\sqcup [\bar{n}]$  such that every block is a pair or a singleton, each block of $\pi$  is associated with $[n]$ or $[\bar{n}]$  and the  collections of openers of pair  coincide.  
 \end{definition}
We emphasize  that from the definition above it follows that for $\pi\in\PS_{1,2}^{\O}(n)$ the number of singletons in $\pi|_{[n]}$ is the same as in $\pi|_{[\bar n]}$, which we use in the proof, and if $a$ is a singleton in $\pi|_{[n]}$, then it does not mean that   $\bar a$ is a singleton in $\pi|_{[\bar n]}$;
For these partitions we define
 $$\MPairPair(\pi)=\{B\O\bar B\mid B \text{ is a pair and }\bar B \text{ is a conjugate pair}\}.$$
 \begin{example}\label{ex:extendedpart} For example 
 if $\pi=\{(1,3),(2)\}\sqcup \{(\bar 1 ,\bar 2),(\bar 3)\}$, then $\pi\in \PS_{1,2}^{\O}(3)$ and $\pi\notin \P_{1,2}^{\O}(3)$. In this case we also have $\MPairPair(\pi)=\{(1,3)\O(\bar 1 ,\bar 2)\}$ and on this set  $\MPair(\pi)$ is undefined because  singletons are not coincide. 
\end{example}

   For a given diagonal pair $\BL=(a,b)\O (\bar a,\bar c)$, we denote the left and right legs  of $\BL$ 
 by $\lb_\BL=a$, $\bar{\lb}_\BL=\bar{a}$, $\rb_\BL=b$ and $\bar{\rb}_\BL=\bar{c}$. 
\begin{theorem}\label{momentymieszane}
Suppose that  $\xi_{i}\O \xi_{\bar i} \in  \HH_{\R}$, $i\in\{1,\dots,n\}$, then
\begin{align} \label{eq:multipair}
\state( \G_{\xi_{1}\O \xi_{\bar 1}}\cdots \G_{\xi_{n}\O \xi_{\bar n}})=\sum_{\pi\in\P_{2}^{\O }(n)}  \ProdQ\prod_{{{\BL \in \MPair(\pi)}}}
\nawiasl  \xi_{\lb_\BL}\O \xi_{\bar \lb_\BL}, \xi_{ \rb_\BL}\O \xi_{\bar \rb_\BL}\nawiasp
\end{align}
\end{theorem}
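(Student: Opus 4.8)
The plan is to mimic the classical Wick-formula argument for $q$-Gaussian operators, working over the tensor-product structure $\HH = H \O \bar H$. First I would expand the product $\G_{\xi_1\O\xi_{\bar1}}\cdots\G_{\xi_n\O\xi_{\bar n}}$ applied to the vacuum $\Omega\O\bar\Omega$ by writing each $\G_{\xi_i\O\xi_{\bar i}} = \B^\ast_{\xi_i\O\xi_{\bar i}} + \B_{\xi_i\O\xi_{\bar i}}$ and distributing. Since $\B^\ast_{\xi\O\eta} = a^\ast_{q,t}(\xi)\O a^\ast_{\v,\w}(\eta)$ and $\B_{\xi\O\eta} = a_{q,t}(\xi)\O a_{\v,\w}(\eta)$ both factor through the tensor product $\O$, the whole word factors as a sum of words in the $a^{(*)}_{q,t}$ acting on $H^{\otimes\bullet}$ tensored (over $\O$) with the corresponding word in the $a^{(*)}_{\v,\w}$ acting on $\bar H^{\otimes\bullet}$ — the \emph{same} choice of creator/annihilator pattern on both sides, because each $\G$ contributes a single symbol. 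Taking the vacuum expectation, a nonzero contribution requires the number of creators and annihilators to match and the resulting scalar is a product of a $[n]_{q,t}$-type sum over the positive part and a $[n]_{\v,\w}$-type sum over the negative part.

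The key bookkeeping step is to identify which $2$-colourings of the word (creator/annihilator assignments together with the choice of which tensor leg each annihilator acts on) survive and to match them with diagonal pair partitions $\pi\in\P_2^{\O}(n)$. Here I would invoke the known $(q,t)$-Wick calculus: for the single-leg word on $H^{\otimes\bullet}$, the standard computation (as in Blitvić, generalizing Bożejko–Speicher) shows that $\state_{q,t}(\G^{q,t}_{\xi_1}\cdots\G^{q,t}_{\xi_n}) = \sum_{\sigma\in\P_2(n)} q^{\Cr(\sigma)}t^{\Cov(\sigma)}\prod_{\text{arcs}}\langle\xi_i,\xi_j\rangle$, the exponent of $q$ counting crossings and that of $t$ counting nestings of the pair partition $\sigma$ recording which creator is annihilated by which annihilator. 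The crucial point is that the pairing $\sigma_{[n]}$ on the positive indices and the pairing $\sigma_{[\bar n]}$ on the negative indices are \emph{forced to be the same} abstract pairing, since the creator/annihilator pattern of the full word $\G_{\xi_1\O\xi_{\bar1}}\cdots$ is common to both factors: the $k$-th symbol is a creator on the $H$-side iff it is a creator on the $\bar H$-side. This is exactly the defining diagonality of $\P_2^{\O}(n)$ (conditions I–IV of Definition \ref{def:partycji}, with condition III — equality of left legs of arcs — coming precisely from the common opener/closer structure, cf. Remark \ref{rem:partycje}(5)). For such a diagonal $\pi$, each diagonal block $\BL = (l_\BL, r_\BL)\O(\bar l_\BL,\bar r_\BL)\in\MPair(\pi)$ contributes the inner product $\langle\xi_{l_\BL}\O\xi_{\bar l_\BL},\xi_{r_\BL}\O\xi_{\bar r_\BL}\rangle_{\HH} = \langle\xi_{l_\BL},\xi_{r_\BL}\rangle_H\langle\xi_{\bar l_\BL},\xi_{\bar r_\BL}\rangle_{\bar H}$, and the scalar weights multiply to $q^{\Cr(\pi_{[n]})}t^{\Cov(\pi_{[n]})}\v^{\Cr(\pi_{[\bar n]})}\w^{\Cov(\pi_{[\bar n]})}$, which is the prefactor $\ProdQ$ in the statement.

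The main obstacle — and the part that needs care rather than just citation — is the precise combinatorial translation of the inductive annihilation procedure into the crossing/nesting statistics on the \emph{negative} indices, and verifying that the map from surviving contractions to $\P_2^{\O}(n)$ is a bijection onto $\P_2^{\O}(n)$ and not some larger or smaller set. Concretely, I would argue by induction on $n$: apply the rightmost factor to an $n-1$-fold tensor, commute annihilators past creators using the relation \eqref{commutationrelations} on each side of $\O$, and track how a new arc is opened or closed; the inductive hypothesis supplies the statistics for the shorter word, and one checks that the newly created crossings/nestings on the $[\bar n]$-part are governed by the $\v$- and $\w$-powers $q^{i-1}t^{n-i}$, $\v^{j-1}\w^{\bar n - j}$ appearing in the explicit formula for $\B_{\xx}$ in the Example after the definition of $\B$. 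The diagonality constraint is what makes the two inductions run in lockstep: because the positive and negative annihilation indices $i$ and $j$ must point to conjugate legs for the contribution to be nonzero under $\state$ (the vacuum survives only when both sides are fully contracted compatibly), one gets $j$ determined by $i$ through the conjugate-block correspondence, and $\P_2^{\O}(n)$ is exactly the index set that results. I would then conclude by matching terms, noting that when $n$ is odd both sides vanish (no pair partitions exist), completing the proof.
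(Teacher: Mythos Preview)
Your factorization strategy is sound and in fact yields a cleaner route than the paper's, but your execution contains a genuine error in the combinatorics that would produce the wrong index set.

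You write that ``the pairing $\sigma_{[n]}$ on the positive indices and the pairing $\sigma_{[\bar n]}$ on the negative indices are \emph{forced to be the same} abstract pairing'' and later that ``one gets $j$ determined by $i$ through the conjugate-block correspondence.'' Both claims are false. The annihilator $\B_{\xi\O\eta}=a_{q,t}(\xi)\O a_{\v,\w}(\eta)$ acts with \emph{independent} indices on the two legs: look again at the explicit formula for $\B_{\xx}$, which is a double sum $\sum_{i,j\in[n]}q^{i-1}t^{n-i}\v^{j-1}\w^{n-j}\cdots$ with no coupling between $i$ and $j$. What the common creator/annihilator pattern $\epsilon$ forces is only that $\pi_{[n]}$ and $\pi_{[\bar n]}$ share the same set of \emph{openers} (and hence closers) --- exactly the content of Remark~\ref{rem:partycje}(5). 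The right legs of the arcs can, and generically do, differ: see Figure~\ref{fig:FiguraExemple1}(a), where $\pi_{[n]}=\{(1,4),(2,6),(3,5)\}$ but $\pi_{[\bar n]}=\{(\bar1,\bar6),(\bar2,\bar5),(\bar3,\bar4)\}$. If your argument were correct the sum would range over the diagonal copy of $\P_2(n)$ and for $n=4$ would have $3$ terms; but $\#\P_2^{\O}(4)=5$ (the Euler number $E_2$, cf.\ Corollary~\ref{cor:euler}).

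The fix is simple and actually makes your approach work nicely: after factoring $\state(\B^{\epsilon(1)}\cdots\B^{\epsilon(n)})$ as a product of two $(q,t)$- and $(\v,\w)$-vacuum expectations with the \emph{same} $\epsilon$, apply the known single-leg Wick formula to each factor separately. Each factor is a sum over pair partitions compatible with $\epsilon$; the product of the two sums is a sum over \emph{pairs} $(\sigma,\tau)$ of pair partitions sharing the opener set determined by $\epsilon$. Summing then over all $\epsilon$ gives precisely $\P_2^{\O}(n)$, and the inner products recombine into $\langle\xi_{l_\BL}\O\xi_{\bar l_\BL},\xi_{r_\BL}\O\xi_{\bar r_\BL}\rangle_\HH$ block by block. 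This is arguably more transparent than the paper's proof, which instead runs a direct induction on $n$ over an auxiliary class $\PS_{1,2}^{\O}(n)$ of pair-or-singleton partitions (Definition preceding Theorem~\ref{momentymieszane}), tracking four extra statistics $\InS$ and $\SL$ that count covered singletons and singletons to the right, and only at the end specializes to the singleton-free case. The paper's route is self-contained (it does not cite the $(q,t)$-Wick formula) and sets up machinery reused for the gauge-operator Theorem~\ref{thm2}; your corrected route is shorter but imports the single-leg result from \cite{Blitvic2012}.
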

\begin{remark}
Let $S=\{i_1,\dots,i_k\}$ such that $i_1<\dots <i_k$ be a finite subset of natural numbers and $\{T_i\}_{i\in\N}$ be the sequence of operators then
when we write 
$\prod_{i\in S}T_i $ we mean that $T_{i_1}\dots T_{i_k}.$
\end{remark}
\begin{proof}
Given
 $\epsilon=(\epsilon(1), \dots, \epsilon(n))\in\{1,\ast\}^{n}$, let
 $\PS_{1,2;\epsilon}^{\O }$ be the set of partitions $\pi\in
 \PS_{1,2}^{\O }$ such that
 \begin{itemize}
   \item if $(a,b)$ is a pair in
     $\pi|_{[n]}$ then $\epsilon(b)= \ast,$ $\epsilon(a)=1$,
        \item if $(\bar a,\bar b)$ is a pair in
     $\pi|_{[\bar n]}$ then $\epsilon(\bar b)= \ast,$ $\epsilon(\bar a)=1$,
     \item if $\{c\}$ is a singleton in $\pi$ then $\epsilon( |c|)=\ast$. 
   \end{itemize}

Let   $\epsilon(i)\in\{1,\ast\}$.  We will 
prove the result for $Z=\{1,\dots,n\}$ by induction
\begin{equation}\label{formula101}
\begin{split} 
\prod_{i\in{Z}}\B^{\epsilon(i)}_{\xi_{i}\O \xi_{\bar i}}\Omega\O \bar \Omega = &\sum_{\pi\in\PS_{1,2;\epsilon}^{\O }(Z)}{{{q}^{\Cr(\pi|_{Z})+\InS(\pi|_{Z})}}{{t}^{\Cov(\pi|_{Z})+\SL(\pi|_{Z})}}{{\v}^{\Cr(\pi|_{\bar Z})+\InS(\pi|_{\bar Z})}}{{\w}^{\Cov(\pi|_{\bar Z})+\SL(\pi|_{\bar Z})}}}
\\
& \prod_{{{\BL \in \MPairPair(\pi)}}}\nawiasl  \xi_{\lb_\BL}\O \xi_{\bar \lb_\BL}, \xi_{ \rb_\BL}\O \xi_{\bar \rb_\BL}\nawiasp
\times (\otimes_{i\in \Sing(\pi|_{Z})} \xi_{i})\O (\otimes_{i\in \Sing(\pi|_{\bar{Z}})} \xi_{i}),
\end{split}
\end{equation}
it is assumed to be true for $Z=\{2,\dots,n\}$. 
When $n=1$, $\B_{\xi_{1}\O \xi_{\bar 1}}\Omega\O\bar \Omega =0$ and $\B^\ast_{\xi_{1}\O \xi_{\bar 1}}\Omega\O\bar \Omega=\xi_{1}\O \xi_{\bar 1}$ and hence the formula is true. Suppose that the formula \eqref{formula101} is true for $Z=\{2,\dots,n\}$. 

 We will show that the action of $\B^{\epsilon(1)}_{\xi_{1}\O \xi_{\bar 1}}$ corresponds to the inductive pictorial description of set partitions. 
We fix $\pi\in \PS^{\O }_{1,2;\epsilon}(\{2,\dots,n\})$ and run the argument below over all partitions of this type. Suppose that 
\begin{itemize}
\item $\pi|_{{\{2,\dots,n\}}}$ has singletons  $s_{1}<\cdots <s_{p_1} <  \cdots <s_r$ and pair blocks $W_{1},\dots, W_{u_1}$ which cover $s_{p_1}$ and pairs $U_1,\dots, U_{l_1}$ to the left of $s_{p_1}$,
\item $\pi|_{{\{\bar 2,\dots,\bar n\}}}$ has singletons  $k_1<\cdots <k_{p_2} <  \cdots <k_r$ and pair blocks
 $\bar W_{1},\dots,\bar   W_{u_2}$ which cover $k_{p_2}$ and pairs $\bar U_1,\dots, \bar U_{l_2}$ to the left of $k_{p_2}$. 
\end{itemize}
  Note that when there is no singleton, the arguments below can be modified easily. 

Case 1. If $\epsilon(1)=\ast$, then the operator $\B^\ast_{\xi_{1}\O \xi_{\bar 1}}$ acts on the tensor product, putting $\xi_{1}\O \xi_{\bar 1}$ on the left. This operation pictorially corresponds to adding the singletons  as follows
$$\{s_1,\cdots,s_r\}\sqcup \{k_1,\cdots,k_r\}\mapsto \{1,s_1,\cdots,s_r\}\sqcup \{\bar 1, k_1,\cdots,k_r\}.$$ This yields a new  partition $\tilde{\pi}\in\PS^{\O}_{1,2;\epsilon}(n)$. This map $\pi\mapsto \tilde{\pi}$ does not change the numbers related to our statistic like $ \Cr(\cdot)$, which is compatible with the fact that the action of $\B^\ast(\xi_{1}\O \xi_{\bar 1})$ does not change the coefficient, hence the formula \eqref{formula101} holds when we moved form  $n-1$ to $n$ and $\epsilon(1)=\ast$.

Case 2. If $\epsilon(1)=1$, then $\B_{\xi_{1}\O \xi_{\bar 1}}$  acts on the tensor product, contributing to new $r^2$ terms. In the $(p_1, p_2)^{\rm th}$ term in the action we obtain the term $q^{p_1-1}\v^{p_2-1} t^{r-p_1}\w^{r-p_2} \langle \xi_{1},\xi_{s_{p_1}}\rangle_H \langle \xi_{\bar 1},\xi_{k_{p_2}}\rangle_{\bar H}$, with tensor product 
 $$(\xi_{s_1}\otimes 
 \cdots \otimes 
 \hat{\xi}_{s_{p_1}} \otimes \cdots\otimes \xi_{s_{r}})\O (\xi_{k_1}\otimes \cdots \otimes 
 \hat{\xi}_{k_{p_2}} \otimes \cdots \otimes \xi_{k_r}).$$
 Pictorially this corresponds to getting a set partition $\tilde{\pi} \in \PS^{\O }_{1,2;\epsilon}(n)$  by adding the most left singletons $1$ to set
$\{s_1,\cdots,s_r\}$ and   $\bar 1$ to the set
$\{k_1,\cdots,k_r\}$ and creating the diagonal pair $${\BL}=({1}, s_{p_1})\O (\bar 1, k_{p_2})\in \MPairPair(\tilde{\pi}),$$ with the same left legs  $1$ and $\bar 1$. 
Note also that 
$$\nawiasl \xi_{\lb_\BL}\O \xi_{\bar \lb_\BL}, \xi_{ \rb_\BL}\O \xi_{\bar \rb_\BL}\nawiasp= \langle \xi_{1},\xi_{s_{p_1}}\rangle_H \langle \xi_{\bar 1},\xi_{k_{p_2}}\rangle_{\bar H}.$$
The diagonal pair $\BL$ crosses the blocks $W_1, \dots, W_{u_1}, \bar W_1, \dots,\bar W_{u_2}$ and so increases the number of crossings by $u_1$ in $\pi|_{[n]}$ and by $u_2$ in $\pi|_{[\bar n]}$,  but decreases the number of inner singletons by $u_1$ and $u_2$. 
The diagonal pair $\BL$ covers the pairs $U_1,\dots, U_{l_1}, \bar U_1,\dots, \bar U_{l_2}$  so increases the nesting by $l_1$ in $\pi|_{[n]}$ and by $l_2$ in $\pi|_{[\bar n]}$  and create a new  $r-p_1$ right singletons in $\pi|_{[n]}$ and  $r-p_2$ in $\pi|_{[\bar n]}$ .  In the new situation $s_{p_1}$ and $k_{p_2}$ are not singletons, so the number of right singletons of $U_1,\dots,U_{l_1}$ and $\bar U_1,\dots,\bar U_{l_2}$  decreases by $l_1$ and $l_2$, respectively.
Now some new inner singletons $s_{1}, \dots, s_{p_{1}-1}, k_{1}, \dots, k_{p_{2}-1}$  appear. 
Altogether we have:
\begin{align*}  \begin{split}
&\Cr(\tilde{\pi}|_{[n]})=\Cr(\pi|_{{\{2,\dots,n\}}})+u_1 , \\
&\InS(\tilde{\pi}|_{[n]})=\InS(\pi|_{{\{2,\dots,n\}}})-u_1+p_1-1, \\
&\Cov(\tilde{\pi}|_{[n]})=\Cov(\pi|_{{\{2,\dots,n\}}})+l_1,\\ 
&\SL(\tilde{\pi}|_{[n]})=\SL(\pi|_{{\{2,\dots,n\}}})+r-p_1-l_1, \\ 
  \end{split}
  \begin{split}
&\Cr(\tilde{\pi}|_{[\bar n]})=\Cr(\pi|_{{\{\bar 2,\dots,\bar n\}}})+u_2, \\
&\InS(\tilde{\pi}|_{[\bar n]})=\InS(\pi|_{{\{\bar 2,\dots,\bar n\}}})-u_2+p_2-1, \\
&\Cov(\tilde{\pi}|_{[\bar n]})=\Cov(\pi|_{{\{\bar 2,\dots,\bar n\}}})+l_2, \\ 
&\SL(\tilde{\pi}|_{[\bar n]})=\SL(\pi|_{{\{\bar 2,\dots,\bar n\}}})+r-p_2-l_2. \\ 
  \end{split}
\end{align*}
So we see that the exponent of $q$'s,  $\v$'s, $t$'s and $\w$'s   increases by $q^{p_1-1}$, $\v^{p_2-1}$,  $t^{r-p_1}$ and  $\w^{n-p_2}$ respectively; see Figure \ref{fig:FiguraExemple2}.  
\begin{figure}[h]
\begin{center}
\MMatching{10}{1/7, 2/4,3/5,8/10}{10}{1/6, 2/5, 3/7,8/9}
\end{center} 
\caption{The main structure of partition $\tilde{\pi}\in \PS^{\otimes_{ \scriptscriptstyle\S}}_{1,2}(n)$ in the induction step.    }
\label{fig:FiguraExemple2}
\end{figure}
Note that as $\pi$ runs over $\PS^{\O }_{1,2;(\epsilon(2),\dots,\epsilon(n))}(n-1)$, every set partition $\tilde{\pi} \in \PS^{\O }_{1,2;(\epsilon(1),\dots,\epsilon(n))}(n)$ appears exactly once either in   Case 1 or in Case 2 as shown by induction that the formula \eqref{formula101} holds for all $n \in \N$. 
Finally, formula \eqref{eq:multipair} follows from \eqref{formula101} by taking the sum over all $\epsilon$ such that $\Sing(\pi)=\emptyset$ (in this case we understand that  $(\otimes_{i\in \Sing(\pi|_{Z})} \xi_{i})\O (\otimes_{i\in \Sing(\pi|_{\bar{Z}})} \xi_{i})=\Omega\O \bar \Omega $) and applying the state action, because then $\PS^{\O }_2(n)=\P^{\O }_2(n)$ and $\MPairPair(\pi)=\MPair(\pi)$.
\end{proof}

\begin{corollary}\label{cor:euler}
It is interesting to compare the moment of $\G^{(1,1,1,1)}_{\xi\O \eta}$, where $\|\xx\|=1$, with  Euler numbers $E_n$ with positive signs (see moment of hyperbolic secant distribution in  Subsection \ref{subsec:intersingpolynomial}) because it provides a  new  combinatorial interpretation of these  numbers:
\begin{align*} 
E_n=\#\P_{2}^{\O }(2n).  
\end{align*}
These numbers  also occur in combinatorics, specifically when counting the number of alternating permutations of a set with an even number of elements and it is not clear why they are the same. 
\end{corollary}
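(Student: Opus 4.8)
The plan is to deduce the identity $E_n = \#\P_2^{\O}(2n)$ directly from Theorem \ref{momentymieszane} together with the computation of the moments of the hyperbolic secant distribution recorded in Subsection \ref{subsec:intersingpolynomial}. First I would specialize Theorem \ref{momentymieszane} to the case $q=t=\v=\w=1$, taking all input vectors equal to a fixed unit vector $\xx = \xi\O\eta \in \HH_\R$ with $\|\xx\|=1$, so that each factor $\nawiasl \xi_{\lb_\BL}\O\xi_{\bar\lb_\BL}, \xi_{\rb_\BL}\O\xi_{\bar\rb_\BL}\nawiasp$ equals $\|\xx\|^2 = 1$ and each monomial $\ProdQ$ in the four parameters collapses to $1$. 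Hence the right-hand side of \eqref{eq:multipair} becomes simply the cardinality of the index set, namely $\#\P_2^{\O}(2n)$ (the odd moments vanish since $\P_2^{\O}(2n+1)=\emptyset$, there being no pair partition of an odd set). On the left-hand side we obtain $\state(\G_{\xx}^{2n})$, the $2n$-th moment of the quadrabasic Gaussian operator $\G_{\xx}^{(1,1,1,1)}$ with respect to the vacuum state.

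Next I would identify this moment with $E_n$. By the discussion under ``Hyperbolic secant'' in Subsection \ref{subsec:intersingpolynomial}, when $\v=\w=1$ the operator $\G_{\xx}^{(q,t,1,1)}$ has distribution $\mu_{q,t,1,1}$, whose orthogonal polynomials obey $xQ_n^{(q,t)}(x)=Q_{n+1}^{(q,t)}(x)+[n]_{q,t}\,n\,Q_{n-1}^{(q,t)}(x)$; at $q=t=1$ this is the recurrence \eqref{recursionhiperbolic} with $\gamma_{n-1}=n^2$, which is exactly the recurrence orthogonalizing the (standard) hyperbolic secant distribution $\rho$. The text already verifies that Carleman's criterion $\sum_n \gamma_n^{-1/2}\ge\sum_n 1/n = \infty$ holds, so the moment problem is determined and therefore $\mu_{1,1,1,1}=\rho$; consequently $\state(\G_{\xx}^{(1,1,1,1)\,2n})=m_{2n}(\rho)=E_n$, the $n$-th Euler number with positive sign, as listed there. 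Combining the two computations of $\state(\G_{\xx}^{2n})$ yields $E_n = \#\P_2^{\O}(2n)$, which is the claim.

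The combinatorial remark about alternating permutations is a known fact (the secant numbers count down-up alternating permutations of $\{1,\dots,2n\}$) and needs no proof here; it is offered only as context. The main obstacle, such as it is, is purely bookkeeping: one must make sure that at $q=t=\v=\w=1$ \emph{every} monomial $\ProdQ$ over a partition in $\P_2^{\O}(2n)$ genuinely equals $1$ (not merely that the base is $1$ — the exponents $\Cr,\Cov$ on both the $[n]$ and $[\bar n]$ sides are finite nonnegative integers, so $1$ raised to them is $1$), and that the product over $\MPair(\pi)$ of the inner products is $1$ because $\|\xx\|=1$ forces $\nawiasl\xi\O\eta,\xi\O\eta\nawiasp=\langle\xi,\xi\rangle_H\langle\eta,\eta\rangle_{\bar H}=\|\xi\|^2\|\eta\|^2=\|\xx\|^2=1$. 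Once these trivialities are checked, the corollary is immediate; no genuinely new argument is required beyond invoking Theorem \ref{momentymieszane} and the determinacy of the hyperbolic secant moment problem.
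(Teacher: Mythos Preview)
Your proposal is correct and matches the paper's intended argument exactly: the corollary is stated without proof precisely because it follows by specializing Theorem~\ref{momentymieszane} at $q=t=\v=\w=1$ with all vectors equal to a unit $\xx$, and then identifying the resulting moment with $E_n$ via the hyperbolic secant discussion in Subsection~\ref{subsec:intersingpolynomial}. Your bookkeeping checks (that $\ProdQ=1$ and each inner-product factor equals $1$) are the only things one has to note, and you have noted them.
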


\subsection{Trace}
Let $\A$ be the von Neumann algebra generated by $\{\G_{\xx}\mid \xx\in \HH_\R\}$ acting on the completion of $\F$ with respect to the inner product $\langle\cdot,\cdot\rangle_{\q}$.

\begin{proposition}  Suppose that $dim(H_\R)\geq 2$, $dim(\bar H_\R)\geq 2$ and $q,\v\in (-1,1)$. Then the vacuum
state is a trace on $\A$ if and only if $q=\v=0$ and $t=\w=1.$
\end{proposition}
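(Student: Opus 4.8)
The plan is to verify the "if" direction by direct computation and then to obtain the "only if" direction by exhibiting, for each range of parameters outside $q=\v=0$, $t=\w=1$, a pair of words in the generators on which the trace property fails. For the "if" direction, when $q=\v=0$ and $t=\w=1$ the quadrabasic Fock space is the full Fock space $\bigoplus_n H^{\otimes n}\otimes\bar H^{\otimes n}$ with its undeformed inner product, and $\G_{\xx}=\B_{\xx}+\B^\ast_{\xx}$ with $\B_{\xx}$ a "diagonal" free-type annihilation operator; here I would check directly that $\state(\G_{\xi_1\O\eta_1}\cdots\G_{\xi_k\O\eta_k})$ is invariant under cyclic rotation of the arguments. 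Concretely, from Theorem \ref{momentymieszane} the mixed moment equals $\sum_{\pi\in\NC^{\O}(k)}\prod_{\BL}\nawiasl\xi_{\lb_\BL}\O\xi_{\bar\lb_\BL},\xi_{\rb_\BL}\O\xi_{\bar\rb_\BL}\nawiasp$ (all the deformation exponents are $0$ or irrelevant since $q=\v=0$, $t=\w=1$ forces crossings to vanish and nestings/singleton statistics to contribute trivially), and noncrossing diagonal pair partitions of a cycle behave exactly as in the free case; the classical argument that the free (semicircular) trace is cyclic — pairing the last point, using that its partner is forced, and re-indexing — goes through verbatim because the pairing structure is governed entirely by the positive part $[n]$, the negative part being slaved to it via Definition \ref{def:partycji}.

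For the "only if" direction I would split into the cases $t\neq 1$ (or $\w\neq 1$) and $t=\w=1$ but $(q,\v)\neq(0,0)$. In each case it suffices to find vectors and a short word violating cyclicity, and here the product structure $\FQ\cong\FF\otimes\mathcal F_{\v,\w}(\bar H)$-style factorization is the key simplification: a mixed moment of $\G$'s factors as a product of a $(q,t)$-moment in the $H$-variables and a $(\v,\w)$-moment in the $\bar H$-variables (this is exactly the content of the inner product $\langle\cdot,\cdot\rangle_{\q}=\langle\cdot,\cdot\rangle_{q,t}\langle\cdot,\cdot\rangle_{\v,\w}$ and of the Wick formula $\ProdQ$). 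Therefore non-traciality of the scalar $(q,t)$-Gaussian system — which is classical: the $(q,t)$-Fock space vacuum state is a trace iff $q=0,t=1$, shown e.g. by evaluating $\state(\G_\xi\G_\zeta\G_\xi\G_\zeta)$ against $\state(\G_\xi^2\G_\zeta^2)$ for orthonormal $\xi\perp\zeta$ in a $2$-dimensional $H_\R$ — transports to non-traciality of $\A$ by choosing the $\bar H$-data generically (e.g. all $\eta_i$ equal to a fixed unit vector, or mutually orthogonal, whichever keeps the $\bar H$-factor cyclically symmetric and nonzero), so that the failure in the $H$-factor is not cancelled. The hypotheses $\dim H_\R\geq2$ and $\dim\bar H_\R\geq2$ are used precisely to have room for these orthogonal test vectors.

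The main obstacle is the bookkeeping in the "only if" direction: one must make sure the chosen test word is genuinely asymmetric under cyclic rotation in the deformed factor while the other factor contributes a nonzero cyclically-symmetric constant, so that no accidental cancellation occurs — this requires computing two honest $4$-th moments (e.g. $\state(\G_{\xi_1\O\eta}\G_{\xi_2\O\eta}\G_{\xi_1\O\eta}\G_{\xi_2\O\eta})$ versus its cyclic shift) via \eqref{eq:multipair} and checking that their difference is a nonzero polynomial in $q,t,\v,\w$ vanishing exactly on $\{q=\v=0,\ t=\w=1\}$. I expect the difference, after factoring, to be proportional to something like $\bigl((1-t)+\text{terms in }q\bigr)\cdot(\text{nonzero }\bar H\text{-factor})$ plus the symmetric counterpart in $(\v,\w)$, so that it vanishes iff both $(q,t)=(0,1)$ and $(\v,\w)=(0,1)$; identifying the exact factorization and confirming it is not identically zero on the admissible region $\abs q\le t\le1$, $\abs\v\le\w\le1$ is the one computation that has to be carried out carefully rather than quoted.
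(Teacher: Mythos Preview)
Your ``if'' direction is fine and matches the paper: once $q=\v=0$ and $t=\w=1$, only noncrossing diagonal partitions survive, and by Remark~\ref{rem:partycje}(2) these coincide with ordinary noncrossing pair partitions of $[n]$, so the moment formula reduces to the free semicircular one, which is tracial.

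The ``only if'' direction has a genuine gap. Your key simplification---that a mixed moment of the $\G$'s \emph{factors} as a $(q,t)$-moment in the $H$-variables times a $(\v,\w)$-moment in the $\bar H$-variables---is false. The Wick sum in \eqref{eq:multipair} runs over $\P_2^{\O}(n)$, not over $\P_2(n)\times\P_2(\bar n)$: the two pair partitions are coupled through the condition that their openers coincide. Concretely, with $\xi_1=\xi_2=e_1\perp e_2=\xi_3=\xi_4$ and all $\bar H$-vectors equal to a unit $\eta$, the difference $\state(\G_1\G_2\G_3\G_4)-\state(\G_2\G_3\G_4\G_1)$ is $1-t\v-t\w$, \emph{not} $(1-t)\cdot(\text{cyclically symmetric }\bar H\text{-factor})$ as you expect. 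So $(q,t)$-nontraciality does not simply ``transport'' to the quadrabasic setting, and a single fourth-moment comparison does not isolate the condition $(q,t)=(0,1)$.

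The paper resolves this by making \emph{two} separate choices of test vectors in the fourth moment: first with all $\bar H$-vectors equal (yielding $1-t\v-t\w=0$), then with orthogonal $\bar H$-vectors $\xi_{\bar 1}=\xi_{\bar 2}\perp\xi_{\bar 3}=\xi_{\bar 4}$ (yielding $1-t\w=0$). Together these force $t=\w=1$ and $\v=0$; symmetry in $H\leftrightarrow\bar H$ then gives $q=0$. Your plan needs this second comparison---which is exactly where $\dim\bar H_\R\ge 2$ is used---and your current write-up does not account for it.
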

\begin{proof} By using Theorem \ref{momentymieszane}, we obtain 
\begin{align*}
&\state( \G_{\xi_1\O\xi_{\bar 1}}\G_{\xi_2\O\xi_{\bar 2}}\G_{\xi_3\O\xi_{\bar 3}}\G_{\xi_4\O\xi_{\bar 4}})=\langle\xi_1, \xi_2\rangle_H
\langle\xi_3, \xi_4\rangle_H\langle\xi_{\bar 1},\xi_{\bar 2}\rangle_{\bar H}\langle\xi_{\bar 3}, \xi_{\bar 4}\rangle_{\bar H}
\\&+qv\langle\xi_1, \xi_3\rangle_H 
\langle\xi_2, \xi_4\rangle_H\langle\xi_{\bar 1},\xi_{\bar 3}\rangle_{\bar H}\langle\xi_{\bar 2}, \xi_{\bar 4}\rangle_{\bar H}
+q\w\langle\xi_1, \xi_3\rangle_H 
\langle\xi_2, \xi_4\rangle_H\langle\xi_{\bar 1},\xi_{\bar 4}\rangle_{\bar H}\langle\xi_{\bar 2}, \xi_{\bar 3}\rangle_{\bar H}\\&+t\v\langle\xi_1, \xi_4\rangle_H 
\langle\xi_2, \xi_3\rangle_H\langle\xi_{\bar 1},\xi_{\bar 3}\rangle_{\bar H}\langle\xi_{\bar 2}, \xi_{\bar 4}\rangle_{\bar H}+t\w\langle\xi_1, \xi_4\rangle_H
\langle\xi_2, \xi_3\rangle_H\langle\xi_{\bar 1},\xi_{\bar 4}\rangle_{\bar H}\langle\xi_{\bar 2}, \xi_{\bar 3}\rangle_{\bar H}
\intertext{and by permuting
}
&\state( \G_{\xi_2\O\xi_{\bar 2}}\G_{\xi_3\O\xi_{\bar 3}}\G_{\xi_4\O\xi_{\bar 4}}\G_{\xi_1\O\xi_{\bar 1}})=\langle\xi_2, \xi_3\rangle_H
\langle\xi_4, \xi_1\rangle_H\langle\xi_{\bar 2},\xi_{\bar 3}\rangle_{\bar H}\langle\xi_{\bar 4}, \xi_{\bar 1}\rangle_{\bar H}
\\&+qv\langle\xi_2, \xi_4\rangle_H 
\langle\xi_3, \xi_1\rangle_H\langle\xi_{\bar 2},\xi_{\bar 4}\rangle_{\bar H}\langle\xi_{\bar 3}, \xi_{\bar 1}\rangle_{\bar H}
+q\w\langle\xi_2, \xi_4\rangle_H 
\langle\xi_3, \xi_1\rangle_H\langle\xi_{\bar 2},\xi_{\bar 1}\rangle_{\bar H}\langle\xi_{\bar 3}, \xi_{\bar 4}\rangle_{\bar H}\\&+t\v\langle\xi_2, \xi_1\rangle_H 
\langle\xi_3, \xi_4\rangle_H\langle\xi_{\bar 2},\xi_{\bar 4}\rangle_{\bar H}\langle\xi_{\bar 3}, \xi_{\bar 1}\rangle_{\bar H}+t\w\langle\xi_2, \xi_1\rangle_H
\langle\xi_3, \xi_4\rangle_H\langle\xi_{\bar 2},\xi_{\bar 1}\rangle_{\bar H}\langle\xi_{\bar 3}, \xi_{\bar 4}\rangle_{\bar H}.
\end{align*}
Since  $dim(H_\R)\geq 2$, there are two orthogonal unit eigenvectors $e_1$, $e_2$ and we put $\xi_1=\xi_2=e_1$ and $\xi_3=\xi_4=e_2$. We also take $\xi_{\bar 1}= \xi_{\bar 2}=\xi_{\bar 3}=\xi_{\bar 4}=\eta\neq 0$ and so the difference is
$$\state( \G_{\xi_1\O\eta_1}\G_{\xi_2\O\eta_2}\G_{\xi_3\O\eta_3}\G_{\xi_4\O\eta_4})-\state( \G_{\xi_2\O\eta_2}\G_{\xi_3\O\eta_3}\G_{\xi_4\O\eta_4}\G_{\xi_1\O\eta_1})=(1-t\v-t\w)\|\eta\|^4.$$
Therefore, the traciality of the vacuum state implies $1-t\v-t\w=0.$ 
Now we  do the same analysis with orthogonal vectors $\xi_{\bar 1}= \xi_{\bar 2}=e_{\bar 1}$ and $\xi_{\bar 3}=\xi_{\bar 4}=e_{\bar 2},$ and obtain $1-t\w=0$, which by the restriction $|t|,|w|\leq 1$  implies that $t=\w=1$. If we substitute this into the first equation, we see that  $\v=0$.  It is clear by symmetry that using the same argument as above, we will show that $\state$ is not a trace when $q\neq 0$. 
When $q=\v=0$ and $t=\w=1$, we get
\begin{align*}
\state( G^{(0,1,0,1)}_{\xi_{1}\O \xi_{\bar 1}}\cdots G^{(0,1,0,1)}_{\xi_{n}\O \xi_{\bar n}})&= \sum_{\pi\in\NC_{2}^{\O }(n)} \prod_{{{\BL \in \MPair(\pi)}}}\nawiasl  \xi_{\lb_\BL}\O \xi_{\bar \lb_\BL}, \xi_{ \rb_\BL}\O \xi_{\bar \rb_\BL}\nawiasp
\intertext{by Remark \ref{rem:partycje} (2), we have }
&= \sum_{\pi\in\NC_{2}(n)} \prod_{{{(i,j) \in \pi}}} \nawiasl  \xi_{i}\O \xi_{\bar i}, \xi_{ j}\O \xi_{\bar j}\nawiasp
\end{align*}
where the traciality is known because $\nawiasl  \xi_{i}\O \xi_{\bar i}, \xi_{ j}\O \xi_{\bar j}\nawiasp=\nawiasl \xi_{ j}\O \xi_{\bar j}, \xi_{i}\O \xi_{\bar i} \nawiasp$.

\end{proof}
\begin{remark} 
\begin{enumerate}
[(1).]
,\item The vacuum expectation $\state$ is a trace on $\A$ if 
$$(q, t, \v,\w) = (q, 1, 0, 1) \text{ and  }(dim(H_\R)=1\text{ or  }dim(\bar H_\R)=1).$$   
Indeed, if $\xi_{\bar i}=\xi$ and  $\|\xi\|_{\bar H}=1$ then  by Theorem \ref{momentymieszane} we have 
\begin{align*}
\state( \G_{\xi_{1}\O \xi_{\bar 1}}\cdots \G_{\xi_{n}\O \xi_{\bar n}})&=\sum_{\substack{ \pi\in\P_{2}^{\O }(n)\\ \Cr(\pi|_{{[\bar n]}})=0  }}  {q}^{\Cr(\pi|_{{[n]}})}\prod_{{{\BL \in \MPair(\pi)}}}
\nawiasl  \xi_{\lb_\BL}\O \xi_{\bar \lb_\BL}, \xi_{ \rb_\BL}\O \xi_{\bar \rb_\BL}\nawiasp
\intertext{Let $\pi\in\P_{2}(n)$.  There exists a unique non-crossing partition $\hat \pi\in\P_{2}(n)$, such that  the set of right and left legs of the  pairs of $\pi$ and $\hat \pi$ coincide -- see \cite[Page 215]{BozejkoGuta}.  In our situation this means that the set of all partitions 
$\pi \in \P_{2}^{\O }(n)$ such that $\pi|_{{[\bar n]}}$ is noncrosing is isomorphic to $\P_{2}(n)$.
 From this and  $\|\xi\|_{\bar H}=1$  we can deduce}
&=\sum_{\pi\in \P_{2}(n)}  {q}^{\Cr(\pi)}\prod_{{{(i,j) \in \pi}}}
\langle  \xi_{i},\xi_{j}\rangle_H,
\end{align*}
and in this case  the traciality is known -- see \cite{BozejkoSpeicher1994}. Unfortunately, this argument can not be applied under the assumption that $dim(H_\R)\geq 2\text{ and }dim(\bar H_\R)\geq 2$ (the reason is given in the next point). 
\item 
 At first glance the impression may be that there should  a trace, when $t=1$ and $\w=1$, because just then the crossing partition plays a role. 
 The essence of the problem is  that under the cyclic permutation action  of a partition of the form $\P^{\O }(n)$ is not a map to itself; see Figure \ref{fig:FiguraTrace} for specific example.
\begin{figure}[h]
\begin{center}
\Matching{4}{1/3, 2/4}{4}{1/4, 2/3}  \MatchingTrace{4}{1/3, 2/4}{4}{1/2, 3/4}
\end{center} 
\begin{center}
 \begin{tikzpicture}[scale=0.5]
        \node at (0,1) {A diagonal partition };
          \node at (14,1) {A non-diagonal partition};
  \end{tikzpicture}%
\end{center} 
\caption{A cyclic permutation action of $[n]$ and $[\bar n]$   }
\label{fig:FiguraTrace}
\end{figure}
\end{enumerate}
\end{remark}
\section{Poisson-type operators}

\subsection{Gauge operator}
Now  we  define differential second quantization operator on $\FQ$. 
In order to this, we introduce some special operators. 
Let $T$ and $\bar T$ be the operators on Hilbert spaces $H$ and $\bar{H}$ with dense domains ${D}$ and $\bar{{D}}$, respectively. We also assume  that $T({D}) \subset {D}$ and $\T(\bar{ {D}} )\subset \bar{ {D} }$ and  $\mc{D}:=D \O \bar{ {D}}$. 
The following gauge operator  is motivated by the papers  \cite{Ans01,Ejsmont1}. 
\begin{definition}
The  gauge operator $p_{\TG}$ is an operator on $\FQ$  defined by 
\begin{align}\label{rqT}
p_{\TG}&:= p_{T}^{(q,t)}\O  p^{(\v,\w)}_{\T}
\end{align}
with a dense domain $\FQ$.
\end{definition}
In this part let us recall the properties of the  gauge operator from \cite[Proposition 1.8]{Ejsmont1} 
 (the proof is almost identical  and it can be omitted).
\begin{proposition}\label{Prop:samosprzezone}
If $\TG$ is essentially self-adjoint on a dense domain $\mc{D}$, then $p_{\TG}$ is  essentially self-adjoint on a dense domain $\FQ$.
\end{proposition}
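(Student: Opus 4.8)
The plan is to reduce self-adjointness of the tensor product $p_{\TG}=p_T^{(q,t)}\O p_{\T}^{(\v,\w)}$ on $\FQ$ to self-adjointness of a tensor product of operators of the form $p_T^{(q,t)}$ on a (single) $(q,t)$-Fock space, and then to analyze that on each $n$-particle level. Recall from Remark in Section~\ref{sec2.1} that $\FQ$ may be identified, as a Hilbert space, with the completion of $\bigoplus_{n=0}^\infty H^{\otimes n}\otimes \bar H^{\otimes n}$ under $\langle\cdot,\cdot\rangle_{\q}$, and on the $n$-particle space this inner product is the tensor product $\langle\cdot,\cdot\rangle_{q,t}\otimes\langle\cdot,\cdot\rangle_{\v,\w}$. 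Accordingly $p_{\TG}$ decomposes as a direct sum over $n$ of operators $p_{T}^{(q,t),(n)}\otimes p_{\T}^{(\v,\w),(n)}$ acting on $(H^{\otimes n},\langle\cdot,\cdot\rangle_{q,t})\otimes(\bar H^{\otimes n},\langle\cdot,\cdot\rangle_{\v,\w})$, plus the zero operator on the vacuum.

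First I would record the symmetry already established in Section~\ref{seqqtgauge}: for bounded or merely symmetric $T$ one has $\langle p_T^{(q,t)}f,g\rangle_{q,t}=\langle f,p_{T^*}^{(q,t)}g\rangle_{q,t}$, hence if $T$ is symmetric on $\mc D$ then $p_T^{(q,t)}$ is symmetric on $\mc F_{q,t}(\mc D)$; the analogous statement holds for $p_{\T}^{(\v,\w)}$, and therefore $p_{\TG}$ is symmetric on $\FQ$. So the issue is essential self-adjointness, i.e. that the closure has deficiency indices $(0,0)$. Next I would invoke the spectral/functional-calculus description of $p_T^{(q,t)}$: since $p_T^{(q,t)}=p_0(T)R^{(n)}_{q,t}$ and, by the identity $\sigma_1\cdots\sigma_k=\bigl(\begin{smallmatrix}1&\cdots&k&\cdots&n\\ k&\cdots&k+1&\cdots&n\end{smallmatrix}\bigr)$ derived in the text, $p_T^{(q,t)}$ acts as $\sum_{i=1}^n q^{i-1}t^{n-i}T(\xi_i)\otimes\xi_1\otimes\cdots\hat\xi_i\cdots\otimes\xi_n$, it is conjugate (via the polar/isometry decomposition implicit in $P_{q,t}^{(n)}={R^{(n)}_{q,t}}^*(\id\otimes P_{q,t}^{(n-1)})$) to $T\otimes(\text{bounded, invertible for }q<t\text{ or with controlled kernel otherwise})$ tensored with identities. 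The key point I want to extract is: on each $n$-particle space $p_T^{(q,t),(n)}$ is essentially self-adjoint whenever $T$ is essentially self-adjoint on $\mc D$, because it is unitarily equivalent (after passing to the $(q,t)$-inner-product completion) to a finite sum/ampliation built from copies of $\overline T$, and a finite direct sum of essentially self-adjoint operators is essentially self-adjoint.

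With that in hand the argument for the product is a standard Nelson-type tensor-product statement: if $A$ is essentially self-adjoint on a domain $\mc D_A$ and $B$ on $\mc D_B$, and if moreover $A\otimes B$ (equivalently, the closure $\overline A\otimes\overline B$) is essentially self-adjoint on $\mc D_A\otimes_{\mathrm{alg}}\mc D_B$ — which holds here because $\TG=T\O\T$ is assumed essentially self-adjoint on $\mc D=D\O\bar D$, and $p_{T}^{(q,t)}\O p_{\T}^{(\v,\w)}$ is obtained from $T\O\T$ by the same ampliation as on each factor — then $p_{\TG}$ on $\FQ$, being the direct sum over $n$ of these pieces, is essentially self-adjoint. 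Concretely I would argue level by level: show $\overline{p_{\TG}}$ restricted to the $n$-particle space has dense range for $\pm i$ (equivalently $\ker(p_{\TG}^*\mp i)=0$), using that on that level $p_{\TG}$ is, up to a bounded invertible conjugation dictated by $R^{(n)}_{q,t}\otimes R^{(n)}_{\v,\w}$, a finite ampliation of $\overline{\TG}$, whose essential self-adjointness is the hypothesis; then sum over $n$.

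The main obstacle I expect is the $|q|=t$ or $|\v|=\w$ boundary case, where $P_{\qq}$ has a nontrivial kernel and one must first quotient before completing. There the "bounded invertible conjugation" by $R^{(n)}_{q,t}$ is only bounded and injective on the quotient, so I would need to check that passing to the quotient Hilbert space (the symmetric/antisymmetric Fock space, with $n!$-scaled inner product) intertwines $p_{\TG}$ with the corresponding classical differential second quantization of $\overline{\TG}$ — whose essential self-adjointness on the algebraic symmetric/antisymmetric tensors over $\mc D$ is again the classical Nelson tensor lemma. The other technical point to handle carefully is the unboundedness when $q=t=1$ (or $\v=\w=1$): there $p_T^{(q,t),(n)}$ is unbounded even for bounded $T$, so one genuinely needs essential self-adjointness, not mere boundedness, of each level, and the domain $\FQ$ (finite sums) must be shown to be a core — this follows because finite-particle-number subspaces reduce the operator and each is essentially self-adjoint on its algebraic part, so the algebraic direct sum is a core for the full closure.
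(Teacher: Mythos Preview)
Your approach diverges from the paper's, and the central structural claim on which it rests does not hold. You assert that on each $n$-particle level $p_T^{(q,t),(n)}=p_0(T)R^{(n)}_{q,t}$ is ``unitarily equivalent (after passing to the $(q,t)$-inner-product completion) to a finite sum/ampliation built from copies of $\overline T$'', and hence essentially self-adjoint whenever $T$ is. But $p_T^{(q,t),(n)}$ is not of that form: already for $n=2$ it sends $\xi_1\otimes\xi_2$ to $t\,T\xi_1\otimes\xi_2+q\,T\xi_2\otimes\xi_1$, which is a genuine twisted sum involving the cyclic shifts $\sigma_1\cdots\sigma_k$, not a direct sum of amplifications of $T$. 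The decomposition $P_{q,t}^{(n)}={R^{(n)}_{q,t}}^*(\id\otimes P_{q,t}^{(n-1)})$ does not produce any polar or isometry factorisation that conjugates $p_T^{(q,t),(n)}$ to $T\otimes(\text{bounded})$; it only gives the symmetry computation. In general the product of an essentially self-adjoint operator with a bounded (non-scalar) one is not essentially self-adjoint, so without a concrete conjugation your level-by-level step simply does not go through. Consequently the ``Nelson-type tensor-product'' statement you then invoke has no verified hypothesis on the first factor.

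The paper's argument bypasses all of this structure and proceeds by analytic vectors. One takes the spectral measures $E^{(1)},E^{(2)}$ of $\overline T,\overline\T$, restricts to vectors $\xi_i\O\xi_{\bar i}$ lying in bounded spectral intervals $E^{(1)}_{[-M_T,M_T]}H\cap D$ and $E^{(2)}_{[-M_\T,M_\T]}\bar H\cap\bar D$, and uses the elementary bound $\|R^{(n)}_{q,t}\|_{0,0}\le[n]_{|q|,t}\le n$ together with $\|T\xi_i\|\le M_T\|\xi_i\|$ to obtain
\[
\|p_{\TG}^{\,k}(\vec\xi_n\O\vec\xi_{\bar n})\|_{\q}\le n!^{\,2}\,n^{2k}M_T^{\,k}M_\T^{\,k}\,\|\vec\xi_n\O\vec\xi_{\bar n}\|_{0,0,0,0}.
\]
This grows at most geometrically in $k$ (for fixed $n$), so every such tensor is an analytic vector for $p_{\TG}$; their span is dense in $\FD$ and invariant, and Nelson's analytic vector theorem gives essential self-adjointness directly. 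No structural description of $p_T^{(q,t),(n)}$ is needed. If you want to rescue your strategy, you would have to supply an explicit bounded invertible intertwiner between $p_T^{(q,t),(n)}$ in the $(q,t)$-inner product and an honest ampliation of $\overline T$; I do not believe one exists in general.
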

Directly from  Proposition \ref{seqqtgauge}  we can state the following proposition. 
\begin{proposition} If $T$ and $\T$  are bounded operators on $\HH$,  then $p_{\TG}$ is a bounded operator on the $\FQ$.
\label{Ansh+}
\end{proposition}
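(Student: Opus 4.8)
The plan is to reduce the statement to the single-variable boundedness result of Proposition~\ref{seqqtgauge} using the product structure of the quadrabasic Fock space. Recall that $\FQ$ is the completed orthogonal direct sum of its $n$-particle subspaces $\H=H^{\otimes n}\O\bar H^{\otimes n}$, that the gauge operator acts layerwise as $p_{\TG}=p_{T}^{(q,t)}\O p^{(\v,\w)}_{\T}$, and that on the $n$-particle subspace the deformed inner product factorizes,
\[
\langle \x\O\y,\xeta_m\O\yeta_{\bar m}\rangle_{\q}=\delta_{n,m}\,\langle \x,\xeta_m\rangle_{q,t}\,\langle \y,\yeta_{\bar m}\rangle_{\v,\w}.
\]
Consequently, after passing to the quotient by the null vectors of the seminorm (needed only when $\abs q=t$ or $\abs\v=\w$) and completing, the $n$-particle subspace of $\FQ$ is the Hilbert-space tensor product of the corresponding $n$-particle subspaces of $\mc{F}_{q,t}(H)$ and $\mc{F}_{\v,\w}(\bar H)$, and $p_{\TG}$ becomes, on that tensor product, the genuine tensor product $p_{T}^{(q,t)}\otimes p^{(\v,\w)}_{\T}$ of the two single-variable gauge operators.

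Granting this identification, the rest is a short estimate: by the elementary fact that $\|A\otimes B\|=\|A\|\,\|B\|$ for bounded operators on a Hilbert-space tensor product, one gets, for every $v\in\H$,
\[
\| p_{\TG}\, v \|_{\q} \le \| p_{T}^{(q,t)} \|_{q,t}\, \| p^{(\v,\w)}_{\T} \|_{\v,\w}\, \| v \|_{\q},
\]
where the two norms on the right are the operator norms on $\mc{F}_{q,t}(H)$ and $\mc{F}_{\v,\w}(\bar H)$ respectively. Since $T$ and $\T$ are bounded, Proposition~\ref{seqqtgauge} bounds each of these by a finite constant, and --- this is what makes the estimate uniform --- the bounds it records (of the shape $\sqrt{(t^{\hat n+1}-\abs{q}^{\hat n+1})/(t-\abs q)}\,\|T\|$, and the like) do not depend on the particle number. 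Taking the supremum over all layers gives $\|p_{\TG}\|_{\q}\le \| p_{T}^{(q,t)} \|_{q,t}\,\| p^{(\v,\w)}_{\T} \|_{\v,\w}<\infty$, so $p_{\TG}$ extends by continuity from $\F$ to a bounded operator on $\FQ$; that it descends to the quotient in the semidefinite cases is automatic, since the displayed seminorm bound forces $p_{\TG}$ to send null vectors to null vectors.

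The argument is essentially routine; the one point one must not overlook is that the estimate has to be uniform in the particle number, which is exactly why the particular $n$-independent form of the constants in Proposition~\ref{seqqtgauge} is used. There is no genuine coupling between the two deformations, since both the inner product and the operator split as honest tensor products on each homogeneous component, so no analytic input beyond Proposition~\ref{seqqtgauge} is required; this is why the statement can be read off directly from it.
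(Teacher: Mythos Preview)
Your proof is correct and follows exactly the route the paper intends: the paper's own ``proof'' of this proposition is the single sentence ``Directly from Proposition~\ref{seqqtgauge} we can state the following,'' and you have simply spelled out the tensor-product argument that this sentence leaves implicit. The key observation you isolate --- that the bounds from Proposition~\ref{seqqtgauge} are uniform in the particle number, so the layerwise estimate passes to the full Fock space --- is precisely what makes the deduction immediate.
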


\subsection{Quadrabasic  operators and cumulants }In  non-commutative setting, random variables are understood to be the elements of the $*$-algebra generated by creator, annihilator or gauge operators. Particularly interesting are their joint mixed moments.
In order to work effectively on this object we need to combine joint moments with corresponding cumulants. This topic in the case of $q$-deformed Fock space was deeply analyzed in the literature; see \cite{Ans01,Ans04,Bia97,Nica96}. Our approach is close to \cite{Ans01,Ejsmont1}. We define $\lambda_1 \O\lambda_{\bar 1}$ to be $\lambda_i \lambda_{\bar i}I\O I$.
We also use a special convention that   $\bar T_{{ i}}:=T_{{\bar i}}$ where $i\in[\bar n]$.
\begin{definition} The operator 
\begin{equation}
\X_{\xi_i,\lambda_i ,T_i}^{\xi_{\bar i},\lambda_{\bar i} ,T_{\bar i}}:= \B_{{\xi_{ i}}\O {\xi_{\bar i}}} +\B^\ast_{{\xi_{ i}}\O {\xi_{\bar i}}}+p_{T_{{ i}}\O T_{ {{\bar i}}}}+\lambda_i \O\lambda_{\bar i},\qquad {\xi_{ i}}\O {\xi_{\bar i}} \in \HH_\R, \quad\lambda_i ,\lambda_{\bar i}\in\R,
\end{equation}
on $\FQ$ is called a  {quadrabasic   operator}. 
\end{definition}

\begin{definition}\label{Defi:Cumulant} Let $\pi\in \P^{\O}(n)$, $\BL=B\O\bar B=\{i_1,\dots,i_m\}\O \{\bar i_1,\dots,\bar i_k\}  \in \MPair(\pi)$, $\lambda_i,\lambda_{\bar i}\in \R$ and 
 $\xi_i\O \xi_{\bar i} \in \HH_\R$
 and  {the diagonal cumulant} is defined by
\begin{align*}
&\Cum \BL) := 
\begin{cases}
 \lambda_{ i_1 }  \lambda_{\bar i_1}& \text{ if $\BL$ is a singleton,} \\
\langle \xi_{i_1}, T_{{i_{2}}} \dots T_{{i_{m-1}}}\xi_{i_m} \rangle_{H} 
\langle \xi_{\bar i_1}, T_{{\bar i_{2}}} \dots T_{{\bar i_{k-1}}}\xi_{\bar i_k} \rangle_{\bar H}  & \text{ otherwise. }
\end{cases}
\\
&R_{\pi}^{\xi,T}:=\prod_{\BL \in \MPair(\pi)}\Cum {\BL}).
\end{align*}
\end{definition}

\noindent
The following theorem is the main result of this section. Its proof is given in Subsection \ref{sec:proofwick}.  
\begin{theorem}\label{thm2} Suppose that $\xi_i\O \xi_{\bar i} \in \HH_\R^n$, then 
\begin{align} 
\state\big( \X_{\xi_1,\lambda_1 ,T_1}^{\xi_{\bar 1},\lambda_{\bar 1} ,T_{\bar 1}}\cdots \X_{\xi_n,\lambda_n ,T_n}^{\xi_{\bar n},\lambda_{\bar n} ,T_{\bar n}}\big)&=\sum_{\pi\in \PB(n)} q^{\rc(\pi|_{[n]})}t^{\nest(\pi|_{[n]})}\v^{\rc(\pi|_{[\bar n]})}\w^{\nest(\pi|_{[\bar n]})}R_{\pi}^{\xi,T}
 \label{wick:glowna}
\end{align}
\end{theorem}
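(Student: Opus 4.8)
The plan is to follow closely the pattern of the proof of Theorem~\ref{momentymieszane}, enlarging the bookkeeping to accommodate the two extra summands $p_{T_i\O T_{\bar i}}$ and $\lambda_i\O\lambda_{\bar i}$ of $\X^{\lambda_i\O\lambda_{\bar i}}_{\xi_i\O \xi_{\bar i}}=\B_{\xi_i\O \xi_{\bar i}}+\B^\ast_{\xi_i\O \xi_{\bar i}}+p_{T_i\O T_{\bar i}}+\lambda_i\O\lambda_{\bar i}$. First I would expand each $\X$ by multilinearity, so that it suffices to evaluate $\state$ on an arbitrary word in the ``pure'' operators $\B$, $\B^\ast$, $p_{T\O\bar T}$, $\lambda\O\bar\lambda$; equivalently, to describe the action of such a word on $\Omega\O\bar\Omega$. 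The structural fact I want to exploit is that every one of these four summands acts in \emph{exactly the same way} --- creation, annihilation, gauge, or scalar --- on the $H$-leg and on the $\bar H$-leg of the diagonal Fock space, and that $\B,\B^\ast,p_{T\O\bar T}$ all preserve the diagonal subspace $\bigoplus_n H^{\otimes n}\otimes\bar H^{\otimes n}$. Consequently the combinatorial record attached to a nonzero term is automatically a \emph{diagonal} object in the sense of Definition~\ref{def:partycji}: openers, middle points, closers and singletons are created simultaneously on the two parts.

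Next I would prove, by induction on $\#Z$ with $Z=\{k,\dots,n\}$ and the operators processed from right to left (exactly as in \eqref{formula101}), an intermediate identity of the following shape: $\prod_{i\in Z}(\cdot)_i(\Omega\O\bar\Omega)$ equals a sum over diagonal partitions $\pi$ of $Z\sqcup\bar Z$ compatible with the chosen word --- each index already carrying its final role, openers being the positions where $\B^\ast$ was applied, middle points where $p$ was applied, closers where $\B$ was applied, singletons where $\lambda$ was applied, and some blocks still ``open'' (opener and middle points present, closer not yet applied) --- weighted by $q^{\rc(\pi|_{[n]})}t^{\nest(\pi|_{[n]})}\v^{\rc(\pi|_{[\bar n]})}\w^{\nest(\pi|_{[\bar n]})}$, times $\prod_{\BL}\Cum\BL)$ over the completed diagonal blocks $\BL\in\MPairPair(\pi)$, times the residual tensor $\bigl(\bigotimes_{B\ \mathrm{open}}v_B\bigr)\O\bigl(\bigotimes_{\bar B\ \mathrm{open}}v_{\bar B}\bigr)$, where $v_B=T_{j_1}\cdots T_{j_{s-1}}\xi_{j_s}$ records the opener $j_s$ of $B$ acted on by the gauge operators at its middle points $j_{s-1}>\dots>j_1$. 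The four cases of the induction step for the newly applied operator $(\cdot)_1$ are: $\B^\ast_{\xi_1\O\xi_{\bar 1}}$ opens a new block (adds the leg $\xi_1$, resp.\ $\xi_{\bar 1}$, at the front, weight unchanged); $\lambda_1\O\lambda_{\bar 1}$ multiplies by $\lambda_1\lambda_{\bar 1}$ and records the new singletons $\{1\},\{\bar 1\}$; $p_{T_1\O T_{\bar 1}}$ expands over the open blocks, picks one $B$ together with its conjugate $\bar B$, replaces $v_B,v_{\bar B}$ by $T_1 v_B,\ T_{\bar 1}v_{\bar B}$, pulls those legs to the front, records $1,\bar 1$ as middle points, and contributes the factors $q^{i-1}t^{N-i}$ and $\v^{j-1}\w^{N-j}$ coming from the explicit formula for $p_T^{(q,t)}$, where $N$ is the number of open blocks and $i,j$ the tensor positions of the two chosen legs; $\B_{\xi_1\O\xi_{\bar 1}}$ does the same but \emph{closes} the block, producing the factor $\Cum\BL)=\langle\xi_1,v_B\rangle_H\langle\xi_{\bar 1},v_{\bar B}\rangle_{\bar H}$ from the double sum in the formula for $\B_{\xx}$.

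The heart of the argument is then the exponent accounting: one checks, exactly as in the displayed computation of the proof of Theorem~\ref{momentymieszane} and as in Biane \cite{Bia97} and Anshelevich \cite{Ans01}, that each time an open block sitting at tensor position $i$ among $N$ open blocks is extended or closed, the $i-1$ legs in front of it are precisely the blocks whose current arc (in the sense of $\Semi$) gets crossed --- contributing $i-1$ to $\rc$ --- and the remaining $N-i$ are those nesting it --- contributing $N-i$ to $\nest$ --- so that summing over the whole history reconstructs $\rc(\pi|_{[n]})$ and $\nest(\pi|_{[n]})$, and likewise on the barred side. One also verifies the bijection ``histories $\leftrightarrow$ diagonal partitions'': a given $\pi\in\PB(n)$ determines the operation type at each index (opener $=$ maximum of a non-singleton block, closer $=$ minimum, middles in between, singleton $\mapsto\lambda$), and by Remark~\ref{rem:partycje}(5) this assignment is the same on the two parts, so each $\pi$ is produced by exactly one history, with the stated weight.

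Finally I would specialize to $Z=\{1,\dots,n\}$ and apply $\state(\cdot)=\langle\Omega\O\bar\Omega,\cdot\,\Omega\O\bar\Omega\rangle_{\q}$: any surviving open block leaves a residual tensor of positive degree orthogonal to $\Omega\O\bar\Omega$, so only genuine $\pi\in\PB(n)$ (all blocks closed, $\MPair(\pi)$ defined, $\MPairPair(\pi)=\MPair(\pi)$) contribute, with residual tensor $\Omega\O\bar\Omega$ of norm one; this is precisely \eqref{wick:glowna}. I expect the main obstacle to be the bookkeeping of the $q,t,\v,\w$-exponents in the gauge step, because $p_{T}^{(q,t)}$ permutes the open-block legs (pulling the acted leg to the front) by possibly different permutations on the two factors, so one must argue --- through the identification of tensor position with restricted-crossing / restricted-nesting count --- that the two parts nevertheless assemble independently into $\rc,\nest$ of conjugate partitions, and in particular that no singleton-type corrections (the $\InS$, $\SL$ terms of Theorem~\ref{momentymieszane}) appear, the point being that here every surviving tensor leg is an open block. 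Conceptually the statement is the ``diagonal'' product of two copies of the one-colour $(q,t)$-Wick formula for $a+a^\ast+p_T^{(q,t)}+\lambda$ from \cite{Ans01,Ejsmont1}, coupled by the requirement that the two partitions be conjugate; but the inductive argument above is self-contained and parallels the proof already given for Theorem~\ref{momentymieszane}.
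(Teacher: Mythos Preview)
Your overall strategy---expand each $\X$ into its four summands, run an induction from right to left over the pure words in $\B,\B^\ast,p,\lambda$, and keep track of ``open'' versus ``closed'' blocks---is exactly the route the paper takes. The minor difference is that you treat the scalar $\lambda_i\O\lambda_{\bar i}$ as a fourth case in the induction, whereas the paper postpones it: it first proves the formula for $\X-\lambda$ (so only three cases) and then recovers the singletons by expanding $\prod_i\big((\X_i-\lambda_i)+\lambda_i\big)$. Either choice works.

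There is, however, a genuine gap in your intermediate identity. You claim that after processing $Z=\{k,\dots,n\}$ the weight attached to a partial history is already $q^{\rc(\pi|_{[n]})}t^{\nest(\pi|_{[n]})}\v^{\rc(\pi|_{[\bar n]})}\w^{\nest(\pi|_{[\bar n]})}$. This is false in general: take $n=5$, apply $\B^\ast$ at $5$ and $4$ and then $p$ at $3$ on the second tensor leg. The accumulated weight is $q^{1}$, but the partial partition on $[n]$ has the single arc $(3,5)$ and hence $\rc=0$. The missing $q$-factor is exactly the ``covered open block'' $\{4\}$ sitting inside $(3,5)$. In other words, your assertion that ``no singleton-type corrections (the $\InS,\SL$ terms) appear because every surviving tensor leg is an open block'' is precisely backwards: those open blocks are what force the corrections at intermediate stages.

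The paper handles this by introducing the set $\P^{\O}_E(n)$ of \emph{extended} partitions, in which the open blocks are explicitly marked, and by extending $\InS$ and $\SL$ to count pairs (extended block, arc) via $\min$ and $\max$ of the extended block. The correct intermediate exponent is $q^{\rc+\InS}\,t^{\nest+\SL}$ on each side; the induction step then verifies, in each of the three cases $\ast,1,E$, that adding the new arc $(k,m_i)$ increments this combined exponent by exactly $i-1$ for $q$ and $N-i$ for $t$ (the cancellations between $\rc$ and $\InS$, and between $\nest$ and $\SL$, are spelled out in the proof of Theorem~\ref{momentymieszane}). Only at the very end, when $\state$ kills every term with a surviving open block, do the $\InS,\SL$ contributions vanish and the exponent collapse to $\rc,\nest$ alone. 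Your ``heart of the argument'' paragraph (position $i$ gives $i-1$ crossings and $N-i$ nestings) is essentially the right incremental accounting, but it computes the \emph{final} $\rc,\nest$ only after all steps are summed; it does not justify the intermediate formula you wrote down.
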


\begin{corollary}\label{cor13} 
\begin{enumerate}[\rm(1)]
\item For $t=\w=1$, $ \v=0$, $\xi_{\bar i}=\xi $,   $\|\xi\|=1$ and $\lambda_i=\lambda_{\bar i}=0$,  we obtain the $q$-deformed formula for moments of random variable on $q$-Fock space (see \cite{Ans01} or \cite[Proposition 6]{Ans04}). 
\item For $T_i\O  T_{\bar{i}}=\mathbf{0}$  (which is equivalent to $T_i=\mathbf{0}$ or  $ T_{\bar{i}}=\mathbf{0}$) and $\lambda_i\O\lambda_{\bar i}=0$, we get the formula  \eqref{eq:multipair}.
\end{enumerate}
\end{corollary}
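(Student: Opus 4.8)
Both items are obtained from the master formula~\eqref{wick:glowna} of Theorem~\ref{thm2} by substituting the prescribed parameter values and simplifying; the only genuine content lies in a combinatorial clean-up of the index set, and it appears in item~(1). For item~(1) I would start by putting $t=\w=1$ in~\eqref{wick:glowna}, which deletes the two nesting weights $t^{\nest(\pi|_{[n]})}$ and $\w^{\nest(\pi|_{[\bar n]})}$. Putting $\v=0$ then replaces $\v^{\rc(\pi|_{[\bar n]})}$ by the indicator $\chf{\rc(\pi|_{[\bar n]})=0}$, so that only the diagonal partitions whose bar-leg $\pi|_{[\bar n]}$ is non-crossing survive, each weighted by $q^{\rc(\pi|_{[n]})}$; and $\lambda_i=\lambda_{\bar i}=0$ kills the singleton cumulants $\lambda_{i_1}\lambda_{\bar i_1}$, so the surviving $\pi$ are singleton-free. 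Finally, $\xi_{\bar i}=\xi$ with $\norm{\xi}=1$ reduces the bar-factor of every diagonal cumulant $\Cum{\BL})$ to a scalar built only from $\xi$ and the operators $T_{\bar i}$ read along the conjugate block.

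What remains — and this is the step I expect to be the real obstacle — is the combinatorial identification of the surviving index set, together with its $q^{\rc(\pi|_{[n]})}$-weights and cumulants $R_\pi^\xi$, with the data of the $q$-deformed moment formula of~\cite{Ans01} (cf.~\cite[Proposition~6]{Ans04}). I would do this by exhibiting the restriction map $\pi\mapsto\pi|_{[n]}$ as a bijection of $\set{\pi\in\PB(n):\rc(\pi|_{[\bar n]})=0,\ \Sing(\pi)=\emptyset}$ onto the singleton-free set partitions of $[n]$: surjectivity is built from the opener and left-leg data of a given partition of $[n]$, and injectivity follows because a non-crossing bar-leg is pinned down by exactly that data (this is the mechanism already used in Remark~\ref{rem:partycje}(2), via conditions~II--IV of Definition~\ref{def:partycji}); one then checks that $\rc$ and the product of block cumulants transport correctly under this map. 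With that in hand the term-by-term comparison with~\cite{Ans01} is routine.

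For item~(2) I would argue directly on the operators: a tensor product of operators is zero as soon as one factor is, so $T_i\O T_{\bar i}=\mathbf 0$ forces $p_{T_i\O T_{\bar i}}=p_{T_i}^{(q,t)}\O p_{T_{\bar i}}^{(\v,\w)}=\mathbf 0$, and together with $\lambda_i=\lambda_{\bar i}=0$ this collapses $\X^{\mathbf 0\O\mathbf 0}_{\xi_i\O\xi_{\bar i}}$ to the quadrabasic Gaussian operator $\G_{\xi_i\O\xi_{\bar i}}$, so that~\eqref{eq:multipair} is just Theorem~\ref{momentymieszane}. Equivalently one can watch~\eqref{wick:glowna} degenerate to~\eqref{eq:multipair} from the inside: any $\pi\in\PB(n)$ having a block of cardinality $\geq3$ has a middle time $j$, and by parts~II--III of Definition~\ref{def:partycji} the middle times of $\pi|_{[n]}$ and of $\pi|_{[\bar n]}$ coincide, so $T_j$ enters the positive factor of some $\Cum{\BL})$ and $T_{\bar j}$ the negative factor of some (possibly different) $\Cum{\BL'})$, one of which is $\mathbf 0$; hence $R_\pi^\xi=0$. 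With $\lambda_i=\lambda_{\bar i}=0$ the only surviving partitions are those in $\P_2^\O(n)$, on which $\rc(\pi|_{[n]})=\Cr(\pi|_{[n]})$, $\nest(\pi|_{[n]})=\Cov(\pi|_{[n]})$ (and likewise for the bar-part) and each diagonal pair cumulant $\Cum{\BL})$ equals $\nawiasl\xi_{\lb_\BL}\O\xi_{\bar{\lb}_\BL},\xi_{\rb_\BL}\O\xi_{\bar{\rb}_\BL}\nawiasp$; substituting these into~\eqref{wick:glowna} gives~\eqref{eq:multipair}.
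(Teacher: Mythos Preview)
The paper states this corollary without proof, treating both items as immediate specializations of Theorem~\ref{thm2}; your argument supplies exactly the details the paper omits, and the strategy is the natural one. Your handling of item~(2) is clean on both the operator level and the combinatorial level (the observation via Remark~\ref{rem:partycje}(5) that middle points coincide on the two legs is precisely what is needed).

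One point worth flagging in item~(1): after your bijection $\pi\mapsto\pi|_{[n]}$ you still carry the bar-factors $\langle\xi,T_{\bar i_2}\cdots T_{\bar i_{k-1}}\xi\rangle_{\bar H}$ attached to each conjugate block $\bar B$, and these are not automatically equal to~$1$ for an arbitrary choice of the operators $T_{\bar i}$ (indeed the conjugate blocks $\bar B$ need not have the same cardinalities as the blocks $B$). The corollary as stated is silent about $T_{\bar i}$; to land exactly on the $q$-Fock formula of \cite{Ans01,Ans04} one should also take $T_{\bar i}=\id$ (the choice used later in Section~\ref{Subsec:Processes}), which forces every bar-factor to $\langle\xi,\xi\rangle_{\bar H}=1$. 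This is a looseness in the paper's statement rather than a defect in your reasoning; your bijection and the transport of $q^{\rc}$ and of the $H$-side cumulants are correct as written.
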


\subsection{The orthogonal polynomial }

$(\q)$-Poisson polynomials are defined by the recursion relations
\begin{align}
\label{Charlier}
x \tilde Q_{ n}^{(\q)}( x) &= \tilde Q_{ n+1}^{(\q)}(x) + [n]_{q,t} [n]_{\v,\w} \tilde Q_{ n}^{(\q)}( x) + [n]_{q,t} [n]_{\v,\w}\tilde Q_{n-1}^{(\q)}( x), \qquad n\geq 1
\end{align}
with initial conditions $\tilde Q_{-1}^{(\q)}(x) = 0$, $\tilde Q_{0}^{(\q)}( x) = 1$ and $\tilde Q_{1}^{(\q)}( x) = x$. There exists a probability measure $\tilde{\mu}_{\q}$ which is associated to the orthogonal polynomials $\tilde Q_{ n}^{(\q)}$. 
\begin{remark} The measure of orthogonality of the above polynomial sequence is not known. In special cases, we can identify this measure:
\begin{enumerate}[\rm(1)]
\item the measure $\tilde{\mu}_{1,1,0,0}$ is the classical Poisson law; 
\item the measure $\tilde{\mu}_{0,1,0,1}$ is the Marchenko-Pastur distribution; 
\item the measure $\tilde{\mu}_{q,1,0,1}$ is the $q$-Poisson law and the orthogonal polynomials $\tilde Q_n^{(q,1,0,1)}(x)$ are called \emph{$q$-Poisson-Charlier polynomials} (see \cite{Ans01}); 
\end{enumerate}
\end{remark}
Using the same argument as in Theorem \ref{rozkladgaussa}, we can prove the following. 
\begin{proposition} Let $\xx \in \HH_\R$ and $ \|\xx\|=1$ and $T=\bar T =\id$. Then the probability distribution of $\X_{\xi,0 ,\id}^{\eta,0 ,\id} $ with respect to the vacuum state is given by $\tilde{\mu}_{\q}$.
\label{WielomianyTypeB}
\end{proposition}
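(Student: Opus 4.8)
The plan is to mimic the proof of Theorem \ref{rozkladgaussa} verbatim, with the creation–annihilation operator $\G_{\xx}$ replaced by the quadrabasic operator $\X_{\xi\O\eta}^{0\O 0}=\B_{\xi\O\eta}+\B^\ast_{\xi\O\eta}+p_{\id\O\id}$, so the only genuinely new input is that the gauge term $p_{\id\O\id}$ acts on the diagonal vectors $\xi^{\otimes n}\O\eta^{\otimes n}$ as multiplication by $[n]_{q,t}[n]_{\v,\w}$. First I would record that $p_{\id}^{(q,t)}(\xi^{\otimes n})=R^{(n)}_{q,t}\xi^{\otimes n}=[n]_{q,t}\xi^{\otimes n}$ — this is immediate from the formula $p_T^{(q,t)}(\xi_1\otimes\cdots\otimes\xi_n)=\sum_{i=1}^n q^{i-1}t^{n-i}T(\xi_i)\otimes\xi_1\otimes\cdots\otimes\hat\xi_i\otimes\cdots\otimes\xi_n$ given in Subsection \ref{seqqtgauge} with $T=\id$ and all $\xi_i=\xi$ — and hence $p_{\id\O\id}(\xi^{\otimes n}\O\eta^{\otimes n})=[n]_{q,t}[n]_{\v,\w}\,\xi^{\otimes n}\O\eta^{\otimes n}$.

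Combining this with the computation already carried out inside the proof of Theorem \ref{rozkladgaussa}, namely $\B^\ast_{\xx}(\xi^{\otimes n}\O\eta^{\otimes n})=\xi^{\otimes(n+1)}\O\eta^{\otimes(n+1)}$ and $\B_{\xx}(\xi^{\otimes n}\O\eta^{\otimes n})=[n]_{q,t}[n]_{\v,\w}\,\xi^{\otimes(n-1)}\O\eta^{\otimes(n-1)}$, I get
\begin{align*}
\X_{\xi\O\eta}^{0\O 0}\,\xi^{\otimes n}\O\eta^{\otimes n}=\xi^{\otimes(n+1)}\O\eta^{\otimes(n+1)}+[n]_{q,t}[n]_{\v,\w}\,\xi^{\otimes n}\O\eta^{\otimes n}+[n]_{q,t}[n]_{\v,\w}\,\xi^{\otimes(n-1)}\O\eta^{\otimes(n-1)},
\end{align*}
which is exactly the three-term pattern of the recursion \eqref{Charlier} for $\tilde Q_n^{(\q)}$. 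Next I would reuse the norm computation $\|\xi^{\otimes n}\O\eta^{\otimes n}\|_{\q}^2=\prod_{k=1}^n [k]_{q,t}[k]_{\v,\w}=\gamma_0\cdots\gamma_{n-1}$ (with the $\gamma$'s here being the Jacobi parameters of \eqref{Charlier}, which coincide with those of \eqref{recursion}) together with \eqref{eq54} to conclude $\|\xi^{\otimes n}\O\eta^{\otimes n}\|_{\q}=\|\tilde Q_n^{(\q)}\|_{L^2(\R,\tilde\mu_{\q})}$. Therefore the map $\Phi(\xi^{\otimes n}\O\eta^{\otimes n})=\tilde Q_n^{(\q)}(x)$ extends to an isometry from $\overline{\mathrm{span}}\{\xi^{\otimes n}\O\eta^{\otimes n}\}$ onto $L^2(\R,\tilde\mu_{\q})$; by induction, using the displayed recursion and \eqref{Charlier}, $\Phi$ intertwines $\X_{\xi\O\eta}^{0\O 0}$ with multiplication by $x$, so $\Phi\big((\X_{\xi\O\eta}^{0\O 0})^n\Omega\O\bar\Omega\big)=x^n$ and hence $\state\big((\X_{\xi\O\eta}^{0\O 0})^n\Omega\O\bar\Omega\big)=m_n(\tilde\mu_{\q})$ for all $n$.

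The one point that needs slightly more care than in Theorem \ref{rozkladgaussa} is the moment-determinacy: unlike the Gaussian case the measure $\tilde\mu_{\q}$ need not be compactly supported (the Jacobi parameters $\gamma_n=[n+1]_{q,t}[n+1]_{\v,\w}$ may grow, e.g. like $n^2$ when $t=\w=1$). So instead of citing compact support I would invoke Carleman's criterion exactly as in the hyperbolic-secant paragraph of Subsection \ref{subsec:intersingpolynomial}: since $\gamma_n=[n+1]_{q,t}[n+1]_{\v,\w}\le (n+1)^2$ under the standing constraints $\abs q\le t\le 1$, $\abs\v\le\w\le 1$, one has $\sum_{n\ge 1}\gamma_n^{-1/2}\ge\sum_{n\ge 1}(n+1)^{-1}=\infty$, hence the Hamburger moment problem for $(m_n(\tilde\mu_{\q}))_n$ is determinate and $\tilde\mu_{\q}$ is the unique probability measure with these moments. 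Since the distribution $\m$ of $\X_{\xi\O\eta}^{0\O 0}$ with respect to $\state$ shares the same moment sequence, $\m=\tilde\mu_{\q}$, which is the claim. The main (and only) obstacle is this determinacy step — everything else is a line-by-line transcription of the proof of Theorem \ref{rozkladgaussa} with the extra diagonal gauge eigenvalue inserted.
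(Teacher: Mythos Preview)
Your proposal is correct and follows exactly the approach the paper indicates: the paper's ``proof'' of this proposition is literally the single sentence ``Using the same argument as in Theorem \ref{rozkladgaussa}, we can prove the following,'' and you have carried out precisely that transcription, inserting the extra gauge eigenvalue $p_{\id\O\id}(\xi^{\otimes n}\O\eta^{\otimes n})=[n]_{q,t}[n]_{\v,\w}\,\xi^{\otimes n}\O\eta^{\otimes n}$ to obtain the three-term recursion \eqref{Charlier}. Your added Carleman step is a nice bit of extra care; under the standing constraints $q,\v\in(-1,1)$ of Theorem \ref{rozkladgaussa} the Jacobi parameters are in fact bounded and the compact-support argument already suffices, but your version also covers the borderline parameter values without modification.
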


\subsection{Proof of  Theorem \ref{thm2}}
\label{sec:proofwick}

We begin with some special notations.

In order to prove Theorem \ref{thm2} we need the set $\P^{\O}_E(n)$ of so-called extended partitions.  Here some blocks  can be additionally marked by $\primes$ and so  we consider additional blocks  denoted by 
$\{i_1,\dots,i_m\}_\primes $ and $ \{\bar i_1,\dots,\bar i_k\}_\primes.
$
\begin{definition}\label{Partycjerozszerzone}
We denote by $\P^{\O }_E(n)$ the set partition of $[n]\sqcup [\bar{n}]$   such that each block of $\pi$  is associated with $[n]$ or $[\bar{n}]$  and the collections of openers points 
of arcs of $\pi_{[ n]}$ and $\pi_{[\bar n]}$ coincide. 
Additionally, every block of  partitions of $\pi$ is regular  or expanded.
All   singletons are denoted as extended.
If for the block  $B\in \pi_{[ n]}$ exist conjugate block $\bar B\in \pi_{[\bar n]}$, then both of them must be denoted as regular or extended as $$(a,\dots,b),(\bar a,\dots,\bar c) \text{ or }(a,\dots,b)_E,(\bar a,\dots,\bar c)_E.$$ All other  blocks are denoted as extended.


%
\end{definition}
\begin{example}\label{ex:extendedpart} For example 
\begin{align*}
\pi &= \{(1,4)_\primes,(2)_E,(3)_E,(5)_\primes\}\sqcup \{(\bar 1,\overline {5})_\primes ,(\bar 2)_\primes,(\bar 3)_\primes,(\bar 4)_\primes\}\in \P^{\O }_E(5)
\\\pi &= \{(1,4),(2)_E,(3)_E,(5)_\primes\}\sqcup \{(\bar 1,\overline {5}) ,(\bar 2)_\primes,(\bar 3)_\primes,(\bar 4)_\primes\}\in \P^{\O }_E(5)
\\\pi &= \{(1,4,5),(2)_E,(3)_E\}\sqcup \{(\bar 1,\overline {3}) ,(\bar 2)_\primes,(\bar 4, \bar 5)_\primes\}\in \P^{\O }_E(5)
\\\pi &= \{(1,4,5),(2)_E,(3)_E\}\sqcup \{(\bar 1,\overline {3}) ,(\bar 2)_\primes,(\bar 4, \bar 5)\}\notin \P^{\O }_E(5)
\end{align*}
\end{example}
\noindent 
For $\pi \in \P^{\O}_E(n) $ we denote by $Block_\primes(\pi)$ the tensor blocks of $\pi$ which are marked by $\primes$ and $Block(\pi)=\pi\setminus Block_\primes(\pi).$ Thus we can decompose an extended partition  as a disjoint subset $$\pi=Block_\primes(\pi)\cup Block(\pi).$$
\begin{remark}
  Note that if  $\P^{\O}_E(n)$ consists of  pairs and singletons, then it is not the same as $\PS_{1,2}^{\O}(n)$.
The objects $\PS_{1,2}^{\O}(n)$  are the partitions
from $\P^{\O}_E(n)$ consisting of pairs and singletons, such that all pairs are regular, and all singletons are
expanded.
\end{remark}

\noindent \textbf{\emph{Cover and left of max}}. We also
need to extend the definition of  $\InS(\pi|_Z)$ and $\SL(\pi|_Z)$ for $\pi\in \P^\O_E(n)$ where $Z$ is $[n]$ or $[\bar n]$, i.e. we define
\begin{align*}
&\InS(\pi|_Z):=\#\big\{ (V,W)\in Block_\primes(\pi|_Z)\times \Semi(\pi|_Z) \mid 
 i<\min V <j \textrm{ for } i,j\in W\big\},
\\&
\SL(\pi|_Z):=\#\{ (V,W)\in Block_\primes(\pi|_Z)\times \Semi(\pi|_Z) \mid \max V > j \text{~for all~} j\in W \}.
\end{align*}
\begin{remark} Note that  $\InS(\pi)$ represents the number of covered singletons and $\SL(\pi)$  the number of singletons to the right of  arcs, whenever all extended block are singletons  (which is the reason we use the same notation). 
\end{remark}

\noindent
In order to simplify  notation, we define the following operators, which map $H$ $(\bar H)$ into $H$ $(\bar H)$ and which are indexed by the block $B_E=\{i_1,\dots,i_m\}_\primes\in Block_\primes (\pi|_{[n]})$ i.e. $\OptTTyl_{B_E}=T_{{i_1}} \dots T_{{i_{m-2}}}T_{{i_{m-1}}}$ and for $B=\{i_1,\dots,i_m\}\in Block(\pi|_{[n]})$ we denote $\OptT_{B}=T_{{i_2}} \dots T_{{i_{m-2}}}T_{{i_{m-1}}}$. We use the same notation for $[\bar n]$, i.e. $\OptT_{\bar B_E} $  
or 
$\OptTTyl_{\bar B_E}$. 
With the notation above we also introduce: 
\begin{align*}
\begin{split}
&\Cumm=\prod_{\substack{B \in \pi|_{[n]}
}}
 \langle x_{\min B} ,\OptT_{B}\xi_{\max B}\rangle_H \prod_{\substack{\bar B \in \pi|_{[\bar n]}}} \langle x_{\min \bar B}, \OptT_{\bar B}\xi_{\max \bar B}\rangle_{\bar H}, \\ 
&\CummTensor=\Big[\bigotimes_{\substack{ B_E \in \pi|_{[n]}}}\{ \OptTTyl_{B_E} \xi_{\max B_\primes}\}_{\min B_\primes}\Big]\O\Big[\bigotimes_{\substack{\bar B_E \in \pi|_{[\bar n]}}}\{ \OptTTyl_{\bar B_E} \xi_{\max \bar B_\primes}\}_{\min \bar B_\primes}\Big] .
\end{split}
\end{align*}
Notice that in the above formula we use the following bracket notation $\{\cdot\}_{\min B_E}$, which should be understood that the position of $\cdot$ (in the tensor product) is ordered with respect to the $\min B_E$. 
\begin{example} For the partition $$\pi = \{(1,4,6,7)_\primes,(2)_E,(3,5)_\primes,(9)_\primes,(8,10)\}\sqcup \{(\bar 1,\bar 3, \bar 4,\bar 6,\overline {10})_\primes ,(\bar 2)_\primes,(\bar 5)_\primes,(\bar 7)_\primes,(\bar 8,\bar 9)\},$$ 
 we have 
\begin{align*}
\Cumm&=  \langle x_{8} ,\xi_{10}\rangle_H \langle x_{\bar 8} ,\xi_{\bar 9}\rangle_{\bar H}\\
\CummTensor&=\big[T_{1}T_{4}T_{6}\xi_7\otimes \xi_2\otimes T_{3}\xi_5 \otimes \xi_9\big]\O\big[ T_{{\bar 1}} T_{{\bar 3}} T_{{\bar 4}} T_{{\bar 6}}\xi_{\overline {10}}\otimes \xi_{\bar 2}\otimes \xi_{\bar 5} \otimes \xi_{\bar 7}\big] .
\end{align*}
\end{example}
We also use the following convention for $\epsilon\in\{1,\ast,\primes\}$
\[
\B^{\epsilon}_{\xi_i\O\xi_{\bar i}} = 
\begin{cases}
\B^{*}_{\xi_i\O\xi_{\bar i}} & \text{ if } \epsilon=\ast, \\
\B_{\xi_i\O\xi_{\bar i}} & \text{ if } \epsilon=1,
\\
p_{T_{i}\O T_{{\bar i}}} & \text{ if } \epsilon=\primes.
\end{cases}
\]
The main idea of the proof is  similar to that of Theorem
\ref{momentymieszane} so, for brevity, we will leave out some of the combinatorial details.
\begin{proof}[Proof of  Theorem \ref{thm2} ]
 Observe that when $n=1$, then $\B_{\xi_{1}\O \xi_{\bar 1}}\Omega\O \bar \Omega=p_{T_{_1}\O T_{{\bar 1}}}\Omega\O \bar \Omega=0$ and $\B^\ast_{\xi_{1}\O \xi_{\bar 1}}\Omega\O \bar \Omega={\xi_{1}\O \xi_{\bar 1}}$.  
Suppose that  $\xi_{i}\O \xi_{\bar i} \in  \HH_{\R}$, $i\in\{2,\dots,n\}$
and any $\epsilon=(\epsilon(2), \dots, \epsilon(n))\in\{1,\ast,\primes\}^n$,  we have
\begin{align}
\begin{split}
\B^{\epsilon(2)}_{\xi_{2}\O \xi_{\bar 2}}\cdots \B^{\epsilon(n)}_{\xi_{n}\O \xi_{\bar n}}\Omega \O\bar \Omega = \sum_{\pi\in\PB_{E;\epsilon}(\{2,\dots,n\})}&  q^{\rc(\pi|_{[n]})+\InS((\pi|_{[n]}))}t^{\nest(\pi|_{[n]})+\SL(\pi|_{[n]})}\\&\v^{\rc(\pi|_{[\bar n]})+\InS(\pi|_{[\bar n]})}\w^{\nest(\pi|_{[\bar n]})+\SL(\pi|_{[\bar n]})} \Cumm\CummTensor.
\end{split}
\end{align}
We will show that the action of $\B^{\epsilon(1)}_{\xi_{1}\O \xi_{\bar 1}}$ corresponds to the inductive graphic description of set tensor partitions.
We fix $\pi\in \PB_{E;\epsilon}(\{2,\dots,n\})$ and run the argument below over all partitions of this type. Suppose that 
\begin{itemize}
 \item $\pi|_{{\{2,\dots,n\}}}$ has blocks in $ Block_\primes(\pi|_{\{2,\dots,n\}})$ on the positions $s_{1}<\cdots <s_{p_1} <  \cdots <s_r$ and arcs $W_{1},\dots, W_{u_1}$ which cover $s_{p_1}$, arcs $U_1,\dots, U_{l_1}$ to the left of $s_{p_1}$,
\item $\pi|_{{\{\bar 2,\dots,\bar n\}}}$ has blocks in $ Block_\primes(\pi|_{\{\bar 2,\dots,\bar n\}})$ on the positions $k_1<\cdots <k_{p_2} <  \cdots <k_r$, arcs
 $\bar W_{1},\dots,\bar   W_{u_2}$ which cover $k_{p_2}$ and arcs $\bar U_1,\dots, \bar U_{l_2}$ to the left of $k_{p_2}$. 
\end{itemize}
Suppose that a partition $\pi$ has blocks $ \BC\in Block_\primes(\pi)$ on the $\ith^{\rm th}$ position, i.e. $\ith=(\min S_\primes,\min \bar K_\primes)$. In this case blocks $\BC$ have the following contribution to $ \CummTensor$: 
\begin{align*}
  \{\cdot\}_{s_1}\otimes\dots\otimes \{\OptTTyl_{S_E}\xi_{\max S_\primes}\}_{s_{p_1}}\otimes \dots\otimes\{\cdot\}_{s_r}\O\{\cdot\}_{k_1}\otimes\dots\otimes \{\OptTTyl_{\bar K_E}\xi_{\max\bar K_\primes}\}_{k_{p_2}}\otimes \dots\otimes\{\cdot\}_{k_r}
\end{align*}
\noindent Case 1. If $\epsilon(1)=\ast$, then the operator $\B^\ast_{\xi_{1}\O \xi_{\bar 1}}$ acts on the tensor product, putting $\xi_{1}\O \xi_{\bar 1}$ by adding (expanded) singleton on the left as in Case 1 of the proof of Theorem  \ref{momentymieszane}. 

Case 2. If $\epsilon(1)=1$, then $\B_{\xi_{1}\O \xi_{\bar 1}}$ acts on the tensor product, then new $r^2$ terms appear. In terms $s_{p_1}$ and $s_{p_2}$ the inner product 
\begin{align}\label{eq:case2proofcum}
\langle \xi_{1}, \OptTTyl_{S_E}\xi_{\max S_E} \rangle_{H} 
\langle \xi_{\bar 1 }, \OptTTyl_{\bar K_E}\xi_{\max \bar K_E} \rangle_{\bar H}
\end{align}
appears with coefficient $q^{p_1-1}\v^{p_2-1} t^{r-p_1}\w^{r-p_2}$. Graphically this corresponds to getting a set partition $\tilde{\pi} \in \PB_{E;\epsilon}(n)$ by adding $1$ and $\bar 1$ to $\pi$ and creating a new regular block 
$(1,S)$ and $(\bar 1,\bar K)$  by adding first arcs $(1,s_{p_1})$ to $S_\primes$  and the second  $(\bar 1,k_{p_2})$ to $\bar K_\primes$.
 We see that  Equation \eqref{eq:case2proofcum} can be written in the form:   
$$\langle x_{\min (1,S)} ,\OptT_{(1,S)}\xi_{\max (1,S)}\rangle_H  \langle x_{\min (\bar 1,\bar K) }, \OptT_{(\bar 1,\bar K)}\xi_{\max (\bar 1,\bar K)}\rangle_{\bar H}.$$
We can calculate the change in the statistic generated by the new arcs in the same way as in  Case 2  of the proof of Theorem  \ref{momentymieszane}. Indeed it suffices to repeat all steps of counting changes with   arcs instead of pairs, and $\rc$, $ \nest$ in place of $cr$, $\Cov$.  In this procedure  we can think that extended blocks are singletons.

Case 3. If $\epsilon(1)=\primes$, then we use the equation \eqref{rqT}, delete the element $$\OptTTyl_{S_E}\xi_{\max S_\primes}\O \OptTTyl_{\bar K_E}\xi_{\max \bar K_\primes}\text{ from }\CummTensor,$$   
 and then a new component $\widehat{\mathrm{ K}}^\mb{\xi}_{\tilde\pi}$  appears in the tensor product with coefficient $q^{p_1-1}\v^{p_2-1} t^{r-p_1}\w^{r-p_2}$ in the first position as shown in  Figure \ref{fig:FiguraExemple4}.
 \begin{figure}[h]
\begin{center}
  \begin{tikzpicture}[thick,font=\small,scale=.9]
     \path 
           (-1,0) node[] (b) {\tiny{$\widehat{\mathrm{ K}}^\mb{\xi}_{\tilde\pi}=$}}
           (1,0) node[] (bc) {\tiny{$T_{1}(\OptTTyl_{S_E}\xi_{\max S_\primes})\otimes$}}
           (1,0.4) node[] (pom1) {$\blacktriangledown$}
           (2.68,0) node[] (d) {\tiny{$...$}}
           (4.5,0) node[] (g) {\tiny{$\otimes\{\xcancel{\OptTTyl_{S_E}\xi_{\max S_\primes}}\}_{s_{p_1}}\otimes$}}
           (6.3,0) node[] (gg) {\tiny{$...$}}
           (7,0) node[] (gg) {\tiny{$\otimes\{\cdot\}_{s_r}$}}
           (8,0) node[] (gg) {$\O$};
     \draw[thick] (bc) -- +(0,0.8) -| (g);
   \end{tikzpicture}
   \begin{tikzpicture}[thick,font=\small,scale=.9]
     \path 
           (1,0) node[] (bc) {\tiny{$ T_{{\bar 1}}(\OptTTyl_{\bar K_E}\xi_{\max \bar K_\primes})\otimes$}}
           (1,0.4) node[] (pom1) {$\blacktriangledown$}
           (2.68,0) node[] (d) {\tiny{$...$}}
           (4.5,0) node[] (g) {\tiny{$\otimes \{\xcancel{\OptTTyl_{\bar K_E}\xi_{\max\bar K_\primes}}\}_{k_{p_2}}\otimes$}}
      (6.3,0) node[] (gg) {\tiny{$...$}}
           (7,0) node[] (gg) {\tiny{$\otimes\{\cdot\}_{k_r}$}};
     \draw[thick] (bc) -- +(0,0.8) -| (g);
   \end{tikzpicture}
   \end{center}  
 \caption{The visualization of the action $p_{T_{1}\otimes_{ \scriptscriptstyle\S} T_{{\bar 1}}}$ on the tensor product $\CummTensor$. }
\label{fig:FiguraExemple4}
\end{figure}

Then we get a new partition $\tilde{\pi} \in \PB_{E;\epsilon}(n)$ by adding $1$ to $S_\primes$, $\bar 1$ to $\bar K_\primes$ (with the first arc $(1,s_{p_1})$  and $(\bar 1,k_{p_2})$)  and creating two blocks in $ Block_\primes(\tilde{\pi})$. Now the minimums of newly created blocks are $1$ and $\bar 1$ and  so we can calculate the change in the statistic generated by the new arc in Case 2, because new arcs cannot be covered or be to the right of some  arc. 
This situation is also compatible with changes inside the tensor product, i.e.
$$\OptTTyl_{(1,S)_E}\xi_{\max S_\primes}=T_{{1}}(\Tens_{S_E} \xi_{\max S_E})\text{ and }\OptTTyl_{(\bar 1,\bar K)_E}\xi_{\max \bar K_\primes}=T_{{\bar 1}}(\Tens_{\bar K_E} \xi_{\max \bar K_E})$$
We now present the final step. First, let us notice that for $\epsilon\in\{1,\ast,\primes\}^n$ we have
\begin{align} \label{wick:prawieglowna}
&\state\big(\B^{\epsilon(1)}_{\xi_{1}\O \xi_{\bar 1}} \cdots \B^{\epsilon(n)}_{\xi_{n}\O \xi_{\bar n}} \big)=\sum_{\pi \in \PB_{\geq 2,\epsilon}(n)} q^{\rc(\pi|_{[n]})}t^{\nest(\pi|_{[n]})}\v^{\rc(\pi|_{[\bar n]})}\w^{\nest(\pi|_{[\bar n]})}
\Cumm\end{align}
Indeed, from equation \eqref{formula101} we see that the following condition must hold: $\CummTensor=\Omega\O \bar \Omega$. This will happen if and only if $Block_\primes(\pi) =\emptyset,$ which implies \eqref{wick:prawieglowna}. If  $Block_\primes(\pi)=\emptyset$, then $R_{\pi}^{\xi,T}=\Cumm$  so by taking the sum over all $\epsilon$ from equation \eqref{wick:prawieglowna}, we see that
\begin{align*}
\state\Big( \big(\X_{\xi_1,\lambda_1 ,T_1}^{\xi_{\bar 1},\lambda_{\bar 1} ,T_{\bar 1}}-\lambda_1 \O \lambda_{\bar 1}\big) \cdots \big(\X_{\xi_n,\lambda_n ,T_n}^{\xi_{\bar n},\lambda_{\bar n} ,T_{\bar n}}-\lambda_n\O\lambda_{\bar n} \big)\Big)=\displaystyle\sum_{\pi \in \PB_{\geq 2}(n)} q^{\rc(\pi|_{[n]})}t^{\nest(\pi|_{[n]})}\v^{\rc(\pi|_{[\bar n]})}\w^{\nest(\pi|_{[\bar n]})}R_{\pi}^{\xi,T}.
\end{align*}
We also see that
\begin{align*} 
&\state\Big( \big(\X_{\xi_1,\lambda_1 ,T_1}^{\xi_{\bar 1},\lambda_{\bar 1} ,T_{\bar 1}}-\lambda_1\O\lambda_{\bar 1} +\lambda_1\O\lambda_{\bar 1} \big) \cdots \big(\X_{\xi_n,\lambda_n ,T_n}^{\xi_{\bar n},\lambda_{\bar n} ,T_{\bar n}}-\lambda_n\O \lambda_{\bar n}+\lambda_n\O \lambda_{\bar n}\big)\Big)
\intertext{by equation \eqref{wick:prawieglowna}, we get}
&=\sum_{\nu \subset \{1,\dots,n\}} \Bigg[\prod_{i\in\nu} \lambda_i\lambda_{\bar i} \sum_{\pi \in \PB_{\geq 2}([n]\setminus \nu)} q^{\rc(\pi|_{[n]})}t^{\nest(\pi|_{[n]})}\v^{\rc(\pi|_{[\bar n]})}\w^{\nest(\pi|_{[\bar n]})}R_{\pi}^{\xi,T}\Bigg]. 
\end{align*} 

\end{proof}
\section{Application to the L\'{e}vy process}
\label{Subsec:Processes}
The main goal of this section is to investigate a new class of noncommutative  L\'{e}vy processes.  
To make it clear, we use the following Anshelevich \cite{Ans01} notation.

 Here $\chf{I}$ is the indicator function of the set $I$, considered both as a vector in $L^2(\mf{R}_+)$ and a multiplication operator on it. 
Let $K$ be a Hilbert space, and let $H$ be the Hilbert space $L^2(\mf{R}_+, d x) \otimes K$. Let $\xi \in K$, and let $T$ be an essentially self-adjoint operator on a dense domain $D \subset K$ so that $D$ is equal to the linear span of $\set{T^n \xi}_{n=0}^\infty$; moreover $\xi$ is an analytic vector for $T$.
Let $\HH=H\O\bar H$, where we assume that $\bar H$ is a one-dimensional Hilbert space spanned by such $\eta$  that $\|\eta\|=1$,  $\bar{T}=\id$ and $\mc{D}=D\O \bar H$. 
Given a half-open interval $I \subset \mf{R}_+$ denote  $p_I(T) = p_{ (\chf{I}\otimes T)\O  \id}$. For $\lambda \in \mf{R}$ and $(\chf{I}\otimes \xi)\O \eta \in \HH_\R$ we define 
\begin{align*}
p_I(\xi\O \eta , T , \lambda) := \B_{(\chf{I}\otimes \xi)\O \eta} + \B^\ast_{(\chf{I}\otimes \xi)\O \eta} + p_I(T) + \abs{I} \lambda \O 1
.\end{align*}
  We will call a process of the form $I \mapsto p_I(\xi\O\eta, T, \lambda) $ a \emph{quadrabasic L\'{e}vy process or $(q,t,\v,\w)$-L\'{e}vy process}. 
\begin{remark} 
\begin{enumerate}
[(1).]

\item  For $t=\w=1,\v=0$ this is indeed a $q$-L\'{e}vy process in a sense of Anshelevich \cite{Ans01,Ans04}.

\item  We assume $\bar{T}=\id$  for several reasons. Firstly,  all theorems below are not true for general $\bar{T}$. Secondly, cumulants in a general sense are not   conditionally positive in a sense of Hamburger moment problem for the one-parameter moment-problems but maybe this analysis can can be considered in the two-parameter case in the context of papers \cite{Devinatz57,Putinar99,stochel98}.

 \end{enumerate}
\end{remark} 
 
\begin{definition}
Denote by $\mf{C} \langle \mb{x} \rangle = \mf{C} \langle x_1, x_2 , \ldots, x_k\rangle$ the algebra of polynomials in $k$ formal noncommuting variables with complex coefficients. 
We denote by $\delta_0(f)$ the constant term of $f \in \mf{C} \langle \mb{x} \rangle$. 
 While we take $V$ to be $k$-dimensional, the same arguments will work for an arbitrary $V$, as long as we use a more functional definition of a process, namely for $f = \sum a_i x_i \in V$, we would define $T (f) = \sum a_i T_{i} , \xi(f) = \sum a_i \xi_i, \lambda(f) = \sum a_i \lambda_i $.  
We define a process 
$$\mb{X}^{\underline{u}(i)}(I_{\underline{v}(i)}):=p_{I_{\underline{v}(i)}}(\xi_{\underline{u}(i)}\O \eta, T_{\underline{u}(i)}, \lambda_{\underline{u}(i)}) \text{ for a multi-
indices $\underline{v}$ and $\underline{u}$.}
$$
 Denote by  $ \mb X(\I)$ the appropriate objects corresponding to the interval $[0, \I)$.
 We define the  functional $M$ on $\mf{C} \langle \mb{x} \rangle$ by the following action on monomials: $M(1, \I; \mb{X}) = 1$ and 
\[
M(\mb{x}_{\underline{u}}, \I; \mb{X}) =
\state{\big( \mb{X}^{\underline{u}(1)}(\I)\dots \mb{X}^{\underline{u}(n)}(\I)\big)},
\]
and extend linearly. 
We will call $M(\cdot, \I; \mb{X})$ the \emph{moment functional} of the process $\mb{X}$ at time $\I$.
If we equip $\mf{C} \langle \mb{x} \rangle$ with a conjugation $\ast$ extending the conjugation on $\mf{C}$ so that each $x_i^\ast = x_i$, it is clear that $M$ is a positive functional, i.e.\ $M(f f^\ast, \I; \mb{X}) \geq 0$ for all $f \in \mf{C} \langle \mb{x} \rangle$. 
\end{definition}

In order to keep the essentially self-adjoint operators, we should make an additional assumption on the family of operators $\set{T_j}_{j=1}^k$ (see \cite [Subsection 2.5]{Ans01}). Moreover, we emphasize that, since the second operator $\Bar T$ is  bounded and self-adjoint no additional restriction on it are necessary. 

\begin{assumption}\label{assumption:analitic}
Now fix a $k$-tuple $\set{T_j}_{j=1}^k$ of essentially self-adjoint operators on a common dense domain ${D} \subset V$, $T_j({D}) \subset {D}$, a $k$-tuple $\set{\xi_j}_{j=1}^k \subset {D}$ of vectors, and $\set{\lambda_j}_{j=1}^k \subset \mf{R}$. We will make an extra assumption that 
\begin{equation}
\label{Uniqueness}
\begin{split}
&\forall i, j \in [1 \ldots k], l \in \mf{N}, \underline{u} \in [1 \ldots k]^l, \\
&\mb{T}_{\underline{u}} \xi_i = T_{\underline{u}(1)} T_{\underline{u}(2)} \ldots T_{\underline{u}(l)} \xi_i \text{ is an analytic vector for } T_j, \\
&\text{ and } {D} = \ls{\set{\mb{T}_{\underline{u}} \xi_i: i \in [1 \ldots k], l \in \mf{N}, \underline{u} \in [1 \ldots k]^l}}.
\end{split}
\end{equation}
\end{assumption}

Now we define the joint cumulants of $\mb{X}$. 
\begin{definition}\label{def:cumulantyLevego}
The \emph{ cumulant} corresponding to the partition $\pi \in \P^{\O }(n)$, the block $B=(i_1,\dots,i_k)\in\pi|_{[\bar n]}$ and the sub-monomial $\mb{x}_{(B: \underline{u})}:=x_{\underline{u}(i_1)}\dots x_{\underline{u}(i_k)}$ is
\begin{align*}
& R(\mb{x}_{(B: \underline{u})}, \I):  = 
\begin{cases}
\I \lambda_{\underline{u} (i_1)}& \text{ if } k=1, \\
\I\langle \xi_{\underline{u}{(i_1)}}, T_{{\underline{u}{(i_2)}}} \dots T_{{\underline{u}{(i_{k-1})}}}\xi_{\underline{u}{(i_k)}} \rangle_{H} 
 & \text{ if } k \geq 2.
\end{cases}
\\& R_{\pi|_{[ n]}}(\mb{x}_{\underline{u}},\I; \mb{X}): = \prod_{B\in\pi|_{[n]}}R(\mb{x}_{(B: \underline{u})}, \I)
\end{align*}
Sometimes for a one-dimensional  process  we will write 
$$R_n(\mb X(\I))=R_n(\underbrace{\mb X(\I),\dots,\mb X(\I)}_{n \text{ times}}).$$ 
 In particular, we have 
$$
 R_{\hat{1}_{n}}(\mb{x}_{\underline{u}}, \I; \mb{X}) =\I\langle \xi_{\underline{u}{(1)}}, T_{{\underline{u}{(2)}}} \dots T_{{\underline{u}{({n-1})}}}\xi_{\underline{u}{(n)}}  \rangle_{H}$$ 
i.e. the $n$-th joint cumulant of $\mb{X}$ at time $\I$. Note that the functional $R(\cdot, \I; \mb{X})$ can be linearly extended to all of $\mf{C} \langle \mb{x} \rangle$. We call this functional the \emph{cumulant functional} of the process $\mb{X}$ at time $\I$. 
\end{definition}
An explicit formula for moments in terms of cumulants, involving the number of restricted crossings and  nestings of a partition follows from 
 Theorem   \ref{thm2} and we have  
\begin{align}\label{MomnetCumulantLevy}
M(\mb{x}_{\underline{u}}, \I; \mb{X}) 
&= \sum_{\pi \in  \PB(n)} q^{\rc(\pi|_{[n]})}t^{\nest(\pi|_{[n]})}\v^{\rc(\pi|_{[\bar n]})}\w^{\nest(\pi|_{[\bar n]})} R_{\pi|_{[ n]}}(\mb{x}_{\underline{u}},\I; \mb{X}).
\end{align}
This is because all diagonal cumulants of order at least two from Definition \ref{Defi:Cumulant} involved with $\bar H$ are equal to one.

\begin{remark} We emphasize that  the  general algebraic notation of independence introduced by  K\"{u}mmerer \cite{Kummerer} (pyramidally independent increments) is not true for quadrabasic L\'{e}vy process, therefore  we do not use this argument
in our proof. 
Note that in the special case  $(q,1,0,1)$-L\'{e}vy process  has this property which follows directly from equation  \eqref{MomnetCumulantLevy}.
\end{remark}

\subsection{Multiple stochastic measures }
Rota and Wallstrom \cite{Rota}  introduced  the notion of partition-dependent stochastic measures. Their approach  unifies a number of combinatorial results in probability theory, for example the It\^{o} multi-dimensional stochastic integrals through the usual product measures, by employing the M\"{o}bius inversion on the lattice of all partitions.
We shall show that, in our context, such an approach has also some potential. 

Fix $\I>0$. For $N$ and a subdivision of $[0,\I)$ into disjoint ordered half-open intervals $\mc{I} = \set{I_1, I_2, \ldots, I_N}$, let $\delta(\mc{I)} = \max_{1 \leq i \leq N} \abs{I_i}$. 
 Fix a monomial $\mb{x}_{\underline{u}} \in \mf{C} \langle x_1, x_2 , \ldots, x_k\rangle$ of degree $n$.

\begin{definition}
The  $n$-dimensional \emph{diagonal measure} corresponding to the monomial $\mb{x}_{\underline{u}}$ and the subdivision $\mc{I}$ is
\begin{align*}
\Delta_n(\mb{x}_{\underline{u}}, \I; \mb{X},\mc{I}) = \sum_{i=1}^N\prod_{j=1}^n  \mb{X}^{\underline{u}(j)}(I_{\underline{v}(i)}) =\sum_{i=1}^N \mb{X}^{\underline{u}(1)}(I_{\underline{v}(i)})\dots \mb{X}^{\underline{u}(n)}(I_{\underline{v}(i)})
\end{align*}
where $n\in\N.$ The $n$-dimensional \emph{diagonal measure} corresponding the monomial $\mb{x}_{\underline{u}}$ is
\[
\Delta_n(\mb{x}_{\underline{u}}, \I; \mb{X}) = \lim_{\delta(\mc{I}) \rightarrow 0} \Delta_n(\mb{x}_{\underline{u}}, \I; \mb{X},\mc{I})
\]
if the limit, along the net of subdivisions of the interval $[0, \I)$, exists.
\end{definition}
\begin{remark}
If an element of $\mb{X}$ does not depend on ${\underline{u}}$, i.e. this is a one-dimensional process, then we write  $\Delta_n( \I; \mb{X})$.
\end{remark}

\begin{proposition}
\label{Prop:Cumulant}
For the monomial $\mb{x}_{\underline{u}}$ of degree $n$, the cumulant functional of the  quadrabasic  L\'{e}vy process $\mb{X}$ is given by
\[
R_{\hat{1}_{n}}(\mb{x}_{\underline{u}}, \I; \mb{X}) =\lim_{\delta(\mc{I}) \rightarrow 0}\state(\Delta_n(\mb{x}_{\underline{u}}, \I; \mb{X})).
\]
\end{proposition}
\begin{remark}
We emphasize that the existence of the limit $\lim_{\delta(\mc{I}) \rightarrow 0} \Delta_n(\mb{x}_{\underline{u}}, \I; \mb{X},\mc{I}) $   is not essential in Proposition \ref{Prop:Cumulant}. 
\end{remark}
\begin{proof}
By definition, we have to calculate 
\begin{align*}
\lim_{\delta(\mc{I}) \rightarrow 0}\sum_{i=1}^N\state\big(\mb{X}^{\underline{u}(1)}(I_{\underline{v}(i)})\dots \mb{X}^{\underline{u}(n)}(I_{\underline{v}(i)})\big)  .
\end{align*}
Let us denote  $R_{\sigma|_{[ n]}}(\mb{x}_{\underline{u}}, \mc{I}; \mb{X}):=R_{\sigma|_{[ n]}}(\mb{x}_{\underline{u}}, 1; \mb{X}) \prod_{B\in\sigma|_{[n]}}\abs{I_{\underline{v}(B)}}$,
where we write $\underline{v}(B)$ for any $\underline{v}(i)$, $i\in B$.  
Using this  and Theorem  \ref{thm2}, with  $\pi \in \P(n)$   we see that
\begin{align*}
&\sum_{i=1
}^N\state\big(\mb{X}^{\underline{u}(1)}(I_{\underline{v}(i)})\dots \mb{X}^{\underline{u}(n)}(I_{\underline{v}(i)})\big)
\\&=\sum_{i=1 }^N\sum_{\substack{\sigma\in \PB(n)\\  \sigma|_{[ n]}\leq {\hat{1}}_n}} q^{\rc(\sigma|_{[ n]})}t^{\nest(\sigma|_{[ n]})}\v^{\rc(\sigma|_{[\bar n]})}\w^{\nest(\sigma|_{[\bar n]})}R_{\sigma|_{[ n]}}(\mb{x}_{\underline{u}}, \mc{I}; \mb{X})
\\
&=\sum_{i=1 }^N\sum_{\substack{\sigma\in \PB(n)\\  \sigma|_{[ n]}= {\hat{1}}_n }} q^{\rc(\sigma|_{[ n]})}t^{\nest(\sigma|_{[ n]} )}\v^{\rc(\sigma|_{[\bar n]} )}\w^{\nest(\sigma|_{[\bar n]} )}R_{\sigma|_{[ n]}}(\mb{x}_{\underline{u}}, \mc{I}; \mb{X})\\
&+\sum_{i=1 }^N\sum_{\substack{\sigma\in \PB(n)\\  \sigma|_{[ n]}< {\hat{1}}_n }}q^{\rc(\sigma|_{[ n]})}t^{\nest(\sigma|_{[ n]})}\v^{\rc(\sigma|_{[\bar n]})}\w^{\nest(\sigma|_{[\bar n]})}R_{\sigma|_{[ n]}}(\mb{x}_{\underline{u}}, \mc{I}; \mb{X})
\intertext{By    Remark \ref{rem:partycje}  (4)  if $\sigma\in \PB(n)$ and $  \sigma|_{[n]}=\hat{1}_{n}  $ then $\sigma=\hat{1}_{n}\O \hat{ 1}_{\bar n} $. We now expand further and obtain }
&=
s{R_{\hat{1}_{n}}(\mb{x}_{\underline{u}},1; \mb{X}) }
+\underbrace{ \sum_{i=1
}^N\sum_{\substack{\sigma\in \PB(n)\\  \sigma|_{[ n]}< {\hat{1}}_n }}  \prod_{B\in\sigma}\abs{I_{\underline{v}(B)}}q^{\rc(\sigma|_{[ n]})}t^{\nest(\sigma|_{[ n]})}\v^{\rc(\sigma|_{[\bar n]})}\w^{\nest(\sigma|_{[\bar n]})}R_{\sigma|_{[ n]}}(\mb{x}_{\underline{u}},1; \mb{X})}_{(\star)}.
\end{align*}
Now we show that the limit of each of the remaining terms  $(\star)$ is 0. Indeed, if   $\sigma|_{[ n]}< \hat{1}_{n}$, then $\#\sigma|_{[ n]}>1$
and assume that the number of them is $d$, where $d\geq 2$.  
We may
assume $\delta(\mc{I})<1$, and thus for  each fixed $\pi$  the term $(\star)$  is bounded by 
\[
C \sum_{i=1}^N \abs{I_i}^d \leq C \delta(\mc{I}) \I^{d -1},
\]
where  $C$ is a constant independent of the subdivision $\mc{I}$. Therefore such a term converges to $0$ as $\delta(\mc{I}) \rightarrow 0$.

\end{proof}

\subsubsection{The higher diagonal measures}
Now we calculate all the higher diagonal measures (this object appears in the functional It\^{o} formula for L\'{e}vy processes). 

\begin{proposition}
\label{Prop:Gaussian}
For a one-dimensional self-adjoint process $\mb X(\I)= p_\I(\xi\O \eta , T, \lambda)$ the  $n$-dimensional {diagonal measure} with $n \geq 2$ exists in the $L^2$-norm with respect
to $\state(\cdot)$, and equals
\[
\Delta_n(\I; \mb{X})  = p_\I(T^{n-1} \xi\O \eta, T^n, \ip{\xi}{T^{n-2} \xi}).
\]
\end{proposition}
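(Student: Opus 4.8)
The plan is to adapt the standard argument of Anshelevich \cite{Ans01} for $q$-L\'evy processes to our quadrabasic setting, using Theorem~\ref{thm2} as the source of all moment computations. First I would recall that by Proposition~\ref{Prop:Cumulant} the vacuum expectation $\state(\St{\pi}(\I;\mb X,\mc I))$ converges, for each partition $\pi$, to a sum over $\sigma\in\PB(n)$ with $\sigma|_{[n]}=\pi$ of the weighted cumulant products. Applied to $\pi=\hat 1_n$ this gives $\state(\Delta_n(\I;\mb X))=R_{\hat 1_n}(\I;\mb X)=\I\langle\xi,T^{n-2}\xi\rangle_H$, which already tells us what the first moment of the claimed answer must be. To pin down $\Delta_n(\I;\mb X)$ itself (not just its expectation), I would show the Cauchy criterion in $L^2(\state)$: for two subdivisions, estimate $\state\big(|\St{\hat1_n}(\I;\mb X,\mc I)-\St{\hat1_n}(\I;\mb X,\mc I')|^2\big)$ using the moment--cumulant formula \eqref{MomnetCumulantLevy}, observing that the diagonal terms cancel and the off-diagonal contributions are $O(\delta(\mc I))$ exactly as in the bound for term $(II)$ in the proof of Proposition~\ref{Prop:Cumulant}. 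This yields existence of the $L^2$-limit $\Delta_n(\I;\mb X)$.

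Next I would identify the limit with the asserted operator $Y:=p_\I(T^{n-1}\xi\O\eta,\,T^n,\,\ip{\xi}{T^{n-2}\xi})$. The cleanest route is to show that $\Delta_n(\I;\mb X)-Y$ has vanishing $L^2(\state)$-norm, i.e.\ that $\state\big((\Delta_n-Y)^*(\Delta_n-Y)\big)=0$; since $\state$ is faithful on the relevant subspace this forces equality. Equivalently, since both sides are (limits of) self-adjoint operators, it suffices to check that all mixed vacuum moments
\[
\lim_{\delta(\mc I)\to0}\state\big(\St{\hat1_n}(\I;\mb X,\mc I)\,\mb X(\I_{j_1})\cdots\mb X(\I_{j_m})\big)
=\state\big(Y\,\mb X(\I_{j_1})\cdots\mb X(\I_{j_m})\big)
\]
agree for all $m$ and all interval choices, and likewise with $\Delta_n$ on the right of the string; both computations reduce, via Theorem~\ref{thm2}, to matching sums over partitions in which one block of size $n$ (coming from the diagonal constraint $\ker\underline v=\hat1_n$) gets merged, and the cumulant attached to a merged block $\{$old $n$-block$\}\cup\{$extra legs$\}$ is precisely the cumulant one reads off from $p_\I(T^{n-1}\xi\O\eta,T^n,\cdot)$ by Definition~\ref{Defi:Cumulant} --- the operator $T^n$ is inserted between consecutive legs and $T^{n-1}\xi$ sits at the outer legs, while the scalar $\ip{\xi}{T^{n-2}\xi}$ accounts for the singleton case. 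The restricted-crossing and nesting weights are unaffected because collapsing a set of intervals to a point does not change the crossing/nesting pattern of the arcs of $\sigma$.

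Throughout, the $\bar H$-component contributes trivially: $\bar T=\id$ and $\bar H$ is one-dimensional with unit $\eta$, so every diagonal cumulant of order $\geq 2$ on the $\bar H$ side equals $1$ and the diagonal measure on that factor is just the identity, consistently with writing $T^{n-1}\xi\O\eta$ and $\bar T=\id$ on the right-hand side. The main obstacle I expect is the bookkeeping in the $L^2$ Cauchy estimate and in the moment-matching step: one must carefully track which partitions $\sigma\in\PB(n)$ survive the $\ker\underline v=\hat1_n$ constraint after a merge, and verify that the surviving weighted cumulant sum is exactly the moment expansion of the single process $p_\I(T^{n-1}\xi\O\eta,T^n,\ip{\xi}{T^{n-2}\xi})$ --- i.e.\ that no spurious crossing/nesting exponents are introduced when the $n$ collapsed legs are threaded through the arcs of the ambient partition. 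This is the same phenomenon handled in \cite{Ans01}, and the argument adapts verbatim once the quadrabasic weights are carried along as inert multiplicative factors.
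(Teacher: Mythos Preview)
Your plan is correct and rests on the same two ingredients as the paper: Proposition~\ref{Prop:Cumulant} for computing $\lim_{\delta(\mc I)\to0}\state(\St{\pi}(\I;\mb X,\mc I))$, and the cumulant identity $R(\underbrace{\mb X,\dots,\mb X}_{k},\mb Y_n)=R_{k+n}(\mb X)$ for $k\ge0$ coming from Definition~\ref{def:cumulantyLevego}.

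The organization differs slightly. You propose a two-stage argument: first a Cauchy estimate to secure existence of $\Delta_n$, then an identification step (either via $\state((\Delta_n-Y)^*(\Delta_n-Y))=0$ or, more ambitiously, via matching all mixed moments with arbitrary strings of $\mb X$'s). The paper is more economical: it sets $\mb Y_n(\I):=p_\I(T^{n-1}\xi\O\eta,T^n,\ip{\xi}{T^{n-2}\xi})$ from the outset and computes directly
\[
\state\big([\St{\hat1_n}(\I;\mb X,\mc I)-\mb Y_n(\I)]^2\big)
=\state(\St{\hat1_n}^2)-\state(\St{\hat1_n}\mb Y_n)-\state(\mb Y_n\St{\hat1_n})+\state(\mb Y_n^2),
\]
showing via Proposition~\ref{Prop:Cumulant} and the cumulant identity that each of the four terms tends to $R_{2n}(\mb X(\I))+R_n^2(\mb X(\I))$, whence the whole expression tends to $0$. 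This single computation yields existence of the $L^2$-limit and its value simultaneously, so your separate Cauchy step is redundant, and the full mixed-moment matching (your alternative route) is more than is needed---only the four second-order cross-moments have to be computed. Your remark about faithfulness of $\state$ is also unnecessary: the claim is convergence in $L^2(\state)$, not operator equality, so $\state((\cdot)^*(\cdot))=0$ is already the target statement.
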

\begin{proof}
Let $\mb{Y}_n(I)= p_I(T^{n-1} \xi\O \eta, T^n, \ip{\xi}{T^{n-2} \xi})$ be the  process from the right-hand side of above theorem.  
We will show that 
$$\lim_{\delta(\mc{I}) \rightarrow 0} \norm{ \Delta_n( \I; \mb{X},\mc{I})-\mb{Y}_n(\I)}_2=0.$$
where $\Delta_n( \I; \mb{X},\mc{I})=\sum_{\substack{1\leq i\leq N}}
\mb{X}^n(I_{\underline{v}(i)}).$
First expand 
\begin{align*}
[\Delta_n(\I; \mb{X},\mc{I})-\mb{Y}_n(\I)]^2&=\Delta_n^2( \I; \mb{X},\mc{I})
-\Delta_n( \I; \mb{X},\mc{I})\mb{Y}_n(\I)-\mb{Y}_n(\I)\Delta_n( \I; \mb{X},\mc{I})+\mb{Y}_n^2(\I). 
\end{align*}
We will show that in the limit (as $\delta(\mc{I}) \rightarrow 0$) the first two factors of above expansion disappear.
We start with the first factor 
\begin{align*}
& \state\big(\Delta_n^2( \I; \mb{X},\mc{I})\big)=\state\big(\sum_{\substack{1\leq i\leq N\\
\\1\leq j \leq N}}
\mb{X}^n(I_{\underline{v}(i)})
\mb{X}^n(I_{\underline{v}(j)}))\big)
\\&=\state\big(\sum_{i=1}^N
\mb{X}^{2n}(I_{\underline{v}(i)})\big)
 +\state\big(\sum_{i\neq j}^N
\mb{X}^n(I_{\underline{v}(i)})
\mb{X}^n(I_{\underline{v}(j)})\big)
\xrightarrow{\delta(\mc{I}) \rightarrow 0} 
R_{2n}(\mb{X}(\I))+R_{n}^2(\mb{X}(\I)).
\intertext{Indeed, by Proposition \ref{Prop:Cumulant} it follows that the first factor converges to $R_{2n}(\mb{X}(\I))$. Now using  cumulant expansions of the second factor we get }
& \sum_{i\neq j}^N\state\big(
\mb{X}^n(I_{\underline{v}(i)})\big)\state\big(
\mb{X}^n(I_{\underline{v}(j)})\big)=\sum_{i\neq j}^N {R_{n}(\mb{X}(I_{\underline{v}(i)}) )}{R_{n}(\mb{X}(I_{\underline{v}(j)})) }
\\& + \sum_{i\neq j}^N\sum_{\substack{\sigma\in \PB(2n)\\  \sigma|_{[ 2n]}< {\hat{1}}_{2n} }}  q^{\rc(\sigma|_{[2 n]})}t^{\nest(\sigma|_{[2n]})}\v^{\rc(\sigma|_{[\overline{2 n}]})} \v^{\rc(\sigma|_{[\overline{2n}]})} R_{\sigma|_{[ 2n]}}(\mb{X}(I_{\underline{v}(i)}) R_{\sigma|_{[ 2n]}}(\mb{X}(I_{\underline{v}(j)})  
={R_{n}^2(\mb{X}(s) )}\\& \underbrace{-\sum_{i=1}^N {R_{n}^2(\mb{X}(I_{\underline{v}(i)}) )}   +\sum_{i\neq j}^N\sum_{\substack{\sigma\in \PB(2n)\\  \sigma|_{[ 2n]}< {\hat{1}}_{2n} }}  q^{\rc(\sigma|_{[2 n]})}t^{\nest(\sigma|_{[2n]})}\v^{\rc(\sigma|_{[\overline{2 n}]})} \v^{\rc(\sigma|_{[\overline{2n}]})} R_{\sigma|_{[ 2n]}}(\mb{X}(I_{\underline{v}(i)}) R_{\sigma|_{[ 2n]}}(\mb{X}(I_{\underline{v}(j)}) }_{(\star)}
\end{align*}
by the same argument as in Proposition \ref{Prop:Cumulant} we have that  each term in the expression   $(\star)$   converges to zero. 
Now we focus on the second factor, i.e. 
\begin{align*}
&\state\big(\Delta_n( \I; \mb{X},\mc{I})\mb{Y}_n(\I)\big)=\state\big(\sum_{\substack{1\leq i \leq N\\
\\1\leq j\leq N
}}
\mb{X}^n(I_{\underline{v}(i)}) \mb{Y}_{n}(I_{\underline{v}(j)})\big).
\intertext{By similar argument as we used in the first part, we conclude that }
&\xrightarrow{\delta(\mc{I}) \rightarrow 0}R\big(\underbrace{\mb{X}(s),\dots ,\mb{X}(s)}_{n \text{ times}},\mb{Y}_{n}(I) \big)+R_n\big(\mb{X}(s))R_1(\mb{Y}_{n}(s) \big)
\end{align*}
By Definition \ref{def:cumulantyLevego}
we  observe that mixed  cumulants of   $\mb{X}(I) $ 
 and $\mb{Y}_{n}(I)$ are coincident 
in a sense that for $n\geq 2$ and $k\geq 0$ we have 
$$R\big(\underbrace{\mb{X}(I),\dots ,\mb{X}(I)}_{k \text{ times}},\mb{Y}_{n}(I) \big)=R_{k+n}\big(\mb{X}(I)\big).
$$ 
So, the last limit expression reduces to $R_{2n}(\mb{X}(\I))+R_{n}^2(\mb{X}(\I)).$
Hence, the first two factors disappear and similarly we conclude for the two remaining elements. 
\end{proof}

\subsection{Generators}
\label{Sec:Generators}
The analysis in this subsection is partially motivated by  papers \cite{Ans01} and \cite{SchurCondPos}.

\begin{definition}\label{def:DefGeneratory}
\begin{enumerate}[I.]
\item A functional $\psi$ on $\mf{C} \langle \mb{x} \rangle$ is \emph{conditionally positive} if its restriction to the subspace of polynomials with zero constant term is positive semi-definite.

\item  
We say that a functional $\psi$ on $\mf{C} \langle \mb{x} \rangle$ is \emph{a generators} of $(\q)$-L\'{e}vy process if it is a derivative of the moment functional at zero.

\item 
 We say that the functional $\psi$ is \emph{analytic} if for any $i$ and any multi-index $\underline{u}$, 
\[
\limsup_{n \rightarrow \infty} \frac{1}{ \sqrt[n]{n!}} \psi[(\mb{x}_{\underline{u}})^\ast x_i^{2n} \mb{x}_{\underline{u}}]^{1/2n} < \infty.
\] 
\end{enumerate}
\end{definition}
\begin{remark}\label{rem:generatory}
The family of the moment functionals of a  $(\q)$-L\'{e}vy process is determined by its cumulant functional. 
Indeed, by equation \eqref{MomnetCumulantLevy},  we have 
\begin{align*}
M(\mb{x}_{\underline{u}}, \I; \mb{X}) 
&= \sum_{\pi \in  \PB(n)} q^{\rc(\pi|_{[n]})}t^{\nest(\pi|_{[n]})}\v^{\rc(\pi|_{[\bar n]})}\w^{\nest(\pi|_{[\bar n]})} \I^{\#\MPair(\pi)}   R_{\pi|_{[ n]}}(\mb{x}_{\underline{u}},1; \mb{X}),
\end{align*}
which implies that this is a polynomial in $\I$ for   $\pi=\hat{1}_{n}\O \hat{1}_{\bar n}$, and so by differentiating this equality, we obtain
$$
\frac{d}{d \I}  M(\mb{x}_{\underline{u}}, \I; \mb{X})\Bigr|_{\I=0} = R_{\hat{1}_{n}} (\mb{x}_{\underline{u}}, 1;\mb{X}) 
=\langle \xi_{\underline{u}{(1)}}, T_{{\underline{u}{(2)}}} \dots T_{{\underline{u}{({n-1})}}}\xi_{\underline{u}{(n)}}  \rangle_{H}.
$$
\end{remark}
The following proposition is an analog of the Schoenberg correspondence for
our context. 
The proof is almost identical to one of \cite{Ans01,SchurCondPos} and will be omitted.
We reproduce the proof on arXiv version for the reader's convenient.
\begin{proposition} \label{twr:twanaliticconditionallypositive}
A functional $\psi$ is analytic and conditionally positive if and only if it is the generator of the family of the moment functionals for some  $(\q)$-L\'{e}vy process.
\end{proposition}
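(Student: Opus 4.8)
The plan is to prove the two implications separately, using the moment–cumulant formula \eqref{MomnetCumulantLevy} together with Remark \ref{rem:generatory}, which already tells us that the generator of a $(\q)$-L\'evy process is precisely the $n$-th joint cumulant functional $\psi(\mb{x}_{\underline u}) = R_{\hat 1_n}(\mb{x}_{\underline u},1;\mb{X}) = \langle \xi_{\underline u(1)}, T_{\underline u(2)}\cdots T_{\underline u(n-1)}\xi_{\underline u(n)}\rangle_H$ (with $\psi(x_i)=\lambda_i$ in degree one). So the content is a \emph{reconstruction} statement: an abstract $\psi$ that is analytic and conditionally positive must arise from such a cumulant functional.

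\emph{Necessity.} First I would show that if $\psi$ is a generator of a $(\q)$-L\'evy process, then it is analytic and conditionally positive. Analyticity follows from Assumption \ref{assumption:analitic}: by Remark \ref{rem:generatory}, $\psi[(\mb{x}_{\underline u})^\ast x_i^{2n}\mb{x}_{\underline u}]$ equals the $(2n+2\ell)$-th cumulant $\langle \xi_{\underline u}, T_i^{2n-1}\mb{T}_{\underline u}\xi_{\underline u}\rangle_H$-type expression built from $\mb{T}_{\underline u}\xi_i$, and since $\mb{T}_{\underline u}\xi_i$ is an analytic vector for $T_i$, the growth of these inner products in $n$ is controlled so that $\limsup_n (n!)^{-1/2n}\psi[\cdots]^{1/2n}<\infty$. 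Conditional positivity: on polynomials with zero constant term, $\psi$ is given by the inner product $\langle\cdot,\cdot\rangle_H$ pulled back through the GNS-type map $f=\sum a_{\underline u}\mb{x}_{\underline u}\mapsto \sum a_{\underline u}\,\mb{T}_{\underline u}^{(\text{shift})}\xi$ (reading $x_{i_1}\cdots x_{i_k}\mapsto T_{i_2}\cdots T_{i_{k-1}}\xi_{i_k}$ paired against $\xi_{i_1}$); positivity of $\langle\cdot,\cdot\rangle_H$ forces $\psi(ff^\ast)\ge 0$ when the constant term vanishes.

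\emph{Sufficiency.} This is the substantive direction. Given $\psi$ analytic and conditionally positive, I would: (i) form the positive semidefinite sesquilinear form $\langle f,g\rangle_\psi := \psi(f^\ast g) - \overline{\delta_0(f)}\psi(g) - \psi(f^\ast)\delta_0(g) + \overline{\delta_0(f)}\delta_0(g)\,\psi(1)$ on $\mf{C}\langle\mb{x}\rangle$ — equivalently work on the quotient by constants — and let $K$ be the Hilbert space completion of the associated pre-Hilbert space; (ii) define $\xi_i$ as the image of $x_i$, and $T_i$ as the operator implementing left multiplication by $x_i$ on this quotient (i.e. $T_i(\mb{x}_{\underline u}\text{-class}) = (x_i\mb{x}_{\underline u})\text{-class}$), which is symmetric for $\langle\cdot,\cdot\rangle_\psi$ by associativity and the $\ast$-structure; (iii) use the analyticity hypothesis to invoke Nelson's analytic vector theorem (as in Proposition \ref{Prop:samosprzezone}) to conclude each $T_i$ is essentially self-adjoint on the common dense domain $D=\ls{\{\mb{T}_{\underline u}\xi_i\}}$, so that Assumption \ref{assumption:analitic} holds; (iv) set $\lambda_i := \psi(x_i)$, build $H = L^2(\mf{R}_+)\otimes K$ and the corresponding quadrabasic L\'evy process $\mb X(\I)=p_\I(\xi\O\eta,T,\lambda)$, and check via \eqref{MomnetCumulantLevy} and Remark \ref{rem:generatory} that its generator is exactly $\psi$ — the degree-$n$ cumulant $\langle\xi_{\underline u(1)},T_{\underline u(2)}\cdots T_{\underline u(n-1)}\xi_{\underline u(n)}\rangle_H = \langle x_{\underline u(1)}, x_{\underline u(2)}\cdots x_{\underline u(n)}\rangle_\psi = \psi(\mb{x}_{\underline u})$ by construction.

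\emph{Main obstacle.} The delicate point is step (iii): ensuring that the multiplication operators $T_i$ on the quotient space are not merely symmetric but \emph{essentially self-adjoint} on a common core, and that the generating vectors $\mb{T}_{\underline u}\xi_i$ are analytic for every $T_j$. The analyticity hypothesis in Definition \ref{def:DefGeneratory}(III) is tailored to give $\|T_i^n\xi_i\|_\psi$-control, but one must check it propagates to all vectors $\mb{T}_{\underline u}\xi_j$ and all operators $T_i$ simultaneously — this requires a Cauchy–Schwarz / interpolation argument bounding $\psi[(\mb{x}_{\underline u})^\ast x_i^{2n} \mb{x}_{\underline v}]$ by a geometric mean of the diagonal terms, after which Nelson's theorem applies verbatim as in the proof of Proposition \ref{Prop:samosprzezone}. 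A secondary nuisance is keeping the bookkeeping of the $\O\bar H$ tensor leg trivial (using $\bar T=\id$, $\dim\bar H=1$), which is exactly what makes \eqref{MomnetCumulantLevy} reduce the quadrabasic cumulants to the one-sided ones $\langle\cdot,T\cdots T\cdot\rangle_H$ and lets the whole reconstruction take place inside $K$ alone.
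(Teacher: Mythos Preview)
Your proposal is correct and follows essentially the same GNS-type construction as the paper (which in turn follows Anshelevich \cite[Proposition 4.3]{Ans01}): form the semi-inner product $\langle f,g\rangle_\psi=\psi[(f-\delta_0(f))^\ast(g-\delta_0(g))]$, quotient and complete to get $K$, set $\xi_i=\rho(x_i)$, $T_i=\Gamma(x_i)$, $\lambda_i=\psi(x_i)$, and then build the process on $L^2(\mf{R}_+)\otimes K$ tensored with the trivial one-dimensional $\bar H$.

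The one point where you diverge is your ``main obstacle'' in step (iii), and here you have made extra work for yourself by misreading Definition \ref{def:DefGeneratory}(III). The analyticity hypothesis is \emph{already} stated for arbitrary multi-index $\underline{u}$ and arbitrary $i$: it bounds $\psi[(\mb{x}_{\underline{u}})^\ast x_i^{2n}\mb{x}_{\underline{u}}]^{1/2n}$, not merely $\psi[x_i^{2n+2}]^{1/2n}$. Since $\Gamma(x_i)\rho(\mb{x}_{\underline{u}})=\rho(x_i\mb{x}_{\underline{u}})$, one has directly $\norm{T_i^n\rho(\mb{x}_{\underline{u}})}_\psi^{2}=\psi[(\mb{x}_{\underline{u}})^\ast x_i^{2n}\mb{x}_{\underline{u}}]$, so the hypothesis gives analyticity of every $\rho(\mb{x}_{\underline{u}})$ for every $T_i$ with no Cauchy--Schwarz interpolation needed. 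Nelson's theorem then applies immediately, exactly as in the paper.
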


\begin{proof}
Suppose $\psi$ is the generator of the family of moment functionals $M(\cdot, \I; \mb{X})$ for a  $(\q)$-L\'{e}vy process $\mb{X}(\I)$. 
By Definition \ref{def:DefGeneratory} we have $\psi(\mb{x}_{\underline{u}}) = R_{\hat{1}_{m}} (\mb{x}_{\underline{u}}, 1; \mb{X})$,
which is means that the cumulant functional is conditionally positive indeed: 
$$
 R_{\hat{1}_{n}} ((\mb{x}_{\underline{u}})^\ast\mb{x}_{\underline{u}}, 1;\mb{X}) =
\norm{T_{{\underline{u}{({1})}}}\dots T_{{\underline{u}{(n-1)}}}\xi_{\underline{u}{(n)}}}_H^2
\geq 0. 
$$
For $\mb{x}_{\underline{u}}$ of degree $m$ we have 
\begin{align*}
\limsup_{n \rightarrow \infty} \frac{1}{ \sqrt[n]{n!}} \psi[(\mb{x}_{\underline{u}})^\ast x_i^{2n} \mb{x}_{\underline{u}}]^{1/2n}
&= \limsup_{n \rightarrow \infty} \frac{1}{ \sqrt[n]{n!}}  \langle{\xi_{\underline{u}(m)}}{\prod_{j=m-1}^{1} T_{\underline{u}(j)} T_i^{2n} \prod_{j=1}^{m-1} T_{\underline{u}(j)} \xi_{\underline{u}(m)}}\rangle^{1/2n} \\
&= \limsup_{n \rightarrow \infty} \frac{1}{ \sqrt[n]{n!}}  \|{T_i^{n} \prod_{j=1}^{m-1} T_{\underline{u}(j)} \xi_{\underline{u}(m)}}\|^{1/n} < \infty
\end{align*}
by Assumption \ref{assumption:analitic}.

Now suppose $\psi$ is conditionally positive and analytic.  

 The first step in the proof is that of Anshelevich \cite[Proposition 4.3]{Ans01} to show the existence of $T_i$ and space $K$ (the proof is practically identical to that of  this result and we provide an outline of the details for the reader's convenience).
 Since $\psi$ is positive then we can define semi-definite inner product on the space $\mf{C} \langle \mb{x} \rangle$ by 
$$\ip{f}{g}_{\psi} = \psi[(f - \delta_0(f))^\ast(g - \delta_0(g))].$$ 
Let $\mc{N}_\psi=\{a\in \mf{C}\mid\ip{a}{a}_{\psi}=0\}$ and  
let $K$ be the Hilbert space obtained by completing the quotient $\mf{C} \langle \mb{x} \rangle / \mc{N}_\psi $ with respect to this inner product. Denote by $\rho$ the canonical mapping $\mf{C} \langle \mb{x} \rangle \rightarrow K$, let ${D}$ be its image, and for $f, g \in \mf{C} \langle \mb{x} \rangle$ define the operator $\Gamma(a): {D} \rightarrow {D}$ by 
\[
\Gamma(f) \rho(g) = \rho(f g) - \rho(f) \delta_0(g).
\]
The operator $\Gamma$ is well defined since, by the Cauchy-Schwartz inequality, 
\[
\norm{\Gamma(f) \rho(g)}_\psi 
= \psi[(g - \delta_0(g))^\ast f^\ast f (g - \delta_0(g))]
\leq \norm{\rho(g)}_\psi \norm{f^\ast f (g - \delta_0(g))}_\psi.
\]
Clearly ${D}$ is dense in $K$, invariant under $\Gamma(a)$, and $\Gamma(a)$ is symmetric on it if $a$ is symmetric. 
We definite 
$\lambda_i = \psi[x_i], \xi_i = \rho(x_i), T_i= \Gamma(x_i)$. Each $T_i$ takes ${D}$ to itself. By construction, $\Gamma(x_i) \rho(\mb{x}_{\underline{u}}) = \rho(x_i \mb{x}_{\underline{u}})$, and so 
\begin{align*}
\limsup_{n \rightarrow \infty} \frac{1}{ \sqrt[n]{n!}} \norm{T_i^n \rho(\mb{x}_{\underline{u}})}_\psi^{1/n}
&= \limsup_{n \rightarrow \infty} \frac{1}{ \sqrt[n]{n!}} \norm{x_i^n \mb{x}_{\underline{u}}}_\psi^{1/n} \\
&= \limsup_{n \rightarrow \infty} \frac{1}{ \sqrt[n]{n!}} \psi[(\mb{x}_{\underline{u}})^\ast x_i^{2n} \mb{x}_{\underline{u}}]^{1/2n} < \infty
\end{align*}
since the functional $\psi$ is analytic. Therefore each of those vectors is analytic for $T_i$, and the linear span of these vectors is ${D}$. In particular, $T_i$ is essentially self-adjoint on ${D}$.
Let $\HH=H\O\bar H$ where $H=L^2(\mf{R}_+, d x) \otimes K$, $\bar H$ is a one-dimensional Hilbert space spanned by the $\eta$ with norm one   and $\mc{D}=D\O \bar H$.
Finally, we can define the  $(\q)$-L\'{e}vy process  by $\mb{X}^{{\underline u (i)}}(\I) = p_\I(\xi_{\underline u (i)}\O \eta, T_{\underline u (i)}, \lambda_{\underline u (i)})$ and by Remark \ref{rem:generatory} we obtain 
$
R(\mb{x}_{\underline{u}},1; \mb{X}) = \psi[\mb{x}_{\underline{u}}].
$
\end{proof}

\subsection{Convolution }
First, we introduce a product state which reduces to a usual (tensor) product state, for $q=t=\w=1$ and $\v=0$
 while for $q=\v=0$ and $t=\w=1$ it is the (reduced) free product state.

\begin{definition}
\begin{enumerate}[I.]
\item For functional  $\Phi$  on $\mf{C} \langle \mb{x} \rangle$  we  define the functional $\Psi =\Psi(\Phi)$ on $\mf{C} \langle \mb{x} \rangle$  by induction 
\[
\Psi(\mb{x}_{\underline{u}})
= \Phi(\mb{x}_{\underline{u}}) - \sum_{\substack{\pi \in \PB(n) \\ \pi \neq \hat{1}_{n}\O\hat{1}_{\bar n}}} q^{\rc(\pi|_{[n]})}t^{\nest(\pi|_{[n]})}\v^{\rc(\pi|_{[\bar n]})}\w^{\nest(\pi|_{[\bar n]})} \prod_{B \in \pi|_{[n]}} \Psi(\mb{x}_{(B: \underline{u})})
\]
 and extend linearly.  
Let $\Phi_1$, $\Phi_2$ be functionals on $\mf{C} \langle x_1, \ldots, x_{k} \rangle$ and  $\mf{C} \langle y_1,  \ldots, y_{l} \rangle$, respectively. On $\mf{C} \langle \mb{xy} \rangle:=\mf{C} \langle x_1, \ldots, x_{k},y_1,\ldots, y_{l} \rangle$ we define their {product functional} by rule that mixed cumulants of independent quantities equal zero
  \begin{align*}
\begin{aligned}
  \Phi_1 \times_{\q}\Phi_2 : \mf{C} \langle \mb{xy} \rangle &\to \C
  \\
  \Psi(\Phi_1 \times_{\q} \Phi_2)(\mb{xy}_{\underline{u}}) &\mapsto 
\begin{cases}
\Psi(\Phi_1)(\mb{xy}_{\underline{u}}) & \text{ if } \mb{xy}_{\underline{u}}\in \mf{C} \langle x_1,  \ldots, x_{k} \rangle \\
\Psi(\Phi_2)(\mb{xy}_{\underline{u}}) & \text{ if } \mb{xy}_{\underline{u}}\in \mf{C} \langle y_1, \ldots, y_{l} \rangle \\
0 & \text{ otherwise}.
\end{cases}
\end{aligned}
  \end{align*}
\item We define  $\ID$, i.e. the set of all  infinitely divisible  functionals on $\mf{C}\langle \mb{x} \rangle$ by 
\begin{align*}
\ID( k) :&= \set{\Phi: \Phi(\cdot) = M(\cdot, 1; \mb{X})}
 \\&= \set{\Phi: \Psi(\Phi) \text{ is conditionally positive and analytic}}.
\end{align*}
\end{enumerate}
\end{definition}
From the definition above it is clear that  $R(\cdot, \I; \mb{X}) = \Psi (M(\cdot, \I; \mb{X}))$.

\begin{proposition}
\label{Lem:Product}
For $\Phi_1 \in \ID(k)$ and $ \Phi_2 \in \ID(l)$, their product functional is a state.
\end{proposition}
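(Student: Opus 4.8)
The plan is to show that $\Phi_1\times_{\q}\Phi_2$ lies in $\ID(k+l)$, hence by the very definition of $\ID$ it is the moment functional $M(\cdot,1;\mb{X})$ of some $(\q)$-L\'evy process $\mb{X}$, and therefore a state (positivity of $M(\cdot,\I;\mb{X})$ was already observed when $M$ was introduced). So the whole task reduces to: the functional $\Psi(\Phi_1\times_{\q}\Phi_2)$ is conditionally positive and analytic. First I would record, directly from the defining formula of the product functional, that the cumulant functional $\psi:=\Psi(\Phi_1\times_{\q}\Phi_2)$ is the ``free/linear union'' of $\psi_1:=\Psi(\Phi_1)$ and $\psi_2:=\Psi(\Phi_2)$: on a monomial $\mb{xy}_{\underline u}$ that only involves the $x$-variables it equals $\psi_1(\mb{xy}_{\underline u})$, only the $y$-variables it equals $\psi_2(\mb{xy}_{\underline u})$, and on any genuinely mixed monomial it vanishes. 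Both $\psi_1,\psi_2$ are conditionally positive and analytic since $\Phi_1\in\ID(k)$, $\Phi_2\in\ID(l)$.

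The key step is conditional positivity of $\psi$. I would use Proposition \ref{twr:twanaliticconditionallypositive} (or rather the Anshelevich-type GNS construction inside its proof) applied to $\psi_1$ and $\psi_2$ separately: there are Hilbert spaces $K_1,K_2$, vectors $\xi^{(1)}_i,\xi^{(2)}_j$, essentially self-adjoint operators $T^{(1)}_i,T^{(2)}_j$ and scalars $\lambda^{(1)}_i,\lambda^{(2)}_j$ realizing the cumulants of $\psi_1$ and $\psi_2$. Now form $K:=K_1\oplus K_2$, place $\xi_i:=\xi^{(1)}_i\in K_1$, $\xi_{k+j}:=\xi^{(2)}_j\in K_2$, and let $T_i$ act as $T^{(1)}_i$ on $K_1$ and as $0$ on $K_2$, symmetrically $T_{k+j}$ act as $T^{(2)}_j$ on $K_2$ and $0$ on $K_1$; set $\lambda_i=\lambda^{(1)}_i$, $\lambda_{k+j}=\lambda^{(2)}_j$. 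Then for a monomial involving only $x$'s the vector $\mb{T}_{\underline u}\xi_i$ stays in $K_1$ and reproduces the $\psi_1$-cumulant; for one involving only $y$'s it stays in $K_2$ and reproduces the $\psi_2$-cumulant; for a mixed monomial some $T$ with the ``wrong'' index annihilates the vector, giving $0$ — exactly matching $\psi$. Conditional positivity then follows as in Proposition \ref{twr:twanaliticconditionallypositive}: for any $f\in\mf{C}\langle\mb{xy}\rangle$ of degree $n$ with zero constant term one writes $\psi((\mb{x}_{\underline u})^\ast\mb{x}_{\underline u})=\|T_{\underline u(1)}\cdots T_{\underline u(n-1)}\xi_{\underline u(n)}\|_K^2\ge 0$, and extends by linearity/bilinearity exactly as in the loc. cit. computation. (Strictly, one needs the full quadratic form $\psi(f^\ast f)$ for polynomials $f$; this is handled by the same GNS argument, the point being that $\psi$ is realized by a single genuine creation/annihilation/gauge system on the Fock space $\FQ$ built from $K$.) Analyticity of $\psi$ is inherited: the vectors $\mb{T}_{\underline u}\xi_i$ either sit entirely in $K_1$ and are analytic for $T^{(1)}_j$ (hence for $T_j$, since $T_j$ is block-diagonal and acts as $0$ on $K_2$), or entirely in $K_2$, or are killed; so the $\limsup$ condition in Definition \ref{def:DefGeneratory}(III) holds with the same bounds coming from $\psi_1$ and $\psi_2$.

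Having produced such a system $(K,\xi_\bullet,T_\bullet,\lambda_\bullet)$ with $T_i$ essentially self-adjoint and the analytic-vector hypotheses of Assumption \ref{assumption:analitic} satisfied, one defines the associated $(\q)$-L\'evy process $\mb{X}$ as in Subsection \ref{Subsec:Processes}. By Proposition \ref{twr:twanaliticconditionallypositive} its moment functional $M(\cdot,1;\mb{X})$ has $\psi$ as generator, i.e. $\Psi(M(\cdot,1;\mb{X}))=\psi=\Psi(\Phi_1\times_{\q}\Phi_2)$; since a moment functional is determined by its cumulant functional (Remark \ref{rem:generatory}, via formula \eqref{MomnetCumulantLevy}), we get $M(\cdot,1;\mb{X})=\Phi_1\times_{\q}\Phi_2$. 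As $M(\cdot,\I;\mb{X})=\state(\mb{X}^{\underline u(1)}(\I)\cdots\mb{X}^{\underline u(n)}(\I))$ is manifestly a positive, unital functional (it is the vacuum state on the $\ast$-algebra of quadrabasic operators), the product functional is a state.

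The main obstacle I expect is the conditional-positivity step, specifically checking that the direct-sum construction really reproduces the cumulants of $\Psi(\Phi_1\times_{\q}\Phi_2)$ on \emph{all} monomials — including correctly getting $0$ on mixed ones — and that this is enough to conclude positivity of the full quadratic form $f\mapsto\psi(f^\ast f)$ rather than merely on pure monomials; this is where one must lean on the GNS/Fock realization (as in Anshelevich) rather than on a bare computation with cumulants. Everything else (the recursive unwinding of $\Psi$, analyticity, and the passage from $\ID$-membership to ``state'') is routine given the results already established in the paper.
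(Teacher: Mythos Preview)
Your proposal is correct and follows essentially the same route as the paper: both realize $\psi_1=\Psi(\Phi_1)$ and $\psi_2=\Psi(\Phi_2)$ via the GNS/Fock data coming from Proposition~\ref{twr:twanaliticconditionallypositive}, form the direct sum $K_1\oplus K_2$ with the block-diagonal operators $\bigl(\begin{smallmatrix}T^{(1)}_i&0\\0&0\end{smallmatrix}\bigr)$, $\bigl(\begin{smallmatrix}0&0\\0&T^{(2)}_j\end{smallmatrix}\bigr)$ and the embedded vectors, observe that mixed cumulants vanish while the pure ones are unchanged, and conclude $\Phi_1\times_{\q}\Phi_2\in\ID(k+l)$. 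The only cosmetic difference is that the paper phrases the conclusion directly through the first description of $\ID$ (exhibiting the process and reading off $\state(A_1\cdots A_{k+l})=\Phi_1\times_{\q}\Phi_2(a_1\cdots a_{k+l})$), whereas you route it through the second description (checking conditional positivity and analyticity of $\psi$); these are equivalent by Proposition~\ref{twr:twanaliticconditionallypositive}.
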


\begin{proof}  The proof follows by direct construction.
From Proposition \ref{twr:twanaliticconditionallypositive} we know that there exist processes $\mb X^{(i, 1)}(\I) $ and $\mb Y^{(i, 2)}(\I)$ which may be identified with the $(\q)$-L\'{e}vy processes whose distributions at time $1$ are $\Phi_1\in\ID(k) $ on $\mf{C} \langle x_1, \ldots, x_{k} \rangle$   and  $ \Phi_2\in \ID(l)$ on $\mf{C} \langle y_1,\ldots, y_{l} \rangle$,  respectively. We will explain that we can choose these processes in such a way that the product functional conditions are met. 
Let   $\xi_{i,1}\O\eta \in V_1 \O\bar H $ and  $T_{i,1}\O \id$ is an operator on $V_1\O\bar H $ with domain $\mc{D}_1=D_1\O\bar H$. Similarly $\xi_{i,2}\O\eta \in V_2\O\bar H $ and  $T_{i,2}\O \id$ is an operator on $V_2\O\bar H$ with domain $\mc{D}_2=D_2\O\bar H$. We   identify 
\begin{align*}
&\text{$\xi_{i,1}\O\eta$  with $(\xi_{i,1} \oplus 0) \O\eta $,}
\\ & \text{$\xi_{i,2}\O\eta $   with $(0 \oplus \xi_{i,2})\O\eta$,}
\\& \text{$T_{i,1}\O \id$  with $\bigl( \begin{smallmatrix} T_{i,1} & 0 \\ 0 & 0 \end{smallmatrix} \bigr)\O\id $,}
\\& \text{$T_{i,2}\O \id$ with $\bigl( \begin{smallmatrix} 0 & 0 \\ 0 & T_{i,2} \end{smallmatrix} \bigr)\O\id $.}
\end{align*}
 Let $V = (V_1 \oplus V_2)\O\bar H$ and $\mb{X} ^{(i, 1)}(\I) = p_\I(\xi_{i, 1}\O\eta, T_{i,1}, \lambda_{i,1})$, $\mb Y^{(i, 2 )}(\I)= p_\I(\xi_{i,2}\O\eta, T_{i,2}, \lambda_{i,2})$. 
 By Definition \ref{def:cumulantyLevego} we know
that  this identification does not change the mixed  cumulants of  $\mb X^{(i, 1)}(\I) $ and $\mb Y^{(i, 2)}(\I)$. 
From this identification it follows that
$$\state(A_1 \ldots A_{k+l})=\Phi_1 \times_{\q} \Phi_2 (a_1 \ldots a_{k+l}),$$
for all $A_i\in \{\mb X^{(1,1)}, \ldots, \mb X^{(k,1)}, \mb Y^{(1,2)}, \ldots, \mb Y^{(l,2)}\}$ and $$a_i=\begin{cases}
x_i & \text{ if } A_i\in \{\mb X^{(1,1)}, \ldots, \mb X^{(k,1)}\}\\
y_i & \text{ if } A_i\in\{\mb Y^{(1,2)}, \ldots, \mb Y^{(l,2)}\} 
\end{cases}.$$ Thus we prove that 
 $$\Phi_1 \times_{\q} \Phi_2 \in \ID( k + l).$$
\end{proof}

\subsubsection{$(\q)$-convolution and the L\'{e}vy-Hinchin representation.}
Now we consider the $(\q)$-L\'{e}vy processes in the simplest case of one-dimensional $K$.
We use the following notations in this subsection:
\begin{enumerate}
\item $\mc{M}$ denotes the space of finite positive Borel measures on $\mf{R}$;
\item $\mc{M}_P \subset \mc{M}$ denotes the subset of probability measures;
\item For $\mu \in \mc{M}_P$ and $n \geq 1$, we define a cumulant $r_n(\mu)$ by
\begin{align}
\label{MC}
r_n(\mu) = m_n(\mu) -\sum_{\substack{\pi \in \PB(n) \\ \pi \neq \hat{1}_{n}\O\hat{1}_{\bar n}}} q^{\rc(\pi|_{[n]})}t^{\nest(\pi|_{[n]})}\v^{\rc(\pi|_{[\bar n]})}\w^{\nest(\pi|_{[\bar n]})} \prod_{B \in \pi|_{[n]}}  r_{\abs{B}}.
\end{align}
\item Let  $\tau \in \mc{M}_u \subset \mc{M}$ such that   $$\limsup_{n \rightarrow \infty} \frac{1}{ \sqrt[n]{n!}} m_{2n}^{1/2n}(\tau) < \infty.$$  This condition means  that a measure in  $\mc{M}_u $ is  uniquely determined by its moments.
Indeed 
Carleman's theorem for moments (see \cite{Akhiezer}) states that the moment problem is determined if 
the following condition holds: $$\sum_{n\geq 0} m_{2n}^{-\frac{1}{2n}}(\tau)=\infty. $$
In our case $m_{2n}^{-1/2n}(\tau)\geq  \frac{C}{ \sqrt[n]{n!}}$ for some $C$, so the conclusion follows.
Equivalently, under our assumption $\tau \in \mc{M}_u \iff $  
there exists the operator $T$ which has the distribution $\tau$, with
respect to the vector functional $\ip{\xi}{\cdot \xi}_H$ and moreover, $\xi$ is an analytic vector for $T$. 
\end{enumerate}

\begin{definition}
\begin{enumerate}[I.]
\item For  $\tau \in \mc{M}_u$ such that the operator $T$ has the distribution $\tau$ with respect to the vector functional $\ip{\xi}{\cdot \xi}_H$ and $\lambda \in \mf{R}$ we define an injective map 
\begin{align*}
\IF:  \mf{R} \times \mc{M}_u &\rightarrow \mc{M}_P;
\\
(\lambda, \tau)&\mapsto \mu \quad \text{such that $\mu$ is the distribution  of  $p_1(\xi\O\eta, T, \lambda)$};
\end{align*}
Let $\IF_{\q}(\ast):=\{\IF(\lambda, \tau)\mid (\lambda, \tau)\in  \mf{R} \times \mc{M}_u\}$. We define the analog of the L\'{e}vy-Hinchin representation $\Rtr_{\q}: \IF_{\q}(\ast) \rightarrow \mf{R} \times \mc{M}_u$ to be the inverse of $\IF(\lambda, \tau)$. 
\item 
For $\mu, \nu \in \IF_{\q}(\ast)$, define their $(\q)$-convolution $\mu \ast_{\q} \nu$ by the rule that $$\Rtr_{\q}(\mu \ast_{\q} \nu) = \Rtr_{\q}(\mu) + \Rtr_{\q}(\nu).$$ 
\end{enumerate}
\end{definition}
\begin{corollary} 
The $(\q)$-convolution of two positive measures is positive, i.e. $$\IF(\lambda_1, \tau_1) \ast_{\q} \IF(\lambda_2, \tau_2) = \IF(\lambda_1 + \lambda_2, \tau_1 + \tau_2).$$
\end{corollary}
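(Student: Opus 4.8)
The plan is to reduce the identity entirely to unwinding the definitions, the one genuinely substantive point being that the determinacy class $\mc{M}_u$ is stable under addition of measures. Recall that $\ast_{\q}$ was defined through $\Rtr_{\q}(\mu \ast_{\q} \nu) = \Rtr_{\q}(\mu) + \Rtr_{\q}(\nu)$, where $\Rtr_{\q}$ is the inverse of the injective map $\IF : \mf{R} \times \mc{M}_u \to \mc{M}_P$; equivalently $\mu \ast_{\q} \nu = \IF\big(\Rtr_{\q}(\mu) + \Rtr_{\q}(\nu)\big)$, which is meaningful exactly when the argument lies in the domain $\mf{R}\times\mc{M}_u$ of $\IF$. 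For $\mu = \IF(\lambda_1,\tau_1)$ and $\nu = \IF(\lambda_2,\tau_2)$ one has $\Rtr_{\q}(\mu) = (\lambda_1,\tau_1)$ and $\Rtr_{\q}(\nu) = (\lambda_2,\tau_2)$, so this argument is $(\lambda_1+\lambda_2,\tau_1+\tau_2)$, and the identity to be proved becomes the statement that $(\lambda_1+\lambda_2,\tau_1+\tau_2)$ is a legitimate input for $\IF$, i.e. $\tau_1+\tau_2 \in \mc{M}_u$, together with the trivial observation that $\IF$ takes values in $\mc{M}_P$.

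The first thing I would therefore verify is that $\tau_1+\tau_2 \in \mc{M}_u$. Since $m_{2n}(\tau_1+\tau_2) = m_{2n}(\tau_1) + m_{2n}(\tau_2) \le 2\max\{m_{2n}(\tau_1), m_{2n}(\tau_2)\}$ and $2^{1/2n}\to 1$, one gets
\[
\limsup_{n\to\infty}\frac{1}{\sqrt[n]{n!}}\, m_{2n}^{1/2n}(\tau_1+\tau_2)\ \le\ \max_{i=1,2}\ \limsup_{n\to\infty}\frac{1}{\sqrt[n]{n!}}\, m_{2n}^{1/2n}(\tau_i)\ <\ \infty,
\]
so $\tau_1+\tau_2\in\mc{M}_u$. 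Consequently $\IF(\lambda_1+\lambda_2,\tau_1+\tau_2)$ is a well-defined element of $\mc{M}_P$; in particular it is a positive (indeed probability) measure, which is exactly the assertion, and the displayed equality $\IF(\lambda_1,\tau_1)\ast_{\q}\IF(\lambda_2,\tau_2) = \IF(\lambda_1+\lambda_2,\tau_1+\tau_2)$ follows.

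To make this concrete — and to see that $\ast_{\q}$ deserves to be called a convolution, in harmony with Proposition \ref{Lem:Product} — I would realize each $\tau_i$ by an essentially self-adjoint $T_i$ on a Hilbert space $K_i$ with $\ip{\xi_i}{\cdot\,\xi_i}_H$-distribution $\tau_i$ and $\xi_i$ analytic for $T_i$, and then, exactly as in the direct-sum identification in the proof of Proposition \ref{Lem:Product}, pass to $K_1\oplus K_2$ with $\xi := \xi_1\oplus\xi_2$, $T := T_1\oplus T_2$ and $\lambda := \lambda_1+\lambda_2$. Since $\ip{\xi}{T^n\xi}_H = m_n(\tau_1)+m_n(\tau_2) = m_n(\tau_1+\tau_2)$, the operator $T$ has $\ip{\xi}{\cdot\,\xi}_H$-distribution $\tau_1+\tau_2$, so the distribution of $p_1(\xi\O\eta, T, \lambda)$ is precisely $\IF(\lambda_1+\lambda_2,\tau_1+\tau_2)$; moreover, by the proof of Proposition \ref{Lem:Product} this process is the sum of the two $(\q)$-Lévy processes attached to $\mu$ and $\nu$ with all mixed cumulants vanishing, which via Definition \ref{def:cumulantyLevego} and Remark \ref{rem:generatory} recovers the additivity $r_n(\IF(\lambda_1+\lambda_2,\tau_1+\tau_2)) = r_n(\mu) + r_n(\nu)$ demanded by the definition of $\ast_{\q}$. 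The only step where anything can fail is the stability of $\mc{M}_u$ under addition, and that is the one computation I would actually write out in full; everything else is unwinding the definitions of $\IF$, $\Rtr_{\q}$ and $\ast_{\q}$.
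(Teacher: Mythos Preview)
Your proposal is correct and follows exactly the approach the paper intends: the corollary is stated without proof because it is meant to be immediate from the definitions of $\IF$, $\Rtr_{\q}$ and $\ast_{\q}$, and your argument simply unwinds those definitions. You are in fact more careful than the paper, since you explicitly check that $\tau_1+\tau_2\in\mc{M}_u$ (the only point where well-definedness could fail), whereas the paper leaves this implicit; your moment estimate for that step is clean and correct, and the optional direct-sum realization you sketch is precisely the construction from Proposition~\ref{Lem:Product}.
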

Finally, we present the relation between the convolution of measures and  product states. 
\begin{proposition}
For $\mu_1, \mu_2 \in \IF_{\q}(\ast)$, and $\Phi_1, \Phi_2 \in  \ID(1)$ we have 
\[
(\mu_1 \ast_{\q} \mu_2)(x^n) = (\Phi_1 \times_{\q} \Phi_2)((x_1 + x_2)^n).
\]
\end{proposition}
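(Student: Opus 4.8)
The plan is to realise $\mu_1\ast_{\q}\mu_2$ concretely as the distribution at time $1$ of the \emph{sum} of the two coordinate processes underlying $\Phi_1\times_{\q}\Phi_2$, and then to match moments by expanding $(x_1+x_2)^n$. For the statement to be meaningful one takes $\Phi_i$ to be the moment functional of $\mu_i$, i.e.\ $\Phi_i(x^n)=m_n(\mu_i)$; concretely, writing $(\lambda_i,\tau_i)=\Rtr_{\q}(\mu_i)$, pick an operator $T_i$ on a Hilbert space $V_i$ with analytic cyclic vector $\xi_i$ realising $\tau_i$ (so that $m_k(\tau_i)=\ip{\xi_i}{T_i^{k}\xi_i}_{V_i}$), and set $\mb{X}^{(i)}(\I)=p_{\I}(\xi_i\O\eta,T_i,\lambda_i)$, so that $\Phi_i=M(\cdot,1;\mb{X}^{(i)})$ and $\mu_i=\IF(\lambda_i,\tau_i)$.

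First I would invoke Proposition~\ref{Lem:Product} and, more precisely, the explicit construction carried out in its proof: after passing to $(V_1\oplus V_2)\O\bar H$ and identifying $\xi_1,\xi_2,T_1,T_2$ with their corner embeddings into $V_1\oplus V_2$, one obtains a single two-variable $(\q)$-L\'evy process whose coordinate processes $\mb{X}^{(1)},\mb{X}^{(2)}$ still have distributions $\mu_1,\mu_2$ and satisfy $\state(A_1\cdots A_n)=(\Phi_1\times_{\q}\Phi_2)(a_1\cdots a_n)$ whenever $A_j\in\{\mb{X}^{(1)}(1),\mb{X}^{(2)}(1)\}$ and $a_j\in\{x_1,x_2\}$ is the matching variable.

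The key step is to show that the sum process $I\mapsto\mb{X}^{(1)}(I)+\mb{X}^{(2)}(I)$ is again of the standard form $p_I(\,\cdot\,,\,\cdot\,,\,\cdot\,)$. This rests on the additivity of each building block of $p_I$ in its relevant argument: $v\mapsto\B^{\ast}_{(\chf{I}\otimes v)\O\eta}$ is linear (see the remark following the definition of $\B^{\ast}$), $v\mapsto\B_{(\chf{I}\otimes v)\O\eta}$ is additive (it is anti-linear), $S\mapsto p_I(S)=p_{(\chf{I}\otimes S)\O\id}$ is linear (since $p_0(\cdot)$ is, and $\bar H$ carries the fixed operator $\id$ throughout this section), and the drift terms add. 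Hence
\[
\mb{X}^{(1)}(I)+\mb{X}^{(2)}(I)=p_I\big((\xi_1\oplus\xi_2)\O\eta,\ T_1\oplus T_2,\ \lambda_1+\lambda_2\big).
\]
Because $\ip{\xi_1\oplus\xi_2}{(T_1\oplus T_2)^{k}(\xi_1\oplus\xi_2)}=m_k(\tau_1)+m_k(\tau_2)=m_k(\tau_1+\tau_2)$ and $\tau_1+\tau_2\in\mc{M}_u$ (the analyticity/uniqueness condition is stable under sums), $T_1\oplus T_2$ has distribution $\tau_1+\tau_2$ against $\xi_1\oplus\xi_2$; so the sum process is of the form $p_I(\xi\O\eta,T,\lambda_1+\lambda_2)$ with $T\sim\tau_1+\tau_2$, and its time-$1$ distribution is $\IF(\lambda_1+\lambda_2,\tau_1+\tau_2)$, which by the Corollary immediately preceding this proposition equals $\mu_1\ast_{\q}\mu_2$.

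To finish I would expand the monomial and use the identification above:
\begin{align*}
(\Phi_1\times_{\q}\Phi_2)\big((x_1+x_2)^n\big)
&=\sum_{\underline{u}\in\{1,2\}^n}(\Phi_1\times_{\q}\Phi_2)(x_{\underline{u}(1)}\cdots x_{\underline{u}(n)})\\
&=\sum_{\underline{u}\in\{1,2\}^n}\state\big(\mb{X}^{(\underline{u}(1))}(1)\cdots \mb{X}^{(\underline{u}(n))}(1)\big)
=\state\big((\mb{X}^{(1)}(1)+\mb{X}^{(2)}(1))^n\big),
\end{align*}
and the right-hand side is $m_n(\mu_1\ast_{\q}\mu_2)=(\mu_1\ast_{\q}\mu_2)(x^n)$, which is the asserted identity. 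The main obstacle is exactly the bookkeeping in the displayed ``sum of processes'' identity: one must carefully check that $p_I(\,\cdot\,,\,\cdot\,,\,\cdot\,)$ is additive in all three slots under the corner embeddings, and that the direct-sum operator $T_1\oplus T_2$ indeed realises $\tau_1+\tau_2$ while keeping $\xi_1\oplus\xi_2$ analytic; once this is in place, the remaining steps are direct appeals to Proposition~\ref{Lem:Product} and to the preceding Corollary, together with the routine expansion above.
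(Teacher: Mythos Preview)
Your proof is correct and follows essentially the same strategy as the paper: realise both processes on the common space $(V_1\oplus V_2)\O\bar H$ via the construction in Proposition~\ref{Lem:Product}, form the sum $\mb Z=\mb X^{(1)}+\mb Y^{(2)}$, and identify its time-$1$ distribution with $\mu_1\ast_{\q}\mu_2$. The only difference is cosmetic: the paper simply asserts $r_n(\mu_{\mb Z})=r_n(\mu_{\mb X^{(1)}})+r_n(\mu_{\mb Y^{(2)}})$ (mixed cumulants vanish by the block-diagonal embedding) and invokes the moment--cumulant relation, whereas you make the stronger explicit identification $\mb Z(I)=p_I\big((\xi_1\oplus\xi_2)\O\eta,\,T_1\oplus T_2,\,\lambda_1+\lambda_2\big)$ and then read off the L\'evy--Hinchin data via the preceding Corollary; since $R_1=\lambda$ and $R_n=\ip{\xi}{T^{n-2}\xi}=m_{n-2}(\tau)$ for $n\ge 2$, these two ways of saying ``the L\'evy--Hinchin data add'' are equivalent.
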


\begin{proof}
We use the
representation described in the proof of Proposition \ref{Lem:Product} and  obtain
\[
M(x^n, 1;\mb Z) = M((x_1 + y_1)^n, 1; (\mb X^{(1)}, \mb Y^{(2)})). 
\]
where $\mb Z=\mb X^{(1)}+\mb Y^{(2)}$, because $r_n(\mu_{\mb Z})= r_n(\mu_{\mb X^{(1)}}) + r_n(\mu_{\mb Y^{(2)}})$.
\end{proof}

\begin{corollary}
\begin{enumerate} [(1).]
\item Let $V = \mf{C}$, $\xi= 1 \in V, T = 0$ and $ \lambda = 0$. Then the \emph{$(\q)$-Brownian motion} is the process $X(\I) = p_\I(\xi\O \eta , 0, 0) $. The distribution $\mu$ of $X(\I)$ is the $(\q)$-Gaussian distribution with parameter $\I$, given by $\Rtr_{\q}(\mu)=(0, \I \delta_0)$.
\item  
Let $V = \mf{C}$, $\xi= 1 \in V, T = \id $ and $ \lambda = 1$. The \emph{$(\q)$-Poisson process} is the process $X(\I) = p_\I(\xi\O\eta, \id, \I)$. The distribution $\mu$ of $X(\I)$ is the $(\q)$-Poisson distribution with parameter $\I$, given by $\Rtr_{\q}(\mu)=(\I, \I \delta_1)$.

\end{enumerate}

\end{corollary}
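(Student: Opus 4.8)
The plan is, in each case, to unwind the operator $X(\I)$ to one whose vacuum distribution has already been identified, and then to invoke that $\Rtr_{\q}$ is by definition the inverse of $\IF$; so it suffices to check $\mu=\IF(0,\I\delta_{0})$ in part (1) and $\mu=\IF(\I,\I\delta_{1})$ in part (2) (note $\I\delta_{0},\I\delta_{1}\in\mc{M}_{u}$, since their even moments are bounded).

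For part (1): with $T=0$ the gauge term drops out, $p_{[0,\I)}(0)=p_{(\chf{[0,\I)}\otimes0)\O\id}=0$, and with $\lambda=0$ the scalar term drops out, so $X(\I)=\B_{\zeta}+\B^{\ast}_{\zeta}=\G_{\zeta}$, where $\zeta=(\chf{[0,\I)}\otimes\xi)\O\eta\in\HH_\R$ and $\|\zeta\|^{2}=\|\chf{[0,\I)}\otimes\xi\|_{H}^{2}\,\|\eta\|_{\bar H}^{2}=\I$ (using $\|\xi\|=\|\eta\|=1$). The norm computation in the proof of Theorem \ref{rozkladgaussa}, applied to $\zeta$, gives $\|(\B^{\ast}_{\zeta})^{n}\,\Omega\O\bar\Omega\|_{\q}^{2}=\prod_{k=1}^{n}\|\zeta\|^{2}[k]_{q,t}[k]_{\v,\w}=\prod_{k=1}^{n}\I[k]_{q,t}[k]_{\v,\w}$, so $\mu$ has Jacobi parameters $\beta_{n}=0$ and $\gamma_{n-1}=\I[n]_{q,t}[n]_{\v,\w}$; this is what we call the $(\q)$-Gaussian with parameter $\I$. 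On the other side, $\tau=\I\delta_{0}$ forces any representing pair $(\xi',T)$ to have $\|\xi'\|^{2}=\I\delta_{0}(\mf{R})=\I$ and $T=0$ on the one-dimensional space it generates, so $\IF(0,\I\delta_{0})$ is the law of $p_{1}(\xi'\O\eta,0,0)=\G_{\zeta'}$ with $\zeta'=(\chf{[0,1)}\otimes\xi')\O\eta$, $\|\zeta'\|^{2}=\I$. By the Wick formula \eqref{eq:multipair}, the distribution of $\G_{v}$ for $v=\xi\O\eta$ depends only on $\|v\|$, so $\mu=\IF(0,\I\delta_{0})$ and $\Rtr_{\q}(\mu)=(0,\I\delta_{0})$.

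For part (2): with $T=\id$ and $\lambda=1$ one gets $X(\I)=\B_{\zeta}+\B^{\ast}_{\zeta}+p_{(\chf{[0,\I)}\otimes\id)\O\id}+\I\,(\id\O\id)$, $\zeta=(\chf{[0,\I)}\otimes\xi)\O\eta$; this is exactly the one-dimensional quadrabasic L\'evy process with data $(\xi\O\eta,\id,1)$ at time $\I$, so its moments are given by the moment--cumulant formula \eqref{MomnetCumulantLevy}, and from Definition \ref{def:cumulantyLevego} every $n$-th joint cumulant of $\mu$ equals $\I$: it is $\I\lambda=\I$ for $n=1$, and $\I\langle\xi,\id\cdots\id\,\xi\rangle_{H}=\I\|\xi\|^{2}=\I$ for $n\ge2$. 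Thus $(r_{n}(\mu))_{n\ge1}=(\I,\I,\I,\dots)$, the $(\q)$-Poisson distribution with parameter $\I$. In parallel, $\IF(\I,\I\delta_{1})$ is the law of $p_{1}(\xi'\O\eta,\id,\I)$ with $\|\xi'\|^{2}=\I\delta_{1}(\mf{R})=\I$, whose cumulants are $1\cdot\I=\I$ for $n=1$ and $1\cdot\|\xi'\|^{2}=\I$ for $n\ge2$. Because the moment--cumulant relation \eqref{MC} is a triangular, hence invertible, recursion, equality of the cumulant sequences gives $\mu=\IF(\I,\I\delta_{1})$, i.e.\ $\Rtr_{\q}(\mu)=(\I,\I\delta_{1})$.

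The only point needing care is the bookkeeping of normalizations: the process in the statement is taken at time $\I$ over $[0,\I)$ with a unit vector $\xi$, whereas $\IF$ is evaluated at time $1$ over $[0,1)$ with a representing vector of squared norm $\tau(\mf{R})$, and one has to confirm (as above) that the two presentations produce operators with the same vacuum moments. That identification is the main obstacle; once it is made, the statement is a direct consequence of Theorem \ref{rozkladgaussa}, the Wick formula of Theorem \ref{thm2}, and Definition \ref{def:cumulantyLevego}, with no further estimate needed.
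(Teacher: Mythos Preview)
The paper states this corollary without proof; it is meant to follow immediately from the definitions of $\IF$ and $\Rtr_{\q}$ together with the earlier identification of the Gaussian and Poisson distributions. Your argument is correct and fills in exactly the verification the paper omits: you reduce $X(\I)$ to an operator whose vacuum law is already known (via Theorem~\ref{rozkladgaussa} in part~(1) and the moment--cumulant formula \eqref{MomnetCumulantLevy} in part~(2)), and then match this with $\IF$ evaluated at the claimed pair by checking that the representing data $(\xi',T,\lambda)$ for $\I\delta_{0}$ and $\I\delta_{1}$ produce operators with the same moments. The ``bookkeeping of normalizations'' you flag---time~$\I$ with a unit vector versus time~$1$ with a vector of squared norm~$\I$---is indeed the only nontrivial point, and you handle it correctly by observing that in each case the moments depend only on the resulting cumulant sequence (all zero except $r_{2}=\I$ in~(1); all equal to~$\I$ in~(2)).

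One small remark: in part~(2) the paper writes $X(\I)=p_{\I}(\xi\O\eta,\id,\I)$, which taken literally would make the scalar term $\I^{2}$ rather than $\I$; you correctly interpret the intent as $\lambda=1$ (consistent with the setup line ``$\lambda=1$'') so that the scalar term is $|I|\cdot 1=\I$, and your cumulant computation $r_{1}=\I\lambda=\I$ confirms this reading.
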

\section{Concluding Remark}
Finally, we summarize our conclusions and
contributions and give some perspectives for future research directions.
\begin{enumerate}[(1).]
\item The construction presented in this article can be extended for $\S_n ^k$ with  multipolar Hermite orthogonal polynomial  of the type 
\begin{align*}
&x P_n(x) = P_{n+1}(x) +\underbrace{[n]_{\cdot,\cdot} \dots [n]_{\cdot,\cdot}}_{k \text{ times}} P_{n-1}(x). 
\end{align*}
In these cases combinatorics and partitions are of the same type as those described in Section \ref{sec:kombinatorykapartycje}, except in the limit case because then the measure is not necessarily uniquely determined, for example when $P_n(x) = P_{n+1}(x) +n^3 P_{n-1}(x)$.
\item Let $P_n$ be a family of orthogonal polynomials. One standard combinatorial task is  to calculate the linearization coefficients, when we  are interested in the expectations $\state(P_{n_1}P_{n_2} \dots P_{n_k})$.
The name stems from the fact that these are the coefficients in the expansion of
products of this type in the basis $P_n$, that is, expansions as the sums of orthogonal
polynomials. Many of these coefficients are positive integers, and so they \emph{count something}; see \cite{Anshelevich2005,Anshelevich2018}.
 We expected  that for $t=\w=1$, we can obtain a nice result for the polynomials \eqref{recursion} by using diagonal pair partition because just one crossing plays a role. 
 \item It is worth to find the central limit theorem for the quadrabasic  Gaussian operator as in \cite{BlitvicEjsmont}. Our initial investigation shows that this problem is nontrivial. 
\end{enumerate}
\bibliography{sumcomm}
\bibliographystyle{amsplain}

\end{document}